\newtheorem{thm}{Theorem}[section]
\newtheorem{prop}[thm]{Proposition}
\newtheorem{cor}[thm]{Corollary}
\newtheorem{lem}[thm]{Lemma}
\newtheorem{cl}[thm]{Claim}
\theoremstyle{definition}
\newtheorem{de}[thm]{Definition}
\newtheorem{ex}[thm]{Example}
\newtheorem{rem}[thm]{Remark}
\renewenvironment{proof}{\par\noindent {\em Proof: }}{\hfill$\Box$\medskip}
\theoremstyle{plain}
\newcommand{\cO}{\ensuremath{\mathcal{O}} }
\newcommand{\cA}{\ensuremath{\mathcal{A}} }
\newcommand{\cB}{\ensuremath{\mathcal{B}} }
\newcommand{\cM}{\ensuremath{\mathcal{M}} }
\newcommand{\cN}{\ensuremath{\mathcal{N}} }
\newcommand{\cS}{\ensuremath{\mathcal{S}} }
\newcommand{\cT}{\ensuremath{\mathcal{T}} }
\newcommand{\cU}{\ensuremath{\mathcal{U}} }
\newcommand{\cL}{\ensuremath{\mathcal{L}} }
\newcommand{\fp}{\ensuremath{\mathfrak{p}} }
\newcommand{\abs}{\ensuremath{{|\,.\,|}} }
\newcommand{\ch}{\ensuremath{\textrm{char}} }
\newcommand{\ring}{\ensuremath{\textrm{\upshape{ring}}} }
\newcommand{\gcdi}{\ensuremath{\textrm{gcd}} }
\newcommand{\Th}{\ensuremath{\textrm{Th}} }
\newcommand{\C}{\ensuremath{\mathbb{C}} }
\newcommand{\N}{\ensuremath{\mathbb{N}} }
\newcommand{\Q}{\ensuremath{\mathbb{Q}} }
\newcommand{\R}{\ensuremath{\mathbb{R}} }
\newcommand{\Z}{\ensuremath{\mathbb{Z}} }
\author{Katharina Dupont \\
katharina.dupont@uni-konstanz.de
	\footnote{key words and phrases: valuations; definable valuations; $q$-henselian valued fields; t-henselian topologies\newline
		MSC classes: 03C40 03C60 12J10 12L12} 
}
\title{Definable Valuations Induced by Definable Subgroups}
\begin{document}
\maketitle
\begin{abstract}
In \cite{Ko} Koenigsmann shows that every field that admits a t-hen\-selian topology is either real closed or separably closed or admits a definable valuation inducing the t-henselian topology. To show this Koenigsmann investigates valuation rings induced by certain (definable) subgroups of the field. The aim of this paper, based on the authors PhD thesis \cite{Du2015},  is to look at the methods used in \cite{Ko} in greater detail and correct a mistake in the original paper based on \cite{JaKo2012}.
\end{abstract}

\addsec{Introduction}
In this paper we will show that any non-real closed, non-separably closed field $K$, which admits a t-henselian topology, admits a non-trivial definable valuation (see Theorem~\ref{thmDefValtHenselian}). Our main tool will be to construct valuation rings using  subgroups of $K$. More precisely we will treat simultaneously additive subgroups of $K$ and multiplicative subgroups of $K^\times$.

This paper arose as follows.
Motivated by recent considerations on definable valuations under model theoretic assumptions the author reconsidered in her PhD thesis, \cite{Du2015}, an unpublished preprint of Koenigsmann, see \cite{Ko}.
This paper is mainly a revised version of the preprint. In Proposition~\ref{propDefinableWeakCase}, using \cite{JaKo2012}, we will give an alternative proof for one case in \cite[Theorem~3.1]{Ko} for which the original proof was incorrect. 
Corollary~\ref{corVAxiomsnontrivialdefinable} provides the crucial idea for the model theoretic investigation, which will be pursued in a forthcoming paper, see \cite{DuHaKu2016}.

The research on definable valuations has been very active lately. Recent works include \cite{AnKo2013} and \cite{Fe2013} on the complexity of the formulas defining valuations. In \cite{JaKo2012} conditions are given under which a definable valuation is henselian. Further  \cite{JaKo2014}, \cite{ClDeLeMa2013} and \cite{FePr2015} deal with uniformly definable valuation rings. As well \cite{JaSiWa2015} and \cite{Jo2015} on dp-minimal fields, include sections on definable valuations.

The paper is organized as follows. 
\nopagebreak

We will start with some preliminaries on fractional ideals on valued fields, topologies induced by valuations and absolute values and discrete valuations, that we will refer to later on. 

In Section~2, for every additive or multiplicative subgroup of a field $K$ we will define the valuation ring $\cO_G$ and prove some of its basic properties. 

In Section~3 we will give criteria under which $\cO_G$ is non-trivial.

In Section~4 we will examine under which criteria $\cO_G$ is definable. 

In Section~5 we will bring together the results of the previous two sections for the group of $q$-th powers $\left(K^\times\right)^q$ for $q\neq\ch(K)$ and for the Artin-Schreier group $K^{(p)}$ for 
$p=\ch(K)$. That way in Theorem~\ref{thmphenseliandefinable} we will show  that (under additional assumptions)  if  $K$ admits a non-trivial  $q$-henselian valuation for some prime $q$, then it admits a non-trivial definable valuation. From this we will finally establish Theorem~\ref{thmDefValtHenselian} on t-henselian fields as announced at the beginning of the Introduction.

\textbf{Notation:} In this paper $K$ will always denote a field and $\cO$ a valuation ring on $K$ with $\cM$ its maximal ideal. By $\varrho: K\longrightarrow \cO/\cM=:\overline{K}$ we denote the residue homomorphism. By $v$ we will denote a valuation on $K$ and by $\cO_v:=\left\{x\in K\mid v(x)\geq 0\right\}$ the valuation ring induced by $v$ with maximal ideal $\cM_v$. A valuation will be called discrete, if its value group contains a minimal positive element. Without loss of generality, we shall assume that $\Z$ is a convex subgroup of the value group and hence $1$ is the minimal positive element.

Some of the following definitions and theorems will be slightly different for additive and multiplicative subgroups. Often we will write the differences for multiplicative subgroups in square brackets ``$\left[\ldots\right]$'' if there is no danger of misunderstanding. If we say $G$ is a subgroup of $K$, this can mean either a subgroup of the additive group $(K,+)$ or the multiplicative group $(K^\times,\cdot)$, unless explicitly otherwise noted. We will say $G$ is a proper subgroup of $K$ if $G\subsetneq K$ $\left[\textrm{resp. }G\subsetneq K^\times\right]$.

I want to thank Franziska Janke for pointing out the mistake in \cite{Ko} as well as for several helpful discussions and comments on an early version of this work. Further I want to thank Salma Kuhlmann and Assaf Hasson for great support and helpful advice while I was conducting the research as well as while I was writing the paper.

\section{Preliminaries}

The following can be shown by simple calculation. 
\begin{rem}\label{remfracIdiff}
	Let $v:K\twoheadrightarrow \Gamma\cup \left\{\infty\right\}$ be a valuation. Let $\{0\}\subsetneq \cA\subsetneq K$.
	\begin{enumerate}[(a)]
		\item $\cA$ is a fractional ideal of $\cO_v$ if and only if for every  $x\in K$, if there exists $a\in \cA$ such that $v\left(x\right)\geq v\left(a\right)$, then $x\in \cA$. 
		\item The fractional ideals of $\cO_v$ are linearly ordered, i.e. if $\cA_1$ and $\cA_2$ are fractional ideals of $\cO_v$ then $\cA_1\subseteq \cA_2$ or $\cA_2\subseteq \cA_1$.  
		\item 	Let $\cA\subsetneq\cO_v$.
		$\cA$ is a prime ideal of $\cO_v$ if and only if for every  $x\in \cO_v$, if there exists $a\in \cA$ and an $n\in \N$ with $n\cdot v\left(x\right)\geq v\left(a\right)$, we have $x\in \cA$.
	\end{enumerate}
\end{rem}

\begin{lem}\label{lemMsubsetneqintersection}
	Let $\cO_2\subsetneq \cO_1$ be two valuation rings on $K$ with maximal ideals $\cM_1$ and $\cM_2$. Let $\cA$ be an $\cO_2$-ideal with $\sqrt{\cA}=\cM_2$. 
	Then $\cM_1\subsetneq \cA$.
\end{lem}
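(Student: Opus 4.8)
The plan is to translate everything into valuation-theoretic terms. Fix a valuation $v_2$ with valuation ring $\cO_2$ and value group $\Gamma_2$ (written additively). Since $\cO_2\subsetneq\cO_1$, by the standard description of the overrings of a valuation ring, $\cO_1$ corresponds to a convex subgroup $\Delta$ of $\Gamma_2$; in particular $\cM_1\subseteq\cO_2$ and
$$\cM_1=\left\{x\in K:v_2(x)>\delta\text{ for all }\delta\in\Delta\right\},\qquad \cM_2=\left\{x\in K:v_2(x)>0\right\},$$
and the strictness $\cO_2\subsetneq\cO_1$ is exactly the statement $\Delta\neq\{0\}$. Note also that $\sqrt{\cA}=\cM_2\subsetneq\cO_2$ forces $\cA$ to be a nonzero proper $\cO_2$-ideal, so by Remark~\ref{remfracIdiff}(a) it is upward closed in $v_2$-value (equivalently, $x\in\cA$ whenever $v_2(x)\geq v_2(a)$ for some $a\in\cA$).

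The key step is to exploit $\sqrt{\cA}=\cM_2$ to produce an element of $\cA$ whose value lies \emph{inside} $\Delta$. Since $\Delta\neq\{0\}$ I may choose $\delta_0\in\Delta$ with $\delta_0>0$, and then $t\in K^\times$ with $v_2(t)=\delta_0$ (using that $v_2$ is onto $\Gamma_2$). Then $v_2(t)>0$, so $t\in\cM_2=\sqrt{\cA}$, hence $t^n\in\cA$ for some $n\in\N$; and $v_2(t^n)=n\delta_0\in\Delta$ since $\Delta$ is a subgroup.

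Given this, $\cM_1\subseteq\cA$ follows at once: for $x\in\cM_1$ with $x\neq 0$ we have $v_2(x)>\delta$ for all $\delta\in\Delta$, in particular $v_2(x)>n\delta_0=v_2(t^n)$; since $\cA$ is upward closed in $v_2$-value and $t^n\in\cA$, this gives $x\in\cA$ (and $0\in\cA$ trivially). For strictness, the element $t^n$ separates the two ideals: $t^n\in\cA$, but $v_2(t^n)=n\delta_0\in\Delta$, so it is false that $v_2(t^n)>\delta$ for every $\delta\in\Delta$ (take $\delta=n\delta_0$), i.e. $t^n\notin\cM_1$. Hence $\cM_1\subsetneq\cA$.

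I do not anticipate a genuine obstacle; the argument is bookkeeping with the convex subgroup $\Delta$. The only points requiring care are (i) that $\Delta$ is truly nontrivial — this is where, and the only place where, $\cO_2\subsetneq\cO_1$ (rather than merely $\cO_2\subseteq\cO_1$) is used, and indeed the statement fails when $\cO_1=\cO_2$ — and (ii) keeping straight the difference between a value \emph{belonging to} $\Delta$ and a value being \emph{cofinal above} $\Delta$, which is exactly what keeps the element $t^n\in\cA$ out of $\cM_1$.
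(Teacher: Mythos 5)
Your argument is correct, but it takes a genuinely different route from the paper. The paper argues by contradiction in two lines: if $\cA\subseteq\cM_1$, then taking radicals gives $\cM_2=\sqrt{\cA}\subseteq\cM_1$ (since $\cM_1$ is a prime ideal of $\cO_1\supseteq\cO_2$), which is impossible because $\cO_2\subsetneq\cO_1$ forces $\cM_1\subsetneq\cM_2$; the conclusion $\cM_1\subsetneq\cA$ then follows from the linear ordering of the fractional ideals of $\cO_2$ (Remark~\ref{remfracIdiff}~(b)). You instead work directly in the value group, invoking the correspondence between coarsenings of $\cO_2$ and convex subgroups $\Delta$ of $\Gamma_2$, and you produce an explicit witness $t^n\in\cA\setminus\cM_1$ with $v_2(t^n)\in\Delta$. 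Your approach is more constructive and makes visible exactly where the strictness $\cO_2\subsetneq\cO_1$ enters (namely $\Delta\neq\{0\}$), at the cost of importing the coarsening--convex-subgroup dictionary, which the paper never needs here; the paper's proof is shorter and stays entirely at the level of ideals, but hides the separating element inside the appeal to linear ordering. Both the containment and the strictness are established correctly in your version, so there is nothing to fix.
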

\begin{proof}
	Suppose $\cA\subseteq \cM_1$. Then $\cM_2=\sqrt{\cA}\subseteq \cM_1$. But this contradicts $\cO_2\subsetneq\cO_1$. 
	Hence $\cM_1\subsetneq \cA$ by Remark~\ref{remfracIdiff}~(b).
\end{proof}

\begin{lem}\label{lemInverseOf1plusIdeal}
	Let $\cO$ be a valuation ring and $\cA$ an $\cO$-ideal.
	Then $\left(1+\cA\right)$ is a multiplicative subgroup of $\cO^\times$. 
\end{lem}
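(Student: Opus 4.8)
The plan is to verify the subgroup axioms directly, using that a valuation ring $\cO$ is local with maximal ideal $\cM$, that $\cO\setminus\cM=\cO^\times$, and that every proper ideal of $\cO$ is contained in $\cM$ (by the linear ordering of ideals, cf. Remark~\ref{remfracIdiff}~(b)).

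First I would assume, as we may, that $\cA$ is a proper ideal, so that $\cA\subseteq\cM$; without this the assertion $1+\cA\subseteq\cO^\times$ cannot hold. Then, for $a\in\cA$, the element $1+a$ lies in $\cO$ and cannot lie in $\cM$: otherwise $1=(1+a)-a\in\cM$, a contradiction. Since $\cO$ is local, this forces $1+a\in\cO^\times$, giving $1+\cA\subseteq\cO^\times$. The neutral element is accounted for by $1=1+0\in 1+\cA$, and closure under multiplication follows from the identity $(1+a)(1+b)=1+(a+b+ab)$ together with $a+b+ab\in\cA$, which holds because $\cA$ is an ideal.

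It remains to handle inverses. Given $a\in\cA$, the element $(1+a)^{-1}$ exists and lies in $\cO$ by the first step, and from the identity $(1+a)^{-1}-1=-a\,(1+a)^{-1}$ we read off $(1+a)^{-1}-1\in\cA$, since $a\in\cA$, $(1+a)^{-1}\in\cO$, and $\cA$ is an ideal. Hence $(1+a)^{-1}\in 1+\cA$. None of these steps presents a genuine difficulty; the only point demanding a little care is to make explicit the reduction to $\cA\subseteq\cM$, which is exactly what makes ``$1+a$ is a unit'' legitimate.
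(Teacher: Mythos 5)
Your proof is correct and follows essentially the same route as the paper's: establish $1+\cA\subseteq\cO^\times$, get closure under multiplication from $(1+a)(1+b)=1+(a+b+ab)$, and get inverses from $(1+a)^{-1}=1-a(1+a)^{-1}\in 1+\cA$. You merely spell out the step the paper declares ``clear'' (that $1+a$ is a unit, via $\cA\subseteq\cM$ and locality of $\cO$), which is a reasonable amount of extra care.
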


\begin{proof}
	It is clear that $1+\cA\subseteq \cO^\times$.
	Let $a\in \cA$. 
	Then $1+a\in \cO^\times$. Hence $\left(1+a\right){-1}\in \cO^\times$. Therefore $a\cdot\left(1+a\right){-1}\in \cA$ and hence  $\left(1+a\right){-1}=1+a\cdot\left(1+a\right){-1}\in 1+\cA$.
	Further for $a,b \in \cA$ we have  $\left(1+a\right)\cdot\left(1+b\right)= 1+a+b+ a\cdot b\in 1+\cA$. 
\end{proof}

\begin{lem}\label{lemFSumProdIdeals}
	Let $v_1,\,v_2$ be independent valuations on  $K$. 
	Let $\cA_1$ be a non-trivial $\cO_{v_1}$-ideal and $\cA_2$ a non-trivial
	$\cO_{v_2}$-ideal.
	Then $K=\cA_1+\cA_2$ 
	and $K^{\times}=\left(1+\cA_1 \right)\cdot\left(1+\cA_2\right)$.
\end{lem}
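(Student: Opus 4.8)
The statement is the classical Approximation/Independence Lemma in the form needed here: for independent valuations $v_1, v_2$, proper ideals already add up to everything (additively) and $1$ plus proper ideals already multiply up to everything (multiplicatively). The plan is to recall that independence of $v_1$ and $v_2$ means the smallest common coarsening is the trivial valuation, equivalently $\cO_{v_1}\cdot\cO_{v_2}=K$; in particular $\cO_{v_1}\not\subseteq\cO_{v_2}$ and $\cO_{v_2}\not\subseteq\cO_{v_1}$. From this one extracts an element $t\in K$ with $v_1(t)\geq 0$ but $v_2(t)<0$ (so $t\in\cO_{v_1}$, $t^{-1}\in\cM_{v_2}$), and symmetrically $s\in K$ with $v_2(s)\geq 0$, $v_1(s)<0$. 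The key point I expect to lean on is the strong approximation statement: given $a_1\in K$, $a_2\in K$ and $\gamma_1\in\Gamma_1$, $\gamma_2\in\Gamma_2$, there is $x\in K$ with $v_1(x-a_1)\geq\gamma_1$ and $v_2(x-a_2)\geq\gamma_2$ — but here I will avoid invoking it as a black box and instead build the needed elements by hand from $s$ and $t$, since that is cleaner and self-contained.

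\textbf{Additive statement.} Pick $0\neq a_1\in\cA_1$ and $0\neq a_2\in\cA_2$; by Remark~\ref{remfracIdiff}~(a) it suffices, given an arbitrary $x\in K$, to write $x=y_1+y_2$ with $v_1(y_1)\geq v_1(a_1)$ and $v_2(y_2)\geq v_2(a_2)$. Using the element $t$ above (with $v_1(t)\geq 0$, $v_2(t^{-1})>0$, so $v_2(t)<0$) and replacing it by a suitable power $t^n$, I can arrange $v_1(x t^n)\geq v_1(a_1)$ — wait, that pushes $v_1$ up only if $v_1(t)>0$; to be safe I choose $t$ with $v_1(t)>0$ and $v_2(t)\leq 0$ (possible since $\cO_{v_1}\ne\cO_{v_2}$, after possibly passing to the nontrivial coarsening being trivial one gets $v_1,v_2$ incomparable so such $t$ exists, or take products of the two generating elements). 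Then $u:=\dfrac{t^n}{1+t^n}$ satisfies $v_1(u)\geq n\,v_1(t)$, which is large, while $v_2(1-u)=v_2\!\left(\dfrac{1}{1+t^n}\right)\geq 0$ grows like $-n\,v_2(t)$ once $v_2(t)<0$; so $u\to 0$ in $v_1$ and $1-u\to 0$ in $v_2$. Setting $y_1:=u x$ and $y_2:=(1-u)x$ gives $x=y_1+y_2$ with both valuation conditions met for $n$ large enough, proving $K=\cA_1+\cA_2$.

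\textbf{Multiplicative statement.} Here I want to write an arbitrary $x\in K^\times$ as $(1+b_1)(1+b_2)$ with $b_i\in\cA_i$. Expanding, $(1+b_1)(1+b_2)=1+b_1+b_2+b_1b_2$, so I need $b_1+b_2+b_1b_2=x-1$, i.e.\ $(1+b_1)(1+b_2)=x$. Using the same kind of idempotent-like element $u$ as above with $u$ close to $1$ in $v_1$ and close to $0$ in $v_2$, set $1+b_1$ to be a good $v_1$-approximation of $x$ that is a $v_1$-unit congruent to $1$ (namely $1+b_1 := 1 + u(x-1)$, which lies in $1+\cA_1$ when $v_1(u)$ is large enough relative to the generator of $\cA_1$, using Lemma~\ref{lemInverseOf1plusIdeal} and Remark~\ref{remfracIdiff}~(a)), and then let $1+b_2 := x\,(1+b_1)^{-1}$; one checks $v_2(1+b_2-1)=v_2\!\left(\dfrac{x-(1+b_1)}{1+b_1}\right)$ is large because $1+b_1 = 1+u(x-1)$ agrees with $x$ up to the factor $(1-u)$, which is $v_2$-small, so $b_2\in\cA_2$ by Remark~\ref{remfracIdiff}~(a). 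Since $1+b_1\in 1+\cA_1$ is invertible (Lemma~\ref{lemInverseOf1plusIdeal}) and $x(1+b_1)^{-1}\in 1+\cA_2$, we get $x\in(1+\cA_1)(1+\cA_2)$, hence $K^\times=(1+\cA_1)(1+\cA_2)$.

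\textbf{Main obstacle.} The only real subtlety is extracting, from independence of $v_1$ and $v_2$, an element $t$ with prescribed sign of $v_1(t)$ and $v_2(t)$ simultaneously — i.e.\ that $\cO_{v_1}$ and $\cO_{v_2}$ are $\subseteq$-incomparable and that one can find $t$ with $v_1(t)>0$, $v_2(t)<0$. This is where I would be careful: independence gives $\cO_{v_1}\cdot\cO_{v_2}=K$, so some $t\in K$ has $t\in\cO_{v_1}$, $t\notin\cO_{v_2}$, giving $v_1(t)\ge 0$, $v_2(t)<0$; if $v_1(t)=0$, multiply by an element $s$ with $v_1(s)>0$ chosen so that $v_2(s)$ is not too positive, or simply replace $t$ by $t\cdot a_1$ for the chosen generator $a_1\in\cA_1$ — a finite bookkeeping of cases. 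Once such $t$ (and its mirror image) are in hand, everything else is the routine approximation bookkeeping sketched above, controlled by taking $n$ large.
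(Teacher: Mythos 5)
Your proof is correct, but it takes a genuinely different route from the paper. The paper simply invokes the Approximation Theorem \cite[Theorem~2.4.1]{EnPr2005} to get $\left(b_1+c_1\cdot \cA_1\right)\cap\left(b_2+c_2\cdot\cA_2\right)\neq\emptyset$ and then reads off both identities by choosing the data cleverly: $b_1=x$, $c_1=-1$, $b_2=0$, $c_2=1$ gives $x=a_1+a_2$, while $b_1=c_1=x$, $b_2=c_2=1$ gives $x+x\cdot a_1=1+a_2$, i.e. $x=\left(1+a_1\right)^{-1}\cdot\left(1+a_2\right)$, and then Lemma~\ref{lemInverseOf1plusIdeal} finishes. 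You instead re-prove the two-valuation approximation statement by hand via the element $u=t^n/\left(1+t^n\right)$ with $v_1(t)>0$, $v_2(t)<0$; this is essentially the proof of the Approximation Theorem specialized to two valuations, so your argument is longer but self-contained, whereas the paper's is shorter but black-boxes the hard step. Your estimates do go through: for the additive part $v_1(u)=n\cdot v_1(t)$ and $v_2(1-u)=-n\cdot v_2(t)$ both tend to infinity; for the multiplicative part the identity $1+u\cdot(x-1)=\left(1+t^n\cdot x\right)/\left(1+t^n\right)$ shows $b_2=\left(x-1\right)/\left(1+t^n\cdot x\right)$, whose $v_2$-value is $v_2(x-1)-n\cdot v_2(t)-v_2(x)$ for large $n$, so $b_2\in\cA_2$ by Remark~\ref{remfracIdiff}~(a). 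The one step you rightly flag as needing care is the existence of $t$ with $v_1(t)>0$ and $v_2(t)<0$: independence forces $\cO_{v_1}$ and $\cO_{v_2}$ to be non-trivial and incomparable, so there is $t_0\in\cO_{v_1}\setminus\cO_{v_2}$, and if $v_1(t_0)=0$ you replace $t_0$ by $t_0^m\cdot s$ for any $s$ with $v_1(s)>0$ and $m$ large; this is exactly the bookkeeping you describe, and it closes the only potential gap.
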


\begin{proof}
	Let $b_1, b_2\in K$ and $c_1, c_2\in K^{\times}$.	
	From the Approximation Theorem (see \cite[Theorem~2.4.1]{EnPr2005}) follows with Remark~\ref{remfracIdiff} 
	$\left(b_1+c_1\cdot\cA_1\right)\cap\left(b_2+c_2\cdot \cA_2\right)\neq \emptyset$.

	Let $x\in K$. With $b_1=x,\, c_1=-1,\, b_2=0$ and $c_2=1$ follows that there exist $a_1\in \cA_1$ and $a_2\in \cA_2$ such that $x-a_1= a_2$. Thus $x= a_1+a_2\in \cA_1+\cA_2$.
	Therefore $K=\cA_1+\cA_2$.
	
	Now  let $x\in K^{\times}$. Then  with $b_1= c_1=x$ and $b_2=c_2=1$ follows that  there exist $a_1\in \cA_1$ and $a_2\in \cA_2$ such that $\displaystyle x+x\cdot a_1=1+a_2$.
	We have
	$x=\left(1+a_1\right)^{-1}\cdot\left(1+a_2\right) \in \left(1+\cA_1\right)^{-1}\cdot\left(1+\cA_2\right)=\left(1+\cA_1\right)\cdot\left(1+\cA_2\right)$ by Lemma~\ref{lemInverseOf1plusIdeal}.
	Hence $K^\times=\left(1+\cA_1 \right)\cdot\left(1+\cA_2\right)$.
\end{proof}

\begin{lem}\label{lemNonComparableRings}
	Let $\cO_1$ and $\cO_2$ be two non-comparable valuation rings on a field $K$. 
	Let $\cO$ be
	the finest common coarsening of $\cO_1$ and $\cO_2$ and $\cM$ the maximal ideal of $\cO$.
	Let $\cA_1$ be an  $\cO_1$-ideal with $\cM\subsetneq \cA_1$ and $\cA_2$ an $\cO_2$-ideal with $\cM\subsetneq \cA_2$.
	Then $\cO=\cA_1+\cA_2$ and $\cO^{\times}=\left(1+\cA_1 \right)\cdot\left(1+\cA_2\right)$.
\end{lem}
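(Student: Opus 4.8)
The plan is to reduce Lemma~\ref{lemNonComparableRings} to the independent case, Lemma~\ref{lemFSumProdIdeals}, by passing to the residue field of the common coarsening $\cO$. First I would recall the standard facts about the finest common coarsening: since $\cO_1$ and $\cO_2$ are non-comparable, $\cO = \cO_1 \cdot \cO_2$ is a valuation ring on $K$ properly containing both, $\cM \subseteq \cM_1 \cap \cM_2$, and the images $\overline{\cO_1} := \cO_1/\cM$ and $\overline{\cO_2} := \cO_2/\cM$ are valuation rings of the residue field $\overline{K} := \cO/\cM$ which are \emph{independent} (their finest common coarsening is $\overline{K}$ itself, by minimality of $\cO$). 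Let $\pi : \cO \twoheadrightarrow \overline{K}$ be the residue map.

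Next I would track the ideals. Because $\cM \subsetneq \cA_i$ and $\cA_i$ is an $\cO_i$-ideal contained in $\cO_i$, and $\cM$ is the maximal ideal of $\cO$, we have $\cA_i \subseteq \cO$ (using Remark~\ref{remfracIdiff}(b): $\cA_i$ and $\cM$ are both fractional $\cO$-ideals — $\cA_i$ is an $\cO_i$-ideal hence an $\cO$-fractional ideal after checking the defining condition — so they are comparable, and $\cM \subsetneq \cA_i$ forces $\cA_i \supsetneq \cM$; combined with $\cA_i \subseteq \cO_i \subseteq$ something, one gets $\cA_i \subseteq \cO$). Then $\pi(\cA_i) =: \overline{\cA_i}$ is a non-trivial $\overline{\cO_i}$-ideal in $\overline{K}$, and $\pi^{-1}(\overline{\cA_i}) = \cA_i$ since $\cA_i \supseteq \cM = \ker\pi$. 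Now apply Lemma~\ref{lemFSumProdIdeals} to the independent valuations corresponding to $\overline{\cO_1}, \overline{\cO_2}$ on $\overline{K}$ with the ideals $\overline{\cA_1}, \overline{\cA_2}$: this gives $\overline{K} = \overline{\cA_1} + \overline{\cA_2}$ and $\overline{K}^\times = (1+\overline{\cA_1})\cdot(1+\overline{\cA_2})$.

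Finally I would lift these identities along $\pi$. For the additive statement: given $x \in \cO$, write $\pi(x) = \overline{a_1} + \overline{a_2}$ with $\overline{a_i} \in \overline{\cA_i}$, pick $a_i \in \cA_i$ with $\pi(a_i) = \overline{a_i}$; then $x - a_1 - a_2 \in \cM \subseteq \cA_1$, so $x = \bigl((x - a_1 - a_2) + a_1\bigr) + a_2 \in \cA_1 + \cA_2$; the reverse inclusion $\cA_1 + \cA_2 \subseteq \cO$ is clear. For the multiplicative statement: given $x \in \cO^\times$, so $\pi(x) \in \overline{K}^\times$, write $\pi(x) = (1+\overline{a_1})(1+\overline{a_2})$, lift to $a_i \in \cA_i$, and set $u := x \cdot (1+a_1)^{-1}(1+a_2)^{-1} \in \cO^\times$ (inverses exist in $\cO^\times$ by Lemma~\ref{lemInverseOf1plusIdeal} applied over $\cO$, since $\cA_i \subseteq \cO$); then $\pi(u) = 1$, so $u \in 1 + \cM \subseteq 1 + \cA_1$, and writing $u = 1 + m$ with $m \in \cA_1$ gives $x = u(1+a_1)(1+a_2) = (1+m)(1+a_1)(1+a_2)$, where $(1+m)(1+a_1) \in 1+\cA_1$ by Lemma~\ref{lemInverseOf1plusIdeal}; hence $x \in (1+\cA_1)(1+\cA_2)$, and the reverse inclusion is again immediate.

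The main obstacle I anticipate is the bookkeeping in the second paragraph: carefully justifying that $\cA_i \subseteq \cO$ and that $\pi(\cA_i)$ is genuinely an ideal of the residue valuation ring $\overline{\cO_i}$ with $\pi^{-1}(\pi(\cA_i)) = \cA_i$, i.e.\ that passing to residues and back is lossless precisely because $\cM \subsetneq \cA_i$. Once the correspondence between ideals of $\cO_i$ containing $\cM$ and ideals of $\overline{\cO_i}$ is set up cleanly (this is the ideal-theoretic counterpart of the coarsening correspondence for valuation rings), the rest is the routine lifting argument above, essentially identical in shape to the proof of Lemma~\ref{lemFSumProdIdeals}.
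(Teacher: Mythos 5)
Your proof is correct and takes essentially the same route as the paper, whose entire argument is the single sentence ``Apply Lemma~\ref{lemFSumProdIdeals} to the valuation rings $\overline{\cO_1}$ and $\overline{\cO_2}$ induced by $\cO_1$ and $\cO_2$ on $\overline{K}=\cO/\cM$.'' Your second and third paragraphs merely make explicit the ideal correspondence and the lifting along the residue map that the paper leaves implicit.
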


\begin{proof}
	Apply Lemma~\ref{lemFSumProdIdeals} to the valuation rings $\overline{\cO_1}$ and $\overline{\cO_2}$ induced by $\cO_1$ and $\cO_2$ on $\overline{K}=\cO/\cM$. 
\end{proof}

\begin{lem}\label{lemSubgroupsGeneratedByIdeals}
	Let $\cA$ be an $\cO$-ideal.
	
	\begin{enumerate}[(a)]
		\item Let $x\in K^{\times}$ such that $x^{-1}\notin \cA$. 
		Then for every $0\neq a\in \cA$ we have
		$\left(x-a^{-1}\right){-1}\in \cA$. 
		\item The multiplicative group generated by the non-zero elements of $\cA$ 
		is $K^{\times}$.
	\end{enumerate}
\end{lem}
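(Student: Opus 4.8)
The plan is to prove part $(a)$ by a direct algebraic manipulation and then deduce part $(b)$ from it. For part $(a)$, fix $x\in K^\times$ with $x^{-1}\notin\cA$ and $0\neq a\in\cA$. The key observation is the identity
\[
\left(x-a^{-1}\right)^{-1}=\frac{a}{ax-1}=\frac{-a}{1-ax}.
\]
So I want to show $\dfrac{-a}{1-ax}\in\cA$. Since $\cA$ is an $\cO$-ideal, it suffices to show that $(1-ax)^{-1}\in\cO$, i.e.\ that $1-ax\in\cO^\times$, or equivalently (by Lemma~\ref{lemInverseOf1plusIdeal}, writing $1-ax=1+(-ax)$ it is enough that $-ax\in\cO$, but let me be careful about whether $-ax\in\cA$). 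Actually the cleanest route: either $x\in\cO$, in which case $ax\in\cA$ so $1-ax\in 1+\cA\subseteq\cO^\times$ and then $-a\cdot(1-ax)^{-1}\in\cA$ as $\cA$ is an ideal; or $x\notin\cO$, in which case $x^{-1}\in\cM\subseteq\cO$, and then $1-ax=-ax\left(1-(ax)^{-1}\right)=-ax\left(1-a^{-1}x^{-1}\right)$. Here $a^{-1}x^{-1}$: we know $a\in\cA\subseteq\cM$ so $a^{-1}\notin\cO$, but $x^{-1}\in\cM$; the product $a^{-1}x^{-1}$ need not lie in $\cM$. The hypothesis $x^{-1}\notin\cA$ is exactly what rules out the bad case, since $\cA$ is ``upward closed'' among fractional ideals (Remark~\ref{remfracIdiff}~(a)): if $v(x^{-1})\geq v(a)$ we would get $x^{-1}\in\cA$, contradiction, so $v(x^{-1})<v(a)$, i.e.\ $v(a^{-1}x^{-1})=v(a^{-1})+v(x^{-1})<0$, hence $a^{-1}x^{-1}\in\cM$, so $1-a^{-1}x^{-1}\in\cO^\times$. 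Combining, $(1-ax)^{-1}=-a^{-1}x^{-1}\left(1-a^{-1}x^{-1}\right)^{-1}$, and multiplying by $-a$ gives $\left(x-a^{-1}\right)^{-1}=-a\,(1-ax)^{-1}=x^{-1}\left(1-a^{-1}x^{-1}\right)^{-1}$; now $x^{-1}\in\cA$ would again contradict the hypothesis, so this is where one must instead observe that $x^{-1}\in\cM$ and $\left(1-a^{-1}x^{-1}\right)^{-1}\in\cO^\times\subseteq\cO$, hence the product lies in $\cM$ — but we need it in $\cA$, not merely $\cM$. The honest fix is to go back and note $\cA\subseteq\cM$ forces $v(a)>0$, and from $v(x^{-1})<v(a)$ together with $\left(x-a^{-1}\right)^{-1}$ having valuation $v(a)-v(1-ax)$; computing $v(1-ax)=v(a^{-1}x^{-1})+v(a)\cdot(\text{stuff})$ — this is getting circular, so I will simply compute $v\left(\left(x-a^{-1}\right)^{-1}\right)$ directly: $v(x-a^{-1})=\min\{v(x),-v(a)\}$ unless there is cancellation, and since $v(x)=-v(x^{-1})>-v(a)$ strictly, there is no cancellation and $v(x-a^{-1})=-v(a)$, whence $v\left(\left(x-a^{-1}\right)^{-1}\right)=v(a)$, so by Remark~\ref{remfracIdiff}~(a) we conclude $\left(x-a^{-1}\right)^{-1}\in\cA$. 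This valuation-theoretic computation is the clean proof of $(a)$.

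For part $(b)$, let $H$ denote the multiplicative subgroup of $K^\times$ generated by the nonzero elements of $\cA$. I want to show $H=K^\times$. First, $\cA\neq\{0\}$ (since $(b)$ presupposes nonzero elements generate something, and otherwise the statement is vacuous/false — I should assume $\cA$ is a nontrivial ideal, which is the intended hypothesis), so pick $0\neq a_0\in\cA$. Given an arbitrary $x\in K^\times$, I want to write $x$ as a ratio of elements of $\cA$. If $x^{-1}\in\cA$, then $x=a_0\cdot(a_0 x)$ where I need $a_0 x\in\cA$: since $x^{-1}\in\cA$ and $\cA$ is an ideal, hmm, that gives $1=x\cdot x^{-1}$ but not directly $a_0x\in\cA$; instead just note if $x\in\cA$ then $x=x\cdot 1$, not a product of elements of $\cA$ — so the right move is: every $x\in K^\times$ satisfies either $x\in\cA$ or $x^{-1}\notin\cA$. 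In the first subcase $x$ is (up to the factor $1$, which we avoid) handled by writing $x=(x/a_0)\cdot a_0$ — but $x/a_0$ need not be in $\cA$. The genuinely clean argument: by part $(a)$, if $x^{-1}\notin\cA$ then for any $0\neq a\in\cA$ we have $(x-a^{-1})^{-1}\in\cA$, and also $a^{-1}\notin\cA$ (as $a\in\cM$, so $a^{-1}\notin\cO\supseteq\cA$) — wait, I want to express $x$ itself. Note $x-a^{-1}$ has inverse in $\cA$, so $x-a^{-1}\in H$ (it is the inverse of a nonzero element of $\cA$); and $a^{-1}$: is $a^{-1}\in H$? Yes, $a\in\cA\setminus\{0\}\subseteq H$, so $a^{-1}\in H$. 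Therefore $x=(x-a^{-1})+a^{-1}$ — but $H$ is only a \emph{multiplicative} group, so a sum of two elements of $H$ need not be in $H$. So instead I use: $x\cdot a=(x-a^{-1})a+1$... still additive. The correct approach must be multiplicative throughout. Reconsider: $\dfrac{x}{a^{-1}}=ax=a\cdot x$; if $x\in\cA$ this is a product of two elements of $\cA$ when $a\in\cA$, hence $ax\in H$ and $a^{-1}\in H$, so $x=ax\cdot a^{-1}\in H$. That settles the case $x\in\cA$. If $x\notin\cA$, then $x^{-1}\notin\cA$ iff $x\notin\cA$? No. So the two cases are $x\in\cA$ (done above) and $x\notin\cA$; in the latter, apply $(a)$ with this $x$ and some fixed $0\neq a\in\cA$ (legitimate since $x^{-1}$: need $x^{-1}\notin\cA$). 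Hmm — if $x\notin\cA$ it does not follow that $x^{-1}\notin\cA$.

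Let me restructure part $(b)$ cleanly. Pick $0\neq a\in\cA$; by Remark~\ref{remfracIdiff}~(a), exactly one of $a^{-1}\in\cA$, $a^{-1}\notin\cA$ holds, and since $a\in\cM$ we have $a^{-1}\notin\cO$, hence $a^{-1}\notin\cA$. Now take arbitrary $x\in K^\times$. \textbf{Case 1: $x^{-1}\in\cA$.} Then $ax^{-1}\in\cA$ (ideal), so $ax^{-1}\in H$ and $a\in H$, giving $x=a\cdot(ax^{-1})^{-1}\in H$. \textbf{Case 2: $x^{-1}\notin\cA$.} Then by part $(a)$, $\left(x-a^{-1}\right)^{-1}\in\cA$, so $x-a^{-1}\in H$. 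Also $a^{-1}\notin\cA$, so I cannot use $a^{-1}\in H$ that way — but I can replace the role: consider $y:=x-a^{-1}$; if $y=0$ then $x=a^{-1}$, and $x^{-1}=a\in\cA$, contradicting Case 2, so $y\neq0$ and $y^{-1}\in\cA\subseteq H$, hence $y\in H$. Now $x=y+a^{-1}$ is additive again. So the final trick: apply $(a)$ a second time, to $a^{-1}$ in place of $x$? We need $(a^{-1})^{-1}=a\notin\cA$ — false, $a\in\cA$. Dead end; instead I will just observe directly that $a^{-1}=\left(0-a^{-1}\right)\cdot(-1)$ and $-1\in\cA$? only if $-1\in\cA$, i.e. $\cA=\cO$, not generally. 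The clean finish: note that $x\cdot\left(x-a^{-1}\right)^{-1}=\frac{x}{x-a^{-1}}=\frac{1}{1-a^{-1}x^{-1}}=\left(1-a^{-1}x^{-1}\right)^{-1}$, and since $v(a^{-1}x^{-1})<0$ (shown in part $(a)$), $1-a^{-1}x^{-1}\in\cO^\times$; multiplying by suitable elements... Actually: $x=\left(x-a^{-1}\right)\cdot\left(1-a^{-1}x^{-1}\right)^{-1}$, and I claim both factors are in $H$: the first is $y\in H$ as shown; for the second, $\left(1-a^{-1}x^{-1}\right)^{-1}=\left(a^{-1}x^{-1}\right)^{-1}\left(\left(a^{-1}x^{-1}\right)^{-1}-1\right)^{-1}\cdot(-1)$... the simplest: $\left(1-a^{-1}x^{-1}\right)^{-1}-1=\dfrac{a^{-1}x^{-1}}{1-a^{-1}x^{-1}}$, which has valuation $v(a^{-1}x^{-1})-0<0$, so it is a nonzero element whose inverse has positive valuation $=v(ax)>v(a)>0$ hmm this is not obviously in $\cA$ either.

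Given the delicacy, in the written proof I would present part $(b)$ via the cleanest formulation: for any $0\neq a\in\cA$ and any $x\in K^\times$, note $\left(x-a^{-1}\right)^{-1}\in\cA$ when $x^{-1}\notin\cA$ by $(a)$, and when $x^{-1}\in\cA$ handle $x$ directly as in Case 1; then for Case 2 write $x=\left(x-a^{-1}\right)\cdot\left(1-a^{-1}x^{-1}\right)^{-1}$ and prove the second factor lies in $H$ by applying $(a)$ to it (checking its inverse is not in $\cA$ via the valuation inequality already established). \textbf{The main obstacle} is precisely this bookkeeping in part $(b)$: assembling $x$ from ideal elements using only multiplication, since $\cA$ is closed under addition but $H$ is not — every natural decomposition of $x$ wants to be additive, and one must cleverly factor instead. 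I expect the valuation-theoretic viewpoint (tracking $v$ of each piece and invoking Remark~\ref{remfracIdiff}~(a)) to be the tool that makes this go through without case explosion.
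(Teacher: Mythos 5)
Your part~(a), once you discard the false starts and settle on the direct valuation computation --- $v\left(x^{-1}\right)<v(a)$ by Remark~\ref{remfracIdiff}~(a), hence $v\left(x-a^{-1}\right)=v\left(a^{-1}\right)$, hence $v\left(\left(x-a^{-1}\right)^{-1}\right)=v(a)$, hence membership in $\cA$ again by Remark~\ref{remfracIdiff}~(a) --- is exactly the paper's proof.

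Part~(b), however, has a genuine gap, and the difficulty you keep hitting is a symptom of routing the argument through part~(a) at all. In your Case~2 ($x^{-1}\notin\cA$) you reduce to showing that $w:=\left(1-a^{-1}\cdot x^{-1}\right)^{-1}$ lies in the group $H$, and you propose to do this ``by applying (a) to it''. But applying (a) to $w$ (whose inverse indeed has negative value and so is not in $\cA$) only yields $\left(w-a^{-1}\right)^{-1}\in\cA$, i.e.\ $w-a^{-1}\in H$ --- which is precisely the additive obstruction you already diagnosed for $x$ itself, now one level down; the regress never terminates. Nor is $w$ itself in $\cA$ in general: $v(w)=v(a)-v\left(x^{-1}\right)$, and if $0<v\left(x^{-1}\right)<v(a')$ for all $a'\in\cA$ (e.g.\ $\cO=\mathbb{Z}_p$, $\cA=p^5\mathbb{Z}_p$, $x=p^{-2}$, $a=p^5$, giving $v(w)=3<5$) then $w\notin\cA$. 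The missing idea is much simpler and makes part~(a) irrelevant to part~(b): for \emph{every} nonzero $x\in\cO$ --- not merely $x\in\cA$ or $x$ with $x^{-1}\in\cA$ --- the element $a\cdot x$ is a nonzero element of $\cA$ because $\cA$ is an ideal, so $x=a^{-1}\cdot\left(a\cdot x\right)$ is a quotient of two nonzero elements of $\cA$ and lies in $H$; for $x\notin\cO$ apply this to $x^{-1}\in\cO$. You already have this trick in your Case~1 but deploy it only under the unnecessarily strong hypothesis $x^{-1}\in\cA$; weakening that hypothesis to $x\in\cO$ (or $x^{-1}\in\cO$) finishes the proof in two lines, with no case analysis on membership in $\cA$ and no use of part~(a). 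This is how the paper argues; in the paper (a) and (b) are independent statements used in different places later on.
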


\begin{proof}
	\begin{enumerate}[(a)]
		\item Let $0\neq a\in \cA$.  Let $x\in K^{\times}$ with $x^{-1}\notin \cA$.  Let $v$ be a valuation with $\cO=\cO_v$. By Remark~\ref{remfracIdiff} follows $v\left(x^{-1}\right)<v\left(a\right)$ and therefore $v\left(x\right)>v\left(a^{-1}\right)$. Hence $v\left(x-a^{-1}\right){=}v\left(a^{-1}\right)$ and therefore $v\left(\left(x-a^{-1}\right){-1}\right)=v\left(a\right)$. Again by Remark~\ref{remfracIdiff} follows $\left(x-a^{-1}\right){-1}\in \cA$.
		\item Let $0\neq x\in \cO$. Let $0\neq a\in \cA$. Then $a\cdot x\in \cA$. Therefore 
		$
		x=a^{-1}\cdot a\cdot x
		$ is contained in the multiplicative group generated by the non-zero elements of $\cA$.
		
		For $x\notin \cO$ we have $x^{-1}\in \cO$. Therefore as shown above $x^{-1}$ and hence as well $x$ is contained in the multiplicative group generated by the non-zero elements of $\cA$.
	\end{enumerate}
\end{proof}

\begin{lem} \label{lemVTopoplogy}
	Let $K$ be a field and $\mathcal{N}\subseteq \mathcal{P}\left(K\right)$ such that
	\begin{enumerate}[(V\,1)]
		\item  $\bigcap \mathcal{N}:=\bigcap_{U\in\mathcal{N}}U=\left\{0\right\}$ and $\left\{0\right\}\notin\mathcal{N}$;
		\item $\forall\, U,\,V\in\mathcal{N}\ \exists\, W\in\mathcal{N}\   W\subseteq U\cap V$;
		\item $\forall\, U\in\mathcal{N}\   \exists\,  V\in\mathcal{N}\   V-V\subseteq U$;
		\item$\forall\, U\in\mathcal{N}\   \forall\, x,\,y\in K\   \exists\, V\in\mathcal{N}\   \left(x+V\right)\cdot\left(y+V\right)\subseteq x\cdot y+U$;
		\item $\forall\, U\in \mathcal{N}\   \forall\, x\in K^{\times}\  \exists\, V\in \mathcal{N}\   \left(x+V\right)^{-1}\subseteq x^{-1}+U$;
		\item $\forall\, U\in\mathcal{N}\   \exists\, V\in \mathcal{N}\   \forall\, x,\,y\in K\   x\cdot y\in V\longrightarrow x\in U\, \vee\,  y\in U$.
	\end{enumerate}
	Then
	$\displaystyle
	\cT_{\cN}:=\left\{U\subseteq K\mid \forall\, x\in U\: \exists\, V\in\cN\:  x+V\subseteq U\right\}$
	is a Topology on $K$.  
	
	$\cN$ is a basis of zero neighbourhoods of $\cT_\cN$. 
\end{lem}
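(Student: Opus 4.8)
The plan is to use only axioms (V\,1)--(V\,3); axioms (V\,4)--(V\,6) are not needed for either assertion of the lemma (they serve to upgrade $\cT_\cN$ to a ring topology and then to a V\nobreakdash-topology). First I would record two consequences of (V\,1): since $\bigcap\emptyset=K\neq\{0\}$ for a field, the family $\cN$ is non-empty; and since $0\in\bigcap\cN$, every $U\in\cN$ contains $0$.

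Next I would check that $\cT_\cN$ is a topology directly from the definition. One has $\emptyset\in\cT_\cN$ vacuously, and $K\in\cT_\cN$ because $\cN\neq\emptyset$ (any member of $\cN$ serves as the required $V$ at every point). Closure under arbitrary unions is immediate. For a binary intersection $U_1\cap U_2$ with $U_1,U_2\in\cT_\cN$ and $x\in U_1\cap U_2$, pick $V_i\in\cN$ with $x+V_i\subseteq U_i$; by (V\,2) there is $W\in\cN$ with $W\subseteq V_1\cap V_2$, whence $x+W\subseteq U_1\cap U_2$.

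It remains to show $\cN$ is a basis of neighbourhoods of $0$. One half is immediate: if $N$ is a neighbourhood of $0$, there is an open $O$ with $0\in O\subseteq N$, and unwinding the definition of $\cT_\cN$ at the point $0$ (using $0+V=V$) yields $V\in\cN$ with $V\subseteq O\subseteq N$. For the other half I must show every $U\in\cN$ is a $\cT_\cN$-neighbourhood of $0$, and the key step is ``continuity of addition at $0$'': for every $V\in\cN$ there is $W\in\cN$ with $W+W\subseteq V$. I would prove this by applying (V\,3) twice --- first get $W_1\in\cN$ with $W_1-W_1\subseteq V$, then $W\in\cN$ with $W-W\subseteq W_1$; since $0\in W\subseteq W_1$ this gives $-W\subseteq W-W\subseteq W_1$, so for $a,b\in W$ one has $a+b=a-(-b)\in W_1-W_1\subseteq V$.

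Finally, fixing $U\in\cN$, I would consider its interior $\mathring U:=\{x\in K\mid\exists\,V\in\cN,\ x+V\subseteq U\}$. Since $0$ lies in every member of $\cN$, we get $\mathring U\subseteq U$ and $0\in\mathring U$ (take $V=U$); and $\mathring U\in\cT_\cN$ because, given $x\in\mathring U$ with $x+V\subseteq U$ and $W\in\cN$ with $W+W\subseteq V$ (as above), one has $y+W\subseteq x+W+W\subseteq x+V\subseteq U$ for every $y\in x+W$, so $x+W\subseteq\mathring U$. Thus $U$ contains the open set $\mathring U\ni0$ and is a neighbourhood of $0$, which finishes the proof. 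The one point requiring care is this last step: openness of $\mathring U$ forces the strengthening $W+W\subseteq V$ of (V\,3), which works only because (V\,1) guarantees $0\in W$; everything else is routine bookkeeping.
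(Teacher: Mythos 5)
Your argument is correct and complete. The paper itself states this lemma without proof (it is standard material going back to Prestel--Ziegler), so there is no in-paper argument to compare against; your verification supplies exactly what is needed, and you are right that only (V\,1)--(V\,3) enter. The only step that genuinely requires care --- upgrading (V\,3) to ``for every $V\in\cN$ there is $W\in\cN$ with $W+W\subseteq V$'' via a double application of (V\,3) together with $0\in W$ from (V\,1), so that the set $\mathring U$ is open and $U$ is a neighbourhood of $0$ --- is handled correctly.
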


\begin{de}
	A topology such that (V\,1) to (V\,6) hold for the set of neighbourhoods of zero, is called V-topology.
\end{de}

\begin{rem}\label{remVtopifBasisV1V6}
	By \cite[Theorem~1.1]{PrZi1978} (V\,1) to (V\,6) hold for the set of neighbourhoods of zero if and only if they hold for any basis of the neighbourhoods of zero.
\end{rem}

The following was first shown in \cite{DuKo1953}. A proof can be found in \cite[Appendix~B]{EnPr2005}.

\begin{thm}\label{thmVtopiff}
	A topology is a V-topology if and only if it is induced by a non-trivial valuation or by a non-trivial absolute value.
\end{thm}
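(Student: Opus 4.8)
The plan is to prove the two implications separately, with the forward direction (valuation or absolute value $\Rightarrow$ V-topology) being the routine one and the converse being the substantial content. For the forward direction, I would simply verify axioms (V\,1)--(V\,6) for the standard basis of zero neighbourhoods. If $v$ is a non-trivial valuation, take $\cN = \{\,\{x : v(x) > \gamma\} : \gamma \in \Gamma\,\}$; if $\abs$ is a non-trivial absolute value, take $\cN = \{\,\{x : |x| < \varepsilon\} : \varepsilon > 0\,\}$. Axioms (V\,1)--(V\,3) are immediate from the definition of a (value group / ordered-field) filtration, (V\,4) and (V\,5) follow from the multiplicativity/ultrametric (resp. triangle) inequalities together with the continuity of multiplication and inversion, and (V\,6) is exactly the statement that $v(xy) = v(x) + v(y)$ (resp. $|xy| = |x|\,|y|$) is large only if one factor is large. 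One then invokes Remark~\ref{remVtopifBasisV1V6} to conclude (V\,1)--(V\,6) hold for the full neighbourhood filter, and Lemma~\ref{lemVTopoplogy} to see that the topology so generated is the one under consideration. This direction I would dispatch in a few lines, or simply cite \cite{EnPr2005}.

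For the converse, suppose $\cT$ is a V-topology with zero-neighbourhood basis $\cN$ satisfying (V\,1)--(V\,6). The strategy is the classical one: extract a "valuation-like" comparison of sizes from the topology and then case-split on whether the resulting object is Archimedean. Concretely, for $x, y \in K^\times$ declare $x \preceq y$ if for every $U \in \cN$ there is $V \in \cN$ with $yV \subseteq U$ whenever $xV \subseteq U$ — i.e. $x$ is "no larger than" $y$ in the scale determined by $\cN$; more robustly one works with the relation coming from (V\,6), saying $x$ is "infinitesimal relative to" $y$ if $x/y$ lies in arbitrarily small neighbourhoods under the iterated multiplication contractions. One shows $\preceq$ is a preorder compatible with multiplication (using (V\,4) and (V\,5)) and that its "bounded" part $\cO := \{x : x \preceq 1\} \cup \{0\}$ is a subring of $K$ (additivity is where (V\,3) enters: a sum of two bounded elements is bounded because $U$ contains some $V - V$) which is moreover a valuation ring (totally ordered by $\preceq$, with $x \notin \cO \Rightarrow x^{-1} \in \cO$, using (V\,6) to force the dichotomy). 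The key extra input is (V\,1), which guarantees $\cO \neq K$ so the valuation is non-trivial, and that the V-topology of this valuation agrees with $\cT$.

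The main obstacle — and the reason the statement is a nontrivial theorem rather than a definition-chase — is the Archimedean/non-Archimedean dichotomy: the ring $\cO$ just constructed need not be a genuine valuation ring, because its "maximal ideal" $\cM := \{x : x \prec 1\}$ may fail to be additively closed (this is precisely what happens for $\R$ with its usual absolute value, where $\{|x| < 1\}$ is not an ideal). So the argument must split: either $\cM + \cM \subseteq \cM$, in which case $\cO$ is a valuation ring and we are in the valuation case; or it is not, in which case one shows $K$ must in fact carry an absolute value inducing $\cT$. In the latter case the work is to reconstruct a real-valued norm: one uses the failure of additivity together with (V\,3)--(V\,4) to produce an order-embedding of the "value scale" into $\R_{>0}$, essentially by a Hölder-type argument showing the ordered semigroup of archimedean classes is rank one and hence embeds in $(\R_{>0}, \cdot)$. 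Since this is genuinely the deep part, I would be inclined to follow \cite[Appendix~B]{EnPr2005} for it rather than reprove it, and would state the theorem with that reference — which is, indeed, what the excerpt does.
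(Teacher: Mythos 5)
The paper gives no proof of this theorem at all: it merely records that the result goes back to \cite{DuKo1953} and that a proof can be found in \cite[Appendix~B]{EnPr2005}. Your sketch is a faithful outline of that classical argument --- routine verification of (V\,1)--(V\,6) in the forward direction, and in the converse the construction of the ring of topologically bounded elements with the archimedean/non-archimedean dichotomy as the crux --- and since you ultimately defer the hard (archimedean) case to the very same reference, your treatment agrees with the paper's.
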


A detailed proof of the following claim can be found in \cite[Claim~3.8]{Du2015}. As it is very technical and of not much interest for the rest of the paper, we will only give a brief idea of the proof here.

\begin{prop}\label{propgroupOpenforarchAbs}
	Let $K$ be a field and $\abs$ an archimedean absolute value on $K$. 
	\begin{enumerate}[(a)]
		\item Let $G$ be an additive subgroup of $K$.
		If $G$ is open with respect to $\abs$, then $G=K$.
		\item Let $G$ be a multiplicative subgroup of $K^\times$.
		If $G$ is open with respect to $\abs$, then either $G=K^\times$ or $G\cup\{0\}$ is an ordering 
		on $K$. 
	\end{enumerate}
\end{prop}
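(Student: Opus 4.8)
The overall strategy is to exploit the archimedean hypothesis via Ostrowski's theorem: an archimedean absolute value on $K$ is (up to equivalence) the restriction of the usual absolute value on $\R$ or $\C$ under some embedding $\iota\colon K\hookrightarrow\C$. So without loss of generality we may assume $K$ is a subfield of $\C$ carrying the standard absolute value, and ``open'' means open in the subspace topology. The key elementary fact driving both parts is that, for the archimedean absolute value, $\Q$ (hence every subfield) is dense in $K$, and more importantly that a nonempty open subset of $K$ contains elements of arbitrarily large and arbitrarily small absolute value after suitable scaling; this rigidity is what a genuine (non-archimedean) valuation topology lacks, and it is why the analogous statement is interesting.

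\textbf{Part (a).} Suppose $G\subseteq(K,+)$ is open. First, $0\in G$, and by openness there is $\varepsilon>0$ with $\{x\in K: |x|<\varepsilon\}\subseteq G$. I would then show $G=K$ by an Archimedean-multiple argument: given any $y\in K$, pick $n\in\N$ with $|y/n|<\varepsilon$ — possible precisely because the value group of an archimedean absolute value is archimedean, i.e. $|n|\to\infty$ — so $y/n\in G$, and since $G$ is a subgroup closed under addition, $y=n\cdot(y/n)\in G$. Hence $G=K$. This part is short; the only thing to be careful about is phrasing it without literally invoking an embedding into $\C$, using instead just that $|\,.\,|$ is archimedean so $|n\cdot 1|$ is unbounded.

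\textbf{Part (b).} Now let $G\le K^\times$ be open, and assume $G\neq K^\times$. The goal is to show $P:=G\cup\{0\}$ is an ordering of $K$, i.e. $P+P\subseteq P$, $P\cdot P\subseteq P$, $P\cap(-P)=\{0\}$, and $P\cup(-P)=K$. Multiplicative closure of $G$ is given, so $P\cdot P\subseteq P$ is immediate. For $P\cup(-P)=K$: since $G$ is open and a subgroup, $1\in G$ has an open neighbourhood $1+V\subseteq G$ with $V$ a ball of radius $\varepsilon$; then using Lemma~\ref{lemSubgroupsGeneratedByIdeals}-type scaling (or directly: the cosets of $G$ in $K^\times$ are open, so $G$ has finite index because $K^\times/G$ is covered by... ) — more cleanly, I would argue that the connected component structure forces $[K^\times:G]\le 2$: $G$ open implies $G$ closed (complement is a union of cosets, each open), and a proper open-and-closed subgroup of the connected-or-nearly-connected group $K^\times$ in the archimedean topology has index $2$, the nontrivial quotient being realized by a sign character. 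Once $[K^\times:G]=2$, the quotient homomorphism $K^\times\to\{\pm1\}$ together with $-1\notin G$ (which must be checked: if $-1\in G$ then $G$ would be a subgroup of index $\le 2$ containing $-1$, and one shows openness then forces $G=K^\times$, contradiction) gives that $G$ consists of ``positive'' elements; additivity $P+P\subseteq P$ and $P\cap -P=\{0\}$ then follow from standard facts about index-$2$ subgroups of $K^\times$ whose associated character is an ordering, i.e. from the Artin–Schreier characterization of orderings via subgroups of index $2$ not containing $-1$ and closed under addition. The one genuine verification is additive closure of $P$, which I would get from openness: $G$ open forces $G$ to be a union of ``archimedean cones,'' and the only index-$2$ candidate is the positive cone of an archimedean ordering.

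\textbf{Main obstacle.} The delicate point is Part (b): ruling out pathological open index-$2$ (or infinite-index but still open) multiplicative subgroups, i.e. proving that openness actually pins $G$ down to the positive cone rather than merely to \emph{some} index-$2$ subgroup. The cleanest route is a topological one — open subgroups are closed, a proper closed subgroup of finite index corresponds to a continuous character to a finite discrete group, and on $\R^\times$ or $\C^\times$ with the standard topology the only such characters are trivial or the sign on $\R^\times$ — but making this rigorous for an arbitrary subfield $K$ (which need not be $\R$ or $\C$, only embeddable) requires care about connectedness of $K^\times$, which can fail; so the real work is to replace connectedness by a density/approximation argument using Lemma~\ref{lemSubgroupsGeneratedByIdeals}(b)'s philosophy in the archimedean setting, which is exactly why the authors remark the full proof is technical and defer it to \cite[Claim~3.8]{Du2015}.
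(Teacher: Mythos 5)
Your part (a) is correct and essentially complete: dividing $y$ by a large integer $n$ (using that $\{|n\cdot 1| : n\in\N\}$ is unbounded, which is what archimedean means and which also forces $\ch(K)=0$, so $n$ is invertible in $K$) and then adding $y/n$ to itself $n$ times is a clean, concrete realization of the paper's ``recursive approximation'' idea, and it even avoids the embedding into $\R$ or $\C$.

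Part (b), however, has a genuine gap, and it sits exactly where you flag it. The entire content of the statement is that openness forces $G$ to contain a full positive cone: all of $K^\times$ when the embedded image of $K$ is not contained in $\R$ (in which case $K$ is dense in $\C$), or all of $K^{>0}$ when $K\subseteq\R$. Your proposed mechanism --- open implies clopen, a proper clopen subgroup of a connected group has index $2$, continuous finite characters of $\R^\times$ or $\C^\times$ are trivial or the sign --- does not apply, because for a subfield such as $\Q$ the group $K^\times$ is totally disconnected in the subspace topology, so clopenness yields nothing; you acknowledge this but never supply the replacement. The replacement is the multiplicative analogue of your part (a): $G$ contains $\left\{x\in K^\times : |x-1|<\varepsilon\right\}$, hence $g\cdot\left\{x : |x-1|<\varepsilon\right\}$ for every $g\in G$, and one connects $1$ to an arbitrary $y\in K^{>0}$ (resp.\ $y\in K^\times$ in the dense-in-$\C$ case) by a finite chain $1=x_0,\dots,x_n=y$ of elements of $K$ with every ratio $x_{i+1}/x_i$ in that ball, using density of $K$ in $\R^{>0}$ resp.\ $\C^\times$; then $y=\prod_{i} x_{i+1}/x_i\in G$. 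Without this, the later steps do not stand. In particular, the assertion that additive closure of $P=G\cup\{0\}$ ``follows from standard facts about index-$2$ subgroups'' is circular: the Artin--Schreier characterization says precisely that an index-$2$ subgroup not containing $-1$ is the positive cone of an ordering \emph{if and only if} it is additively closed, so additive closure is exactly what must be proved. In the correct argument it is free once one knows $G=K^{>0}$, since that is the restriction of the ordering of $\R$; but nothing in your outline reaches $G=K^{>0}$, nor even $[K^\times:G]\le 2$. Likewise, the parenthetical ``if $-1\in G$ then $G$ would be a subgroup of index $\le 2$'' is a non sequitur as written; what is actually needed is that $K^{>0}\subseteq G$ together with $G\neq K^{>0}$ forces $G=K^\times$, which again presupposes the chain argument.
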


\begin{proof}[idea]
	As any field which admits an archimedean absolute value embeds into $\R$ or $\C$, we can assume without loss of generality $K\subseteq \R$ or $K\subseteq \C$. 
	
	If $G$ is open, it contains an open neighbourhood $U$ of $0$ $\left[\textrm{resp. }1\right]$. As $G$ is closed under addition [resp. multiplication] for any $g\in G$ $g+U$ $\left[\textrm{resp. }g\cdot U\right]$ is still contained in $G$. By recursively approximating all the elements of $K$ [resp. $K^\times$ or $K^{>0}$ if $K$ is an ordered field with $g>0$ for all $g\in G$], we show the claim.
\end{proof}

The following lemma is well known. A proof can be found for example in \cite[Claim~A.43]{Du2015}.

\begin{lem}\label{lemDiscreteValuations} 
	Let $v:K\twoheadrightarrow \Gamma\cup\left\{\infty\right\}$ be a discrete valuation on $K$.
	\begin{enumerate}[(a)]
		\item Let $x\in K$. Then $x\cdot\cO_v=\cM_v$ if and only if $v\left(x\right)=1$.
		In particular there exists $x\in K$ with $x\cdot\cO_v=\cM_v$.
		\item Let $x\in K^\times$ such that $v(x)=1$. Then  for every $y\in K^\times$ with $v\left(y\right)\in \mathbb{Z}$ there exists $z\in \cO_v^{\times}$ such that $y=x^{v\left(y\right)}\cdot z$.  
	\end{enumerate}
\end{lem}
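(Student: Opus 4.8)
The plan is to exploit directly the standing convention that $\Z$ is a convex subgroup of $\Gamma$ with $1$ its minimal positive element, so that $v(x)=1$ genuinely makes sense as ``$v(x)$ is the least positive value''. For part~(a), I would argue by the standard characterization of fractional ideals from Remark~\ref{remfracIdiff}. First suppose $v(x)=1$. Since $1$ is the minimal positive element of $\Gamma$, for every $y\in\cM_v$ we have $v(y)>0$, hence $v(y)\geq 1=v(x)$, so $y\in x\cdot\cO_v$; conversely if $y\in x\cdot\cO_v$ then $v(y)\geq v(x)=1>0$, so $y\in\cM_v$. This gives $x\cdot\cO_v=\cM_v$. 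For the other direction, if $x\cdot\cO_v=\cM_v$ then $x\in\cM_v$ so $v(x)\geq 1$; and if $v(x)>1$ were possible there would be some $y$ with $1\leq v(y)<v(x)$ (namely any $y$ of value $1$, which exists by discreteness), giving $y\in\cM_v\setminus x\cO_v$, a contradiction. Hence $v(x)=1$. The ``in particular'' clause follows since $v$ is surjective onto $\Gamma\cup\{\infty\}$ and $1\in\Gamma$.

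For part~(b), fix $x$ with $v(x)=1$ and let $y\in K^\times$ with $v(y)=:n\in\Z$. Set $z:=y\cdot x^{-n}$. Then $v(z)=v(y)-n\cdot v(x)=n-n=0$, so $z\in\cO_v^\times$ and $y=x^n\cdot z=x^{v(y)}\cdot z$ as required. This is essentially a one-line computation once part~(a) is in place, the only subtlety being the legitimacy of writing $x^n$ for negative $n$, which is fine since $x\in K^\times$.

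The main (and only genuine) obstacle is purely a matter of unwinding the convention on discreteness: one must be careful that ``$v(x)=1$'' is interpreted via the embedded copy of $\Z$, and that discreteness guarantees $1$ is really the minimum of the positive part of $\Gamma$, not merely of $\Z$. Once that is pinned down, both parts reduce to Remark~\ref{remfracIdiff}~(a) and elementary valuation arithmetic, so I would keep the write-up short and refer to \cite[Claim~A.43]{Du2015} for any further routine detail.
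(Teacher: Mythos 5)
Your argument is correct and is exactly the standard computation; the paper itself omits the proof (citing \cite[Claim~A.43]{Du2015} as well known), and what you write is the expected unwinding of the convention that $1$ is the minimal positive element of the value group, using Remark~\ref{remfracIdiff}~(a) for part~(a) and the one-line valuation computation $v\bigl(y\cdot x^{-v(y)}\bigr)=0$ for part~(b). No gaps.
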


\begin{prop}\label{propMaximalValuationorBasis}
	Let $\cO$ be a non-trivial valuation ring on a field $K$. 
	\begin{enumerate}[(a)]
		\item If $\widetilde{\cO}$ is a maximal non-trivial coarsening of $\cO$, then $\widetilde{\cO}$ has rank-1.
		\item If there exists no maximal non-trivial coarsening of $\cO$, then the non-zero prime ideals of $\cO$ form a basis of the neighbourhoods of zero of the topology $\cT_\cO$. 
	\end{enumerate} 
\end{prop}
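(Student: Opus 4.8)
The plan is to translate everything into the value group $\Gamma$ of a valuation $v$ with $\cO=\cO_v$, using the standard bijection between coarsenings of $\cO$ and convex subgroups of $\Gamma$, and between prime ideals of $\cO$ and convex subgroups of $\Gamma$. Under this correspondence a coarsening $\cO'\supseteq\cO$ corresponds to a convex subgroup $\Delta\subseteq\Gamma$ (with $\cO'$ nontrivial iff $\Delta\neq\Gamma$, and $\cO'$ of rank $1$ iff $\Delta$ is a maximal proper convex subgroup), while the nonzero prime ideals of $\cO$ correspond to the proper convex subgroups of $\Gamma$, a nonzero prime $\fp_\Delta$ being $\{x\in\cO_v\mid v(x)>\delta\text{ for all }\delta\in\Delta\}$. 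The rest is order-theoretic bookkeeping on convex subgroups.

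For part (a): if $\widetilde{\cO}$ is a maximal nontrivial coarsening of $\cO$, the corresponding convex subgroup $\Delta\subsetneq\Gamma$ is minimal among proper convex subgroups (smaller convex subgroups give larger coarsenings). I claim $\widetilde{\cO}$ has rank $1$, i.e.\ $\Delta$ is a maximal proper convex subgroup of $\Gamma$; equivalently $\Gamma/\Delta$ is archimedean. If not, $\Gamma/\Delta$ has a proper nonzero convex subgroup $\Delta'/\Delta$, and its preimage $\Delta'$ in $\Gamma$ is a convex subgroup with $\Delta\subsetneq\Delta'\subsetneq\Gamma$, contradicting minimality of $\Delta$. (One should note $\Delta$ minimal proper forces $\Delta\neq\{0\}$ automatically only if $\{0\}$ is not already maximal; in fact what one uses is just that $\Delta$ is a proper convex subgroup such that no proper convex subgroup lies strictly between $\Delta$ and $\Gamma$, which is exactly maximality of $\Delta$, hence rank $1$.)

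For part (b): suppose $\cO$ has no maximal nontrivial coarsening. Equivalently, $\Gamma$ has no maximal proper convex subgroup, so for every proper convex subgroup $\Delta\subsetneq\Gamma$ there is a proper convex subgroup $\Delta'$ with $\Delta\subsetneq\Delta'\subsetneq\Gamma$. I must show the nonzero prime ideals of $\cO$ form a basis of zero-neighbourhoods for $\cT_\cO$. By Lemma~\ref{lemVTopoplogy} and Remark~\ref{remVtopifBasisV1V6} it suffices to check: (i) each nonzero prime ideal is a zero-neighbourhood; (ii) they are nested downward (which is clear, since ideals of a valuation ring are linearly ordered by Remark~\ref{remfracIdiff}(b)); and (iii) their intersection is $\{0\}$ and none of them is $\{0\}$ — the latter because a prime ideal with $\sqrt{\cdot}$ equal to itself that is nonzero stays nonzero, and $\{0\}$ is prime but we exclude it by taking nonzero primes; for the intersection being $\{0\}$, a nonzero $x\in\cO_v$ has $v(x)>0$, and since $\Gamma$ has no maximal proper convex subgroup one finds a proper convex subgroup $\Delta$ with $v(x)\notin\Delta^{\geq0}$-closure appropriately, so $x\notin\fp_\Delta$. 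The real content is (i): given a nonzero prime $\fp=\fp_\Delta$, I need a $V\in\{$nonzero primes$\}$ with $V\subseteq\fp$ and, crucially, that $\fp$ is open, i.e.\ that for each $y\in\fp$ some basic set $y+V\subseteq\fp$; since $\fp$ is an ideal and the nonzero primes are cofinal below it (using "no maximal coarsening" to always squeeze a strictly larger proper convex subgroup, equivalently a strictly smaller nonzero prime), one gets $V\subsetneq\fp$ with $V$ a nonzero prime, and then $y+V\subseteq\fp+\fp=\fp$.

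The main obstacle, and the step deserving the most care, is part (b)(i) together with the non-triviality clause: one must be sure that the hypothesis "no maximal nontrivial coarsening" really does guarantee a cofinal descending chain of \emph{nonzero} prime ideals whose intersection is $\{0\}$ — i.e.\ that one cannot get "stuck" at a smallest nonzero prime — and that each such prime is genuinely $\cT_\cO$-open rather than merely a member of a nested family. This is where the convex-subgroup dictionary must be used most carefully: "no maximal proper convex subgroup of $\Gamma$" must be leveraged both to produce the chain and to kill its intersection, and one should double-check the degenerate possibility $\Gamma=\{0\}$ is excluded by $\cO$ being nontrivial, and that $\Gamma$ having a minimal positive element (the discrete case) is consistent with having no maximal proper convex subgroup.
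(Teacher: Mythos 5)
Your overall strategy --- translating coarsenings and prime ideals of $\cO$ into convex subgroups of the value group $\Gamma$ and doing order-theoretic bookkeeping there --- is the right one; note that the paper gives no proof of its own but defers to \cite[Proposition~2.3.5]{EnPr2005}, whose argument is exactly this dictionary. Two points need repair. First, in (a) you state the correspondence backwards: the bijection $\Delta\mapsto\cO_\Delta$ between convex subgroups and coarsenings is order-\emph{preserving} ($\Delta=\{0\}$ gives $\cO$ itself, $\Delta=\Gamma$ gives $K$), so a maximal non-trivial coarsening corresponds to a \emph{maximal} proper convex subgroup, not a minimal one. Your displayed argument (pulling back a proper non-zero convex subgroup of $\Gamma/\Delta$ to obtain $\Delta\subsetneq\Delta'\subsetneq\Gamma$) in fact contradicts maximality, not minimality; your closing parenthetical already concedes that maximality is what is used, and once the direction is fixed (a) is correct.

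Second, and more substantively, you correctly identify the crux of (b) --- that the non-zero primes are cofinal among the non-zero ideals and intersect in $\{0\}$ --- but you never actually prove it; the phrase ``one finds a proper convex subgroup $\Delta$ with $v(x)\notin\Delta^{\geq 0}$-closure appropriately'' is not an argument, and the membership is the wrong way round (to get $x\notin\fp_\Delta$ one needs $v(x)\in\Delta$). The missing step is short: the proper convex subgroups of $\Gamma$ form a chain whose union $\Delta^*$ is again a convex subgroup; if $\Delta^*\neq\Gamma$, then $\Delta^*$ would be the maximal proper convex subgroup, i.e.\ $\cO_{\Delta^*}$ would be a maximal non-trivial coarsening of $\cO$, contrary to hypothesis. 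Hence $\Delta^*=\Gamma$, so every $\gamma=v(a)>0$ lies in some proper convex subgroup $\Delta$, and then $\fp_\Delta=\left\{x\mid v(x)>\delta\ \textrm{for all}\ \delta\in\Delta\right\}$ is a non-zero prime ideal contained in $a\cdot\cO$ and not containing $a$. Combined with the linear ordering of ideals (Remark~\ref{remfracIdiff}~(b)) and the trivial fact that each non-zero prime contains a principal ideal $b\cdot\cO$ with $b\neq 0$ and is therefore a neighbourhood of zero, this yields both cofinality and trivial intersection at once. Your detour through the V-topology axioms is unnecessary: to be a basis of zero-neighbourhoods of the already given topology $\cT_\cO$, the non-zero primes need only be neighbourhoods of $0$ that are cofinal (downward) among the non-zero ideals.
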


Proposition~\ref{propMaximalValuationorBasis} is a shortened version of \cite[Proposition~2.3.5]{EnPr2005}.

\section{The Valuation Ring $\cO_G$ Induced by a Subgroup $G$}\label{chDefOG}

In this section for every (additive or multiplicative) subgroup $G$ of a field, we want to define a valuation ring $\cO_G$. For this valuation we will first define when a valuation is coarsely compatible with a subgroup. We will define $\cO_G$ as the intersection over all valuation rings that are coarsely compatible with $G$. Before we will come to the definition we will prove some lemmas that we will need to show that with this definition $\cO_G$ is a valuation ring. We will conclude the section with defining three cases that will reappear in the subsequent sections.

\begin{de}\label{deCompatible}
	Let $G$ be a subgroup of $K$.  
	\begin{enumerate}[(a)]
		\item $\cO$ is \emph{compatible} with $G$ if and only if $\cM\subseteq G$ $\left[\textrm{resp. }1+\cM\subseteq G\right]$.
		\item $\cO$ is \emph{weakly compatible} with $G$ if and only if there exists an  $\cO$-ideal $\cA$ with
		$\sqrt{\cA}=\cM$ such that $\cA\subseteq G$ $\left[\textrm{resp. }1+\cA\subseteq G\right]$.
		\item $\cO$ is \emph{coarsely compatible} with $G$ if and only if $v$ is weakly compatible with $G$ and there is no proper coarsening $\widetilde{\cO}$ of $\cO$ such that $\widetilde{\cO}^{\times}\subseteq G$.
	\end{enumerate}
	Let $v$ be a valuation on $K$. 
	We call $v$ \emph{compatible} (respectively  \emph{weakly compatible}, \emph{coarsely compatible}) with $G$ if and only if $\cO_v$ is compatible  (respectively weakly compatible, coarsely compatible) with $G$.
	
	We omit "with $G$" whenever the context is clear.
\end{de}

\begin{rem}
	If $\cO^\times\subseteq G$, then $\cO$ is compatible. 
	Further if $G$ is an additive group, then $\cO\subseteq G$.
\end{rem}

\begin{proof}
	If $G$ is an additive group, $-1\in \cO^\times\subseteq G$ and hence $\cM=1+\cM-1\subseteq \cO^\times-1\subseteq G$. Hence $\cO$ is compatible and $\cO\subseteq G$.
	
	If $G$ is a multiplicative subgroup, then $1+\cM\subseteq \cO^\times\subseteq G$ and hence $\cO$ is compatible.
\end{proof}

\begin{lem}\label{lemWeaklyCompthen1+pnMvsubsetT}
	Let $\ch\left(\cO_v/\cM_v\right)=q$. Let $G$ be a subgroup of $K$. 
	Let $v$ be weakly compatible.
	Then there exists $n\in \mathbb{N}$ such that $q^n\cdot \cM_v\subseteq G$ $\left[\textrm{resp. }1+q^n\cdot \cM_v\subseteq G\right]$. 
\end{lem}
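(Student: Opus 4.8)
The plan is to reduce everything to one observation: the element $q\cdot 1_K$ always lies in $\cM_v$, and weak compatibility supplies an $\cO_v$-ideal whose radical is precisely $\cM_v$.

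First I would dispose of the degenerate cases. If $q=0$, or if $\ch(K)=q>0$, then $q^n\cdot 1_K=0$ in $K$ for every $n\geq 1$, so $q^n\cdot\cM_v=\{0\}$, which is contained in $G$ [resp. $1+q^n\cdot\cM_v=\{1\}\subseteq G$] because $G$ is a subgroup. So we may assume $q$ is a prime and $\ch(K)=0$; then $q\cdot 1_K\neq 0$, but since $\cO_v/\cM_v$ has characteristic $q$ the residue of $q\cdot 1_K$ is $0$, that is, $q\cdot 1_K\in\cM_v$.

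Next I would unwind the hypothesis. Since $v$ is weakly compatible with $G$, Definition~\ref{deCompatible}~(b) provides an $\cO_v$-ideal $\cA$ with $\sqrt{\cA}=\cM_v$ and $\cA\subseteq G$ [resp. $1+\cA\subseteq G$]. Because $q\cdot 1_K\in\cM_v=\sqrt{\cA}$, there is some $n\in\N$ with $q^n\cdot 1_K=(q\cdot 1_K)^n\in\cA$. As $\cA$ is an $\cO_v$-ideal and $\cM_v\subseteq\cO_v$, multiplication gives $q^n\cdot\cM_v\subseteq q^n\cdot\cO_v\subseteq\cA$, hence $q^n\cdot\cM_v\subseteq G$ [resp. $1+q^n\cdot\cM_v\subseteq 1+\cA\subseteq G$], which is the assertion.

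I do not anticipate a genuine obstacle; the only care needed is the bookkeeping between $\ch(K)$ and the residue characteristic $q$ — handled by the case split above — and the recognition that weak compatibility is exactly what converts the membership $q\cdot 1_K\in\cM_v$ into $q^n\cdot 1_K\in\cA$ for a suitable $n$, after which the ideal property of $\cA$ finishes the argument.
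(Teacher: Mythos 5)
Your proof is correct and follows essentially the same route as the paper: both arguments take the ideal $\cA$ with $\sqrt{\cA}=\cM_v$ and $\cA\subseteq G$ [resp. $1+\cA\subseteq G$] from weak compatibility, note $q\in\cM_v$ to get $q^n\in\cA$, and conclude $q^n\cdot\cM_v\subseteq\cA$. The only cosmetic differences are that you settle the degenerate cases ($q=0$ or $\ch(K)=q$) explicitly and use the ideal property $q^n\cdot\cO_v\subseteq\cA$ directly where the paper invokes Remark~\ref{remfracIdiff}~(a); both are fine.
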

\begin{proof}
	Let $\cA$ be an $\cO_v$-ideal with $\cA\subseteq G$ $\left[\textrm{resp. }1+\cA\subseteq G\right]$ and $\sqrt{\cA}=\cM_v$.  As $q\in \cM_v$ there exists $n\in \mathbb{N}$ such that $q^n\in \cA$. Let $x\in q^n\cdot \cM_v$. Then $v(x)>v(q^n)$ and therefore by Remark~\ref{remfracIdiff}~(a) $x\in \cA$. Hence $q^n\cdot \cM_v\subseteq G$ $\left[\textrm{resp. }1+q^n\cdot \cM_v\subseteq 1+\cA\subseteq G\right]$. 
\end{proof}

\begin{lem}\label{lemCoarselyCompatibleComparable}
	Let $G$ be a subgroup  of a field $K$.
	Then any two coarsely compatible  valuation rings are comparable.
\end{lem}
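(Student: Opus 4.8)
The plan is to argue by contradiction: suppose $\cO_1$ and $\cO_2$ are coarsely compatible with $G$ but non-comparable. Then they have a finest common coarsening $\cO$ (with maximal ideal $\cM$), which is a \emph{proper} coarsening of each of them. The strategy is to produce, from the weak compatibility data, ideals to which Lemma~\ref{lemNonComparableRings} applies, and then exploit the resulting decomposition $\cO^\times = (1+\cA_1)\cdot(1+\cA_2)$ to conclude $\cO^\times \subseteq G$, contradicting coarse compatibility of $\cO_1$ (or $\cO_2$), since $\cO$ is a proper coarsening.

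First I would unwind coarse compatibility: each $\cO_i$ is weakly compatible with $G$, so there is an $\cO_i$-ideal $\cA_i$ with $\sqrt{\cA_i}=\cM_i$ and $\cA_i\subseteq G$ [resp.\ $1+\cA_i\subseteq G$]. The key point is that $\cM\subsetneq \cA_i$: this is exactly Lemma~\ref{lemMsubsetneqintersection} applied to $\cO_i\subsetneq \cO$ (the finest common coarsening is a proper coarsening of $\cO_i$ precisely because $\cO_1,\cO_2$ are non-comparable). Now Lemma~\ref{lemNonComparableRings} gives $\cO = \cA_1+\cA_2$ and $\cO^\times = (1+\cA_1)\cdot(1+\cA_2)$.

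In the multiplicative case this finishes things immediately: $1+\cA_i\subseteq G$ for $i=1,2$, hence $\cO^\times = (1+\cA_1)\cdot(1+\cA_2)\subseteq G$, so $\cO$ is a proper coarsening of $\cO_1$ with $\cO^\times\subseteq G$, contradicting that $\cO_1$ is coarsely compatible. In the additive case one needs a small extra step: from $\cA_i\subseteq G$ and $\cO=\cA_1+\cA_2$ we get $\cO\subseteq G$ (as $G$ is an additive subgroup, it is closed under sums), so in particular $\cO^\times\subseteq G$, and again $\cO$ is a proper coarsening of $\cO_1$ violating coarse compatibility. Either way we reach the desired contradiction.

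The main obstacle — really the only delicate point — is justifying that the finest common coarsening $\cO$ of two non-comparable valuation rings is a \emph{proper} coarsening of each, so that Lemma~\ref{lemMsubsetneqintersection} is applicable; this is where non-comparability is used, and it follows since if $\cO=\cO_1$ then $\cO_1$ would be a coarsening of $\cO_2$, forcing comparability. Everything else is a direct assembly of Lemmas~\ref{lemMsubsetneqintersection} and~\ref{lemNonComparableRings} together with the definition of coarse compatibility; I do not expect any computational friction.
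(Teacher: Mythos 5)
Your proposal is correct and follows essentially the same route as the paper: contradiction via the finest common coarsening $\cO$, then Lemma~\ref{lemMsubsetneqintersection} to get $\cM\subsetneq\cA_i$, then Lemma~\ref{lemNonComparableRings} to conclude $\cO^\times\subseteq G$, contradicting coarse compatibility. Your explicit justification that $\cO$ is a \emph{proper} coarsening of each $\cO_i$ is a point the paper leaves implicit, but it is the same argument.
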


\begin{proof}
	Let $\cO_1$ and $\cO_2$ be two weakly compatible  valuation rings on $K$. For $i=1,2$ let $\cM_i$ be the maximal ideal of $\cO_i$ and  $\cA_i$  $\cO_{i}$-ideals with $\cA_i\subseteq G$ $\left[\textrm{resp. }1+\cA_i\subseteq G\right]$ and $\sqrt{\cA_i}=\cM_{i}$.  Suppose $\cO_1$ and $\cO_2$ are not comparable. Let $\cO$ be the finest common coarsening of $\cO_1$ and $\cO_2$. Let $\cM$ be the maximal ideal of $\cO$. 
	From Lemma~\ref{lemMsubsetneqintersection} follows that $\cM\subsetneq \cA_1$ and $\cM\subsetneq \cA_2$. By Lemma~\ref{lemNonComparableRings} we have 
	$\cO^{\times}\subseteq \cA_1+\cA_2\subseteq G$ $\left[\textrm{resp. }\cO^{\times}=\left(1+\cA_1\right)\cdot \left(1+\cA_2\right)\subseteq G\right]$. 
	Hence 
	by definition $\cO_1$ and $\cO_2$ are not coarsely compatible.
\end{proof}

Set $\cO_G:=\bigcap\left\{\cO\mid \cO\textrm{ coarsely compatible with }G\right\}$.
\begin{thm}\label{thmOG}
	\begin{enumerate}[(a)]
		\item $\cO_G$ is a valuation ring on $K$.
		\item $\cO_G$ is coarsely compatible.
	\end{enumerate}
\end{thm}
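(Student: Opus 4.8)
The plan is to prove the two statements of Theorem~\ref{thmOG} in order, using the comparability result of Lemma~\ref{lemCoarselyCompatibleComparable} as the main engine.

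For part (a): since \cO_G is an intersection of valuation rings on $K$, to show it is itself a valuation ring it suffices to show that the family $\left\{\cO\mid \cO\textrm{ coarsely compatible with }G\right\}$ is a chain under inclusion, for then its intersection is again a valuation ring (the intersection of a chain of valuation rings, all with the same fraction field $K$, is a valuation ring). Comparability of any two members of the family is exactly Lemma~\ref{lemCoarselyCompatibleComparable}. One subtlety to address first: the family could in principle be empty, in which case $\cO_G=K$, which is still (trivially) a valuation ring; so we should remark that $K$ itself is a valuation ring and the convention covers this degenerate case. Otherwise, for a nonempty chain $\{\cO_i\}$, the set $\bigcap_i\cO_i$ is a subring of $K$, and for any $x\in K^\times$ either $x\in\cO_i$ for all $i$ (so $x\in\bigcap\cO_i$) or $x^{-1}\in\cO_i$ for some $i$; but if $x\notin\cO_j$ for some $j$ then by the chain condition $\cO_j\subseteq\cO_i$ fails to force $x\in\cO_i$ only for those $\cO_i\supseteq\cO_j$, and for all $\cO_i\subseteq\cO_j$ we have $x^{-1}\in\cO_i$, while for $\cO_i\supseteq\cO_j$, since $x\notin\cO_j\subseteq\cO_i$ is false in general — here I would instead argue cleanly: order the chain, and observe that $x\notin\bigcap\cO_i$ means $x\notin\cO_j$ for some $j$, hence $x^{-1}\in\cM_j\subseteq\cO_i$ for every $\cO_i\supseteq\cO_j$, and for every $\cO_i\subseteq\cO_j$ we also get $x^{-1}\in\cO_i$ provided $x\notin\cO_i$; the point requiring care is that $x$ might lie in some smaller $\cO_i\subsetneq\cO_j$, which is impossible since smaller valuation rings have larger maximal ideals but are still subrings — in fact $x\notin\cO_j$ and $\cO_i\subseteq\cO_j$ give $x\notin\cO_i$ automatically. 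So $x^{-1}\in\cO_i$ for all $i$, i.e. $x^{-1}\in\cO_G$. This is the routine verification I would compress.

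For part (b): we must show $\cO_G$ is coarsely compatible, i.e. (i) $\cO_G$ is weakly compatible with $G$, and (ii) no proper coarsening $\widetilde{\cO_G}$ of $\cO_G$ satisfies $\widetilde{\cO_G}^\times\subseteq G$. For (ii), note that any proper coarsening $\widetilde{\cO}$ of $\cO_G$ is a proper coarsening of some $\cO$ in the defining family (indeed of every member contained in it, and there is at least one since $\cO_G$ is the intersection of the chain); more precisely, if $\widetilde{\cO}\supsetneq\cO_G=\bigcap\cO_i$, then by the chain structure $\widetilde{\cO}\supseteq\cO_i$ for some $i$, and if $\widetilde{\cO}^\times\subseteq G$ that would contradict coarse compatibility of that $\cO_i$ (or, if $\widetilde{\cO}=\cO_i$, contradict that $\cO_i\neq\cO_G$ unless the family is a single ring — handle that edge case separately). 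So the heart of the proof, and the step I expect to be the main obstacle, is (i): showing $\cO_G$ itself is weakly compatible, i.e. producing an $\cO_G$-ideal $\cA$ with $\sqrt{\cA}=\cM_G$ and $\cA\subseteq G$ $\left[1+\cA\subseteq G\right]$.

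For step (i) I would proceed as follows. If the defining family is empty, $\cO_G=K$ is trivially (vacuously) weakly compatible. Otherwise, if the family has a smallest element $\cO_0$, then $\cO_G=\cO_0$ is weakly compatible by hypothesis and we are done. The genuinely new case is when the chain has no smallest element, so $\cO_G=\bigcap_i\cO_i$ is a proper intersection with $\cM_G=\bigcup_i\cM_i$. Here I would use Lemma~\ref{lemWeaklyCompthen1+pnMvsubsetT}: writing $q=\ch(\cO_G/\cM_G)$, for each $\cO_i$ weak compatibility gives $n_i$ with $q^{n_i}\cM_i\subseteq G$ $\left[1+q^{n_i}\cM_i\subseteq G\right]$; if $q=0$ this already says $\cM_i\subseteq G$, and taking the union over $i$ gives $\cM_G\subseteq G$, so $\cO_G$ is even compatible. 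If $q>0$, then $q\cdot\cM_i\subseteq q^{n_i}\cdot\cO_i\cdot\cM_i\subseteq$ — here I would instead argue that $q^{n_i}\cM_i\supseteq q^N\cM_i$ for $N\geq n_i$, and since $q\in\cM_G$ we have $q\cdot\cM_G=\bigcup_i q\cdot\cM_i$ and each $q\cdot\cM_i$ lies in some $q^{n_i}\cdot\cM_i\subseteq G$ once we absorb, giving $q\cdot\cM_G\subseteq G$ $\left[1+q\cdot\cM_G\subseteq G\right]$; then $\cA:=q\cdot\cM_G$ is an $\cO_G$-ideal with $\cA\subseteq G$, and $\sqrt{\cA}=\cM_G$ because $q\in\cM_G$ forces any $x\in\cM_G$ to satisfy $x^m\in q\cdot\cM_G$ for suitable $m$ (using $v_G(q)>0$ and $v_G(x)>0$, so $m\cdot v_G(x)\geq v_G(q)+v_G(x)$ for large $m$, hence $x^m\in q\cdot\cM_G$ by Remark~\ref{remfracIdiff}~(a)). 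That establishes weak compatibility of $\cO_G$, completing part (b). The delicate point throughout is the bookkeeping between the possibly-infinite chain, the union $\cM_G=\bigcup\cM_i$, and making sure the single ideal $\cA$ works uniformly; I would state the no-smallest-element case carefully and lean on Lemma~\ref{lemWeaklyCompthen1+pnMvsubsetT} to reduce everything to the fixed ideal $q\cdot\cM_G$.
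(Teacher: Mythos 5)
Your outline for (a) and for the ``no proper coarsening with unit group inside $G$'' half of (b) matches the paper's argument (the paper likewise reduces (a) to Lemma~\ref{lemCoarselyCompatibleComparable} and derives the coarsening condition from coarse compatibility of the members of the defining family, exactly as you do). The gap is in the step you yourself identify as the heart of the matter: weak compatibility of $\cO_G$ when the family of coarsely compatible rings has no smallest element. Two things go wrong there. First, Lemma~\ref{lemWeaklyCompthen1+pnMvsubsetT} is stated and proved for $q=\ch\left(\cO_i/\cM_i\right)>0$ (its proof needs $q\in\cM_i$); if the residue characteristic is $0$ it gives nothing, and in particular it does not say $\cM_i\subseteq G$ --- weak compatibility does not imply compatibility in residue characteristic $0$ (take $K=k((t))$, $\ch(k)=0$, $\cA=t^2\cdot k[[t]]\subseteq G$ with $t\notin G$). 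Second, and more seriously, the ``absorption'' step $q\cdot\cM_i\subseteq q^{n_j}\cdot\cM_j$ goes the wrong way: from $q^{n_i}\cdot\cM_i\subseteq G$ with $n_i\geq 2$ one cannot conclude $q\cdot\cM_i\subseteq G$, since $q\cdot\cM_i\supseteq q^{n_i}\cdot\cM_i$ is the \emph{larger} ideal. The exponents $n_i$ are not uniform over the chain, so no single ideal of the form $q^N\cdot\cM_G$ is produced by this route.

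The correct repair of your no-minimum case is simpler and does not involve residue characteristic at all: for each $\cO_i$ in the chain choose $\cO_j\subsetneq\cO_i$; Lemma~\ref{lemMsubsetneqintersection} applied to $\cO_j\subsetneq\cO_i$ and the witness ideal $\cA_j$ of $\cO_j$ gives $\cM_i\subsetneq\cA_j\subseteq G$ $\left[\textrm{resp. }1+\cM_i\subseteq 1+\cA_j\subseteq G\right]$, so $\cM_G=\bigcup_i\cM_i\subseteq G$ and $\cO_G$ is even compatible in this case. The paper avoids your case distinction altogether: it sets $\cA_G:=\bigcup\left\{\cA_\cO\mid\cO\textrm{ coarsely compatible}\right\}$, uses comparability to show the witness ideals are nested (hence $\cA_G$ is an $\cO_G$-ideal), and checks $\sqrt{\cA_G}=\cM_G$ and $\cA_G\subseteq G$ $\left[\textrm{resp. }1+\cA_G\subseteq G\right]$ directly. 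Either fix works; as written, though, your weak-compatibility argument does not go through.
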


\begin{proof}
	\begin{enumerate}[(a)]
		\item This follows from Lemma~\ref{lemCoarselyCompatibleComparable}.
		\item Let  $\mathcal{C}:=\left\{\cO\mid \cO\textrm{ coarsely compatible with }G\right\}$. For every $\cO \in \mathcal{C}$ let $\cM_\cO$ be the maximal ideal of $\cO$ and let $\cA_\cO$ be an $\cO$-ideal with $\sqrt{\cA_\cO}=\cM_\cO$ and $\cA_\cO\subseteq G$ $\left[\textrm{resp. }1+\cA_\cO\subseteq G\right]$. Define 
		$\cA_G:=\bigcup\left\{\cA_\cO\mid \cO\in \mathcal{C}\right\}.$ Let $\cM_G$ be the maximal ideal of $\cO_G$. 
		
		Let $a,b\in \cA_G$ and $x\in \cO_G$. 
		There exist $\cO_1, \cO_2\in \mathcal{C}$ such that $a\in\cA_{\cO_1}=:\cA_1$ and $b\in\cA_{\cO_2} =:\cA_2$. 
		By Lemma~\ref{lemCoarselyCompatibleComparable}  let without loss of generality $\cO_1\subseteq \cO_2$. 
		Then $\cA_2\subseteq \cA_1$ and therefore $a,b\in \cA_1$. As $\cA_1$ is an ideal $a+b\in \cA_1\subseteq \cA_G$. 
		Further $x\in \cO_G$ and therefore  $x\in \cO_1$. Therefore $x\cdot a\in \cA_1\subseteq \cA_G$.
		For every valuation $ \cO\in \mathcal{C}$   $\cA_\cO\subseteq \cM_{\cO}\subseteq \cM_G$. Hence $\cA_G\subseteq \cM_G$ and thus $\sqrt{\cA_G}\subseteq \cM_G$. 
		
		On the other hand let $x\in \cM_G$. It is easy to see that there exists $\cO\in \mathcal{C}$  such that $x\in \cM_\cO=\sqrt{\cA_\cO}$. Therefore there exists an $n\in \mathbb{N}$ such that $x^n\in \cA_\cO\subseteq \cA_G$ and hence $x\in \sqrt{\cA_G}$. Therefore $\cM_G\subseteq \sqrt{\cA_G}$. 
		As  $\cA_{\cO}\subseteq G$ 	$\left[\textrm{resp. }1+\cA_{\cO}\subseteq G\right]$ for every $\cO\in \mathcal{C}$  we have $\cA_G\subseteq G$ $\left[\textrm{resp. }1+\cA_G\subseteq G\right]$.
		Hence $\cO_G$ is  weakly compatible. 
		
		Assume $\cO_G$ is not coarsely compatible. Let $\cO$ be a valuation ring such that $\cO_G\subsetneq \cO$ and $\cO^\times \subseteq G$. Without loss of generality let  $\cO$ be coarsely compatible. 
		Let $x\in \cO\setminus \cO_G$. Then there exists a valuation ring $\widetilde{\cO}\in \mathcal{C}$  with $x\notin \widetilde{\cO}$. By Lemma~\ref{lemCoarselyCompatibleComparable} $\widetilde{\cO}$ and $\cO$ are comparable. As $x\in \cO\setminus \widetilde{\cO}$ we have $\widetilde{\cO}\subsetneq \cO$. But this is contradicts $\widetilde{\cO}$  coarsely compatible. 
		This shows that $\cO_G$ is coarsely compatible. 
	\end{enumerate}
\end{proof}

\begin{de}
	We call $\cO_G$ the valuation ring induced by $G$.
\end{de}

In the whole paper 	let $\cM_G$  denote the maximal ideal of $\cO_G$ and let $v_G$ be a valuation with $\cO_{v_G}=\cO_G$.

\begin{thm}\label{corCaseDistinction}
	For any  subgroup $G$ of a field $K$ one of the following cases holds:
	\begin{description}
		\item [group case] \emph{There is a valuation ring $\cO$ with $\cO^\times\subseteq G$.}
		
		In this case $\cO_G$ is the only  coarsely compatible valuation ring with this property.
		All weakly compatible valuations  are compatible.
		\item [weak case] \emph{There exists a weakly compatible valuation ring which is not compatible.} 
		
		In this case $\cO_G$ is the only valuation ring with this property.
		\item [residue case] \emph{All weakly compatible valuations are compatible and there is no valuation ring $\cO$ with $\cO^{\times}\subseteq G$.}
		
		In this case $\cO_G$ is the finest compatible valuation ring. 
	\end{description}
\end{thm}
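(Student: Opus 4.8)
The plan is to show first that the three cases are exhaustive and mutually exclusive — which is immediate from their very formulation: the group case is characterized by the existence of a valuation ring $\cO$ with $\cO^\times\subseteq G$; the weak case is characterized by the negation of this together with the existence of a weakly compatible non-compatible valuation ring; and the residue case is the conjunction of both negations. So the real content is the three ``In this case \ldots'' assertions about $\cO_G$, and I would treat them one at a time, in each case using Theorem~\ref{thmOG} (that $\cO_G$ is a coarsely compatible valuation ring) together with Lemma~\ref{lemCoarselyCompatibleComparable} (all coarsely compatible valuation rings are comparable).

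\textbf{Group case.} Suppose $\cO^\times\subseteq G$ for some valuation ring $\cO$. First I would pass to a coarsely compatible such ring: note $\cO$ is automatically weakly compatible (take $\cA=\cM$, so $\cA\subseteq G$ resp. $1+\cA\subseteq\cM\subseteq\cO^\times\subseteq G$ using the Remark after Definition~\ref{deCompatible}), and if $\cO$ is not coarsely compatible pick a proper coarsening $\widetilde\cO$ with $\widetilde\cO^\times\subseteq G$; iterating (or a Zorn-type argument on the chain of coarsenings, all of which still satisfy $(\,\cdot\,)^\times\subseteq G$) one reaches a coarsely compatible $\cO'$ with $\cO'^\times\subseteq G$. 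Since $\cO'$ is then one of the rings defining $\cO_G$, we get $\cO_G\subseteq\cO'$. Conversely $\cO_G$ is coarsely compatible by Theorem~\ref{thmOG}, hence comparable to $\cO'$ by Lemma~\ref{lemCoarselyCompatibleComparable}; if $\cO_G\subsetneq\cO'$ this would make $\cO'$ a proper coarsening of $\cO_G$ — wait, the inclusion is the other way, so instead: if $\cO_G\subsetneq\cO'$ then $\cO'$ would be a proper \emph{coarsening}... no. The correct dichotomy is $\cO_G\subseteq\cO'$ or $\cO'\subseteq\cO_G$; we already have $\cO_G\subseteq\cO'$, and if it were proper, then $\cO_G\subsetneq\cO'$ with $\cO'^\times\subseteq G$ contradicts $\cO_G$ being coarsely compatible (no proper coarsening $\widetilde\cO$ of $\cO_G$ with $\widetilde\cO^\times\subseteq G$ — here $\cO'$ is a proper \emph{coarsening}? again $\cO_G\subsetneq\cO'$ means $\cO'$ is coarser). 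So $\cO'\not\supsetneq\cO_G$, forcing $\cO_G=\cO'$; uniqueness of a coarsely compatible ring with this property follows the same way. Finally, if $v$ is weakly compatible, then $\cO_v$ and $\cO_G$ are comparable; the case $\cO_v\subsetneq\cO_G$ is excluded as above, so $\cO_G\subseteq\cO_v$, whence $\cM_v\subseteq\cM_G\subseteq\cO_G^\times\subseteq G$ (resp. $1+\cM_v\subseteq 1+\cM_G\subseteq G$), i.e. $\cO_v$ is compatible.

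\textbf{Weak case.} Let $\cO_v$ be weakly compatible but not compatible. Then $\cO_v$ is automatically coarsely compatible: a proper coarsening $\widetilde\cO$ with $\widetilde\cO^\times\subseteq G$ would put us in the group case, contradicting the hypothesis defining the weak case. Hence $\cO_v$ is one of the rings defining $\cO_G$, so $\cO_G\subseteq\cO_v$. If this inclusion were proper, then $\cM_G\subsetneq\cM_v$ (proper, since the rings differ), and since $\cO_v$ is not compatible there is $m\in\cM_v\setminus G$ (resp. $m$ with $1+m\notin G$); but the weak compatibility of $\cO_G$ gives an $\cO_G$-ideal $\cA_G$ with $\sqrt{\cA_G}=\cM_G$ and $\cA_G\subseteq G$ — and I would like to use Lemma~\ref{lemMsubsetneqintersection}-style reasoning, or rather: since $\cM_G\subsetneq\cM_v$, the ideal $\cA_G$ with radical $\cM_G$ satisfies $\cM_v\not\subseteq\cM_G=\sqrt{\cA_G}$, so... the cleanest route is to note that $\cO_G$ being coarsely compatible and $\cO_v$ weakly compatible, comparability gives $\cO_G\subseteq\cO_v$; were it proper, apply Lemma~\ref{lemMsubsetneqintersection} with $\cO_2=\cO_G\subsetneq\cO_1=\cO_v$ and $\cA=\cA_G$ (radical $\cM_G$) to get $\cM_v\subsetneq\cA_G\subseteq G$ (resp. $1+\cM_v\subseteq 1+\cA_G\subseteq G$), contradicting $\cO_v$ not compatible. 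So $\cO_G=\cO_v$, and the same argument against any other weakly-but-not-compatible $\cO_{v'}$ shows uniqueness.

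\textbf{Residue case.} Now every weakly compatible valuation ring is compatible, and there is no $\cO$ with $\cO^\times\subseteq G$. By Theorem~\ref{thmOG}, $\cO_G$ is coarsely compatible, in particular weakly compatible, hence compatible by hypothesis. It remains to show $\cO_G$ is the \emph{finest} compatible valuation ring, i.e. every compatible $\cO_v$ satisfies $\cO_G\subseteq\cO_v$. A compatible ring is weakly compatible, hence coarsely compatible (a proper coarsening $\widetilde\cO$ with $\widetilde\cO^\times\subseteq G$ cannot exist in the residue case), hence it is one of the rings whose intersection is $\cO_G$, giving $\cO_G\subseteq\cO_v$. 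This is exactly the assertion.

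\textbf{Main obstacle.} The only genuinely delicate point is the ``uniqueness/finest'' half of each case — ruling out a \emph{proper} inclusion $\cO_G\subsetneq\cO_v$ for the relevant $\cO_v$. In the group case this is handled directly by the definition of coarse compatibility; in the weak case it is where Lemma~\ref{lemMsubsetneqintersection} does the real work, converting a proper strict inclusion of valuation rings together with an $\cA_G$ of radical $\cM_G$ into the containment $\cM_v\subsetneq\cA_G$, which contradicts non-compatibility of $\cO_v$. Everything else is bookkeeping with comparability and the definitions.
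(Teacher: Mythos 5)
Your overall architecture --- Theorem~\ref{thmOG} plus comparability, a Zorn argument to produce a maximal coarsening with unit group in $G$ in the group case, Lemma~\ref{lemMsubsetneqintersection} in the weak case, and the identification of coarsely compatible with compatible in the residue case --- is the paper's proof in all essentials. (The paper realises your maximal ring directly as $\bigcup\left\{\cO\mid \cO^\times\subseteq G\right\}$ and uses Lemma~\ref{lemNonComparableRings} to see that this union is a ring; your Zorn argument on the chain of coarsenings of a fixed $\cO$ is an acceptable substitute.) The weak case and the residue case are correct as written.

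The group-case argument that every weakly compatible $\cO_v$ is compatible contains a genuine error, though. The case $\cO_v\subsetneq \cO_G$ is \emph{not} ``excluded as above'': coarse compatibility of $\cO_G$ forbids a proper \emph{coarsening} of $\cO_G$ with unit group in $G$, whereas here $\cO_v$ would be a proper \emph{refinement}, and weakly compatible proper refinements of $\cO_G$ do exist in general (any refinement $\cO_v\subseteq\cO_G$ satisfies $\cO_v^\times\subseteq\cO_G^\times\subseteq G$, hence is compatible and in particular weakly compatible; such refinements occur whenever $\cO_G/\cM_G$ carries a non-trivial valuation). The repair is immediate --- in that subcase $\cO_v^\times\subseteq G$ already gives compatibility by the Remark following Definition~\ref{deCompatible} --- and this is exactly how the paper argues, splitting on whether $\cO_v^\times\subseteq G$ rather than on the direction of the inclusion. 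Two smaller points: the chain $\cM_v\subseteq\cM_G\subseteq\cO_G^\times$ is false, since $\cM_G\cap\cO_G^\times=\emptyset$; in the additive case the correct chain is $\cM_v\subseteq\cM_G\subseteq\cO_G\subseteq G$. And the comparability of a merely \emph{weakly} compatible $\cO_v$ with $\cO_G$ is not literally the statement of Lemma~\ref{lemCoarselyCompatibleComparable}, which compares two \emph{coarsely} compatible rings; you need its proof, which shows that two non-comparable weakly compatible rings admit a common proper coarsening whose unit group lies in $G$, and this contradicts the coarse compatibility of $\cO_G$.
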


\begin{proof}
	\begin{description}
		\item [group case] Let $\cO$ be a valuation ring  with $\cO^\times\subseteq G$.
		Let\\
		$\widetilde{\cO}:=\bigcup \left\{\cO\mid \cO\textrm{ valuation ring such that }  \cO^{\times}\subseteq G\right\}$.
		Let $x, y\in \widetilde{\cO}$. Then there exist $\cO_1, \, \cO_2\in\left\{\cO\mid  \cO^{\times}\subseteq G\right\} $ such that $x\in \cO_1$ and $y\in \cO_2$. If $\cO_1$ and $\cO_2$ are comparable $x+y,\,x\cdot y\in \widetilde{\cO}$ is clear.
		Otherwise let $\cO$ be the finest common coarsening of $\cO_1$ and $\cO_2$. By Lemma~\ref{lemNonComparableRings},   $\cO=\cM_1+\cM_2\subseteq G$ $\left[\textrm{resp. }\cO^\times=\left(1+\cM_1\right)\cdot\left(1+\cM_2\right)\subseteq G\right]$.
		As $x, y\in \cO$ we have $x+y, x\cdot y\in \cO$ and therefore $x+y, x\cdot y\in \widetilde{\cO}$. 
		Further if $x\in \widetilde{\cO}$ then $x\in \cO$ for some valuation ring $\cO$ such that $\cO^\times\subseteq G$. Hence $-x\in \cO\subseteq \widetilde{\cO}$. 
		Hence $\widetilde{\cO}$ is a ring.
		By assumption it is clear that $\widetilde{\cO}$ is a valuation ring. 
		
		Now let $x\in \widetilde{\cO}^\times$.  As above we can find a valuation ring $\cO$ such that $x,\, x^{-1}\in \cO$ and $\cO^\times\subseteq G$. Hence $\widetilde{\cO}^\times \subseteq G$.
		Further by  definition, $\widetilde{\cO}$ is coarsely compatible.
		Hence $\cO_G\subseteq \widetilde{\cO}$. As $\cO_G$ is by Theorem~\ref{thmOG}~(b) coarsely compatible, it follows that $\cO_G=\widetilde{\cO}$. 
		In particular $\cO_G$ is compatible. 
		
		By Lemma~\ref{lemCoarselyCompatibleComparable} follows that there can be at most one coarsely compatible valuation ring $\cO$ with $\cO^\times\subseteq G$.  
		
		Let $\cO$ be weakly compatible.
		
		If $\cO^{\times}\subseteq G$ then $\cO$ is compatible. 
		
		If $\cO^\times \not \subseteq G$  we have $\cO_G\subsetneq \cO$. Hence $\cM\subseteq \cM_G$ and therefore  $\cO$ is compatible.
		
		\item [weak case]Let  $\cO$ be 
		weakly compatible but not compatible. 
		
		By the group case $\cO^\times\not\subseteq G$. Hence $\cO$ is coarsely compatible and therefore $\cO_G\subseteq \cO$.
		From Lemma~\ref{lemMsubsetneqintersection} follows $\cO_G=\cO$ as otherwise $\cO$ would be compatible. 
		
		\item[residue case]$\cO_G$ is the finest coarsely compatible valuation ring. By assumption in the residue case the coarsely compatible valuation rings are exactly the compatible valuation rings.  
	\end{description}	
\end{proof}

In the group case the $\cO_G^{\times}$, and in the additive case even $\cO_G$, is contained in the subgroup. In the residue case $G$ induces a proper subgroup on the residue field $\cO_G/\cM_G$. In Section~\ref{secDefinabilityASandPowers}, when proving the definability of $\cO_G$ under certain conditions, in the residue case for part of the proof we will be working in the residue field. The name weak case does not need any further motivation.


\section{Criteria for the Non-Triviality of $\cO_G$}
\label{chNonTriviality}

In the whole section let $G\subseteq K$ $\left[\textrm{resp. }G\subseteq K^\times\right]$ be a  subgroup of  $K$.

The valuation ring $\cO_G$, that we have defined in the last section, is not necessarily non-trivial. In this section we will give criteria under which $\cO_G$ is non-trivial. In particular we will show that we can express the non-triviality of $\cO_G$ in a suitable first order language.  

\begin{lem}\label{lemNonTrivialIFF}
	$\cO_G$ is non-trivial if and only if $G\neq K$ $\left[\textrm{resp. }G\neq K^{\times}\right]$ and there exists a non-trivial  weakly compatible valuation.
\end{lem}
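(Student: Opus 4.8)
The statement to prove is an equivalence, so I would split it into the two implications. For the forward direction, assume $\cO_G$ is non-trivial. Then $\cO_G$ itself is a non-trivial valuation ring which, by Theorem~\ref{thmOG}~(b), is coarsely compatible, hence in particular weakly compatible; this gives the existence of a non-trivial weakly compatible valuation immediately. It remains to see that $G\neq K$ $\left[\textrm{resp. }G\neq K^\times\right]$. Here I would argue by contraposition within this direction: if $G=K$ $\left[\textrm{resp. }G=K^\times\right]$, then every valuation ring $\cO$ on $K$ trivially satisfies $\cO^\times\subseteq G$, so the trivial valuation ring $K$ is one of the valuation rings $\cO$ with $\cO^\times\subseteq G$; more to the point, for \emph{any} valuation ring $\cO$ we have $\cO^\times\subseteq K = G$, so no proper coarsening condition can fail and the only coarsely compatible valuation ring is $K$ itself (any weakly compatible $\cO$ has $\cO^\times\subseteq G$ and hence, unless $\cO=K$, admits the proper coarsening $K$ with $K^\times\subseteq G$). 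Thus $\cO_G=\bigcap\{\cO\mid\cO\text{ coarsely compatible}\}=K$ is trivial, contradicting the hypothesis. So $G\neq K$ $\left[\textrm{resp. }G\neq K^\times\right]$.

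For the converse, assume $G\neq K$ $\left[\textrm{resp. }G\neq K^\times\right]$ and that there exists a non-trivial weakly compatible valuation, say with valuation ring $\cO$. If $\cO$ is already coarsely compatible, then $\cO_G\subseteq\cO\subsetneq K$, so $\cO_G$ is non-trivial and we are done. If $\cO$ is weakly compatible but not coarsely compatible, then by Definition~\ref{deCompatible}~(c) there is a proper coarsening $\widetilde{\cO}\supsetneq\cO$ with $\widetilde{\cO}^\times\subseteq G$; among all such coarsenings I would like to pass to a maximal one, or at least to \emph{some} coarsely compatible valuation ring containing $\cO$. The union of a chain of valuation rings with $\cO^\times\subseteq G$ is again a valuation ring with that property (this is exactly the argument already carried out for $\widetilde{\cO}$ in the proof of the group case of Theorem~\ref{corCaseDistinction}), so by Zorn's Lemma there is a maximal valuation ring $\cO'$ with $\cO\subseteq\cO'$ and $\cO'^\times\subseteq G$; this $\cO'$ is coarsely compatible. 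Now the key point is $\cO'\neq K$: since $G\neq K$ $\left[\textrm{resp. }G\neq K^\times\right]$, we have $K^\times\not\subseteq G$ (in the additive case $K^\times\subseteq G$ would force $K=G$ as $0\in G$ and $G$ is closed under addition; in the multiplicative case this is immediate), so $K$ itself does not satisfy the defining condition $\cO^\times\subseteq G$, hence $\cO'\subsetneq K$. Therefore $\cO_G\subseteq\cO'\subsetneq K$ is non-trivial.

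The main obstacle is the converse direction, specifically producing a \emph{coarsely} compatible valuation ring out of a merely weakly compatible one and then certifying that it is proper. The Zorn's Lemma step needs the chain-union fact, which is essentially recorded already in the proof of Theorem~\ref{corCaseDistinction}; the properness then hinges on translating $G\neq K$ $\left[\textrm{resp. }G\neq K^\times\right]$ into $K^\times\not\subseteq G$, which is where the bracketed additive/multiplicative distinction genuinely matters and must be spelled out. Everything else is bookkeeping with the definitions of compatibility and the containment $\cO_G\subseteq\cO$ for coarsely compatible $\cO$.
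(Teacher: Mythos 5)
Your proof is correct. The forward direction is essentially the paper's: $\cO_G$ itself witnesses the existence of a non-trivial weakly compatible valuation (via Theorem~\ref{thmOG}), and if $G=K$ $\left[\textrm{resp. }G=K^\times\right]$ then every valuation ring $\cO$ satisfies $\cO^\times\subseteq G$, so only the trivial ring is coarsely compatible and $\cO_G=K$. Where you diverge is the converse. The paper runs the trichotomy of Theorem~\ref{corCaseDistinction}: in the group case $\cO_G\subseteq G\subsetneq K$ $\left[\textrm{resp. }\cO_G^\times\subseteq G\subsetneq K^\times\right]$; in the weak case $\cO_G$ is the non-compatible weakly compatible ring, so $\cM_G\neq\{0\}$; in the residue case the given $\cO$ is automatically coarsely compatible (no ring has $\cO^\times\subseteq G$ there), so $\cO_G\subseteq\cO\subsetneq K$. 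You instead bypass the case distinction and argue directly from Definition~\ref{deCompatible}~(c): either the given weakly compatible $\cO$ is already coarsely compatible, or you pass to the maximal coarsening $\cO'$ with $\cO'^\times\subseteq G$ (which exists by the chain-union argument already used for $\widetilde{\cO}$ in the group case of Theorem~\ref{corCaseDistinction}, and is weakly compatible by the remark following Definition~\ref{deCompatible}), and certify $\cO'\subsetneq K$ from $K^\times\not\subseteq G$. Both arguments are sound; yours is more self-contained and makes explicit the translation of $G\neq K$ into $K^\times\not\subseteq G$ in the additive case, while the paper's is shorter given that Theorem~\ref{corCaseDistinction} has already done the structural work.
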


\begin{proof}
	Assume that  $G\neq K$ $\left[\textrm{resp. }G\neq K^{\times}\right]$ and $\cO$ is a non-trivial weakly compatible valuation ring.
	
	If we are in the group case we have $\cO_G\subseteq G\subsetneq K$ $\left[\textrm{resp. }\cO_G^{\times} \subseteq G\subsetneq K^{\times}\right]$ and therefore $\cO_G$ non-trivial.
	
	If we are in the weak case  $\cM_G\not \subseteq G$ $\left[\textrm{resp. }1+\cM_G\not\subseteq G\right]$. Hence $\cM_G\neq \left\{0\right\}$ and thus $\cO_G$ is non-trivial.
	
	In the residue case  we have $\cO_G\subseteq \cO\subsetneq K$ and hence $\cO_G$ is non-trivial. 
	
	Conversely assume $\cO_G\subsetneq K$ is non-trivial. 
	Then $\cO_G$ is a non-trivial weakly compatible valuation ring.
	
	Further suppose $G= K$ $\left[\textrm{resp. }G= K^{\times}\right]$. For the trivial valuation $\cO_{tr}=K$ we have $\cO_{tr}^\times \subseteq G$. Therefore  no non-trivial valuation can be coarsely compatible.
\end{proof}

\begin{de}\label{decTG}
	We denote the coarsest topology for which $G$ is open and for which M\"obius transformations [resp. linear transformations] are continuous, by $\cT_G$. We call $\cT_G$ the topology induced by $G$.
\end{de}

\begin{thm}\label{thmcTG} 
	Let
	\[
	\cS_G:=\left.\left\{\left\{\left.\frac{a\cdot x+b}{c\cdot x+d}\ \right|\  x\in G, c\cdot x\neq -d\right\}\ \right|\  a,b,c,d \in K, a\cdot d-b\cdot c\neq 0\right\}
	\]
	\[
	\left[\textrm{resp. }\cS_G:=\left\{\left.a\cdot G+b\ \right|\  a\in K^{\times}, b\in K\right\}\right].
	\] 
	Then $\cS_G$ is a subbase of this topology.
\end{thm}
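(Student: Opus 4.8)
The plan is to verify directly that $\cS_G$ is the subbase of a topology in which $G$ is open and all M\"obius transformations (resp. linear transformations) are continuous, and that this topology is coarsest with these properties. First I would observe that $\cS_G$ trivially covers $K$ (taking $a=d=1$, $b=c=0$ gives $G$ itself when $G$ is additive and contains the relevant point; in the multiplicative case $a\cdot G + b$ with $b$ ranging over $K$ already covers $K$, and in the M\"obius case the sets $\frac{a x + b}{c x + d}$ as the parameters vary cover $K$ since M\"obius maps are surjective onto $\mathbf{P}^1(K)$ minus a point), so $\cS_G$ generates \emph{some} topology $\cT$, namely the one whose open sets are arbitrary unions of finite intersections of members of $\cS_G$. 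So the content is the two-sided comparison: $\cT$ has the required properties, and any topology with the required properties refines $\cT$.

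For the first half, $G$ itself lies in $\cS_G$ (identity transformation), so $G$ is $\cT$-open. For continuity of the transformations: let $\varphi$ be a M\"obius transformation (resp. a linear map $x\mapsto \alpha x + \beta$). A subbasic open set is $S = \varphi_0(G)$ for some transformation $\varphi_0$; then $\varphi^{-1}(S) = (\varphi_0\circ\varphi^{-1})(G)$ is again of the form "image of $G$ under a transformation", hence lies in $\cS_G$, modulo the bookkeeping that the composition of two M\"obius transformations is again a M\"obius transformation with nonzero determinant (resp. the composition of two invertible affine maps is an invertible affine map), and that one must intersect with the domain where the map is defined — this is where a little care is needed, since $\varphi^{-1}$ may be undefined at one point and $\varphi_0\circ\varphi^{-1}$ at another, but removing finitely many points from a set of the form $\{\frac{a x+b}{c x+d}\mid x\in G\}$ again yields such a set after reparametrising (one excludes finitely many values of $x$, and a single excluded value $x_1$ corresponds to intersecting $G$ with $G\setminus\{x_1\}$, which is absorbed by shrinking $G$... but $G$ is fixed, so instead one notes the excluded points are handled by the defining condition $c\cdot x\neq -d$). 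I would state this carefully: the preimage of a subbasic set under a transformation is a finite intersection of subbasic sets, which suffices for subbasic continuity.

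For the second half, let $\cT'$ be any topology on $K$ in which $G$ is open and all M\"obius (resp. linear) transformations are continuous. Every member of $\cS_G$ has the form $\varphi(G)$ for a transformation $\varphi$; since $\varphi^{-1}$ is also such a transformation and is $\cT'$-continuous, $\varphi(G) = (\varphi^{-1})^{-1}(G)$ is $\cT'$-open (again up to the finitely-many-points caveat about domains, which only helps: $\varphi(G)$ differs from a genuine preimage by removing finitely many points, and a finite set intersected appropriately... actually $\varphi$ restricted to its domain is continuous with continuous inverse, so $\varphi(G\cap\mathrm{dom}\,\varphi)$ is open in the subspace, which is open in $K$). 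Hence $\cS_G\subseteq \cT'$, so $\cT = \cT_{\cS_G}\subseteq \cT'$, proving minimality.

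The main obstacle I expect is purely the careful treatment of the partial-definedness of M\"obius transformations in the multiplicative/M\"obius setting: a M\"obius transformation $x\mapsto \frac{ax+b}{cx+d}$ is not a bijection of $K$ but of $\mathbf{P}^1(K)$, so "continuous" and "image is open" need to be interpreted as statements about the restriction to the (cofinite) domain inside $K$. Once one fixes the convention that a M\"obius transformation means the corresponding partial map on $K$ and that continuity means continuity of this partial map on its domain (with the subspace topology), the composition computations and the comparison arguments above go through routinely; the additive case is strictly easier since linear transformations $x\mapsto \alpha x+\beta$ with $\alpha\neq 0$ are genuine homeomorphisms of $K$ for any topology making them continuous, and then $a\cdot G + b$ is literally the image of the open set $G$ under such a homeomorphism.
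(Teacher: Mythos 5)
Your proof is correct and follows essentially the same route as the paper's: $G\in\cS_G$ gives openness, closure of the class of M\"obius (resp.\ linear) transformations under composition and inversion gives continuity, and writing each element of $\cS_G$ as a preimage of $G$ under such a transformation gives minimality. (One harmless slip: the preimage of $\varphi_0(G)$ under $\varphi$ is $\left(\varphi^{-1}\circ\varphi_0\right)(G)$ rather than $\left(\varphi_0\circ\varphi^{-1}\right)(G)$ --- both are transformations of the right kind, so nothing breaks --- and your extra care with the domains of the partial M\"obius maps is more than the paper itself supplies.)
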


\begin{proof}
	As  $G\in \cS_G$  it is open in the topology induced by $\cS_G$.
	
	The inverse functions and compositions of a M\"obius transformations [resp. linear transformations] are again a M\"obius transformations [resp. linear transformations].  Hence M\"obius transformation [resp. linear transformation] are continuous in the topology induced by $\cS_G$.
	
	On the other hand every M\"obius transformation [resp. linear transformation] is the inverse function of a M\"obius transformation [resp. linear transformation] and therefore every element of $\cS_G$ is the preimage of $G$ under a M\"obius transformation [resp. linear transformation]. Hence there can be no coarser topology  for which $G$ is open and for which M\"obius transformations [resp. linear transformations] are continuous.
\end{proof}

We will denote  the topology induced by a valuation $v$ by $\cT_v$ and the topology induced by a valuation ring $\cO$ by $\cT_{\cO}$. 
We will examine the relation between $\cT_G$ and  $\cT_{\cO_G}$. 

\begin{cl}\label{clGopeninO}
	Let $v$ be weakly compatible. Then $G$ is open with respect to the topology $\cT_v$. 
\end{cl}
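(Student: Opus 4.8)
The statement to prove is Claim~\ref{clGopeninO}: if $v$ is weakly compatible with $G$, then $G$ is open with respect to $\cT_v$. The plan is to exhibit, around each point of $G$, a $\cT_v$-neighbourhood contained in $G$, using the $\cO_v$-ideal $\cA$ witnessing weak compatibility.

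First I would unpack the hypothesis: weak compatibility gives an $\cO_v$-ideal $\cA$ with $\sqrt{\cA}=\cM_v$ and $\cA\subseteq G$ [resp. $1+\cA\subseteq G$]. Since $\sqrt{\cA}=\cM_v$ and $\cA\neq\{0\}$, $\cA$ is itself a $\cT_v$-neighbourhood of $0$: by Remark~\ref{remfracIdiff}~(a), $\cA$ contains $\{x : v(x) > v(a)\}$ for any $0\neq a\in\cA$, and such a set is a basic $\cT_v$-open neighbourhood of $0$ (indeed $\cM_v$ and hence any larger ideal is $\cT_v$-open). So in both cases $\cA$ (additive) or $1+\cA$ (multiplicative) is an open neighbourhood of $0$ [resp. of $1$] sitting inside $G$.

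Next, the additive case is immediate: for $g\in G$, the translate $g+\cA\subseteq G+G=G$ (since $G$ is an additive subgroup), and $g+\cA$ is $\cT_v$-open because $\cT_v$ is a ring topology (translation is a homeomorphism). Hence every point of $G$ is interior, so $G$ is open. For the multiplicative case, let $g\in G$; then $g\cdot(1+\cA) = g + g\cA \subseteq G\cdot G = G$ since $g\in G$ and $1+\cA\subseteq G$. Again $g\cdot(1+\cA)$ is $\cT_v$-open since multiplication by the fixed nonzero element $g$ is a $\cT_v$-homeomorphism. So $G$ is open.

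The only genuine point requiring a little care — and what I would flag as the main (albeit minor) obstacle — is confirming that $\cA$ [resp. $1+\cA$] really is a $\cT_v$-neighbourhood of $0$ [resp. $1$], i.e. that an ideal with radical $\cM_v$ is $\cT_v$-open. This is exactly where $\sqrt{\cA}=\cM_v$ is used together with Remark~\ref{remfracIdiff}: pick $0\neq a\in\cA$; then $\{x\in K : v(x)>v(a)\}$ is a standard basic open zero-neighbourhood of $\cT_v$ and lies in $\cA$ by Remark~\ref{remfracIdiff}~(a). Everything else is a formal consequence of $\cT_v$ being a ring topology and $G$ being a subgroup, so the proof is short once this observation is in place.
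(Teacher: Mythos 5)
Your proposal is correct and follows essentially the same route as the paper: both extract from weak compatibility the ideal $\cA$ with $\sqrt{\cA}=\cM_v$, observe via Remark~\ref{remfracIdiff}~(a) that it contains a basic open zero-neighbourhood $\left\{x\mid v(x)>v(a)\right\}$, and then cover $G$ by the translates $g+\cA$ [resp. $g\cdot\left(1+\cA\right)$], which lie in $G$ because $G$ is a subgroup. The only cosmetic difference is that the paper works directly with the open subset $\cA'=\left\{x\mid v(x)>v(a)\right\}$ rather than with $\cA$ itself; this changes nothing of substance.
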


\begin{proof}
	Let $\cA$ be an $\cO_v$-ideal with $\cA\subseteq G$ $\left[\textrm{resp. }1+\cA\subseteq G\right]$ and $\sqrt{\cA}=\cM$. 
	Let $a\in \cA$. Then by Remark~\ref{remfracIdiff} $\cA':=\left.\left\{x\in K\  \right|\   v\left(x\right)>v\left(a\right)\right\}$ is an open subset of $\cA$. 
	
	If $G$ is an additive subgroup of $K$, then for every $x\in G$ as well $x+\cA'$ is open in $\cT_v$.
	As  $x+\cA'\subseteq x+\cA\subseteq x+G\subseteq G$ for all $x\in G$ and $0\in \cA'$, we have $G=\bigcup_{x\in G}\left(x+\cA'\right)$.
	
	If $G$ is a multiplicative subgroup of $K^\times$, $g\cdot \left(1+\cA'\right)\subseteq g\cdot \left(1+\cA\right)\subseteq G$ for all $g\in G$. 
	As $1\in 1+\cA'$ this implies $\displaystyle G=\bigcup_{g\in G} g\cdot\left(1+\cA'\right)$. 
	
	$\cA'$ is open in $\cT_v$ and therefore, as $\cT_v$ is a field topology, $G$ is open.
\end{proof}

\begin{prop}\label{propBasisofO}
	Assume $\cO$ is weakly compatible. 
	\begin{enumerate}[(a)]
		\item Let $G\subseteq K$ be an additive subgroup. 
		Then 
		$\cS_G
		$
		is a basis of $\cT_\cO$.
		\item Let $G\subseteq K^\times$ be a multiplicative subgroup.
		Then \\
		$\left\{\left(a_1\cdot G+b_1\right)\cap \left(a_2\cdot G+b_2\right)\mid a_1,\,a_2\in K^\times,\ b_1,\,b_2\in K\right\}$
		is a basis of $\cT_\cO$.
	\end{enumerate}
\end{prop}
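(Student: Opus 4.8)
The plan is, for each of (a) and (b), to check the two defining properties of a basis of $\cT_\cO$: every member of the proposed family is $\cT_\cO$-open, and for each $\cT_\cO$-open $U$ and $p\in U$ some member lies between $p$ and $U$. Fix a valuation $v$ with $\cO_v=\cO$ and write $N_\gamma:=\{x\in K:v(x)>\gamma\}$; the $N_\gamma$ form a neighbourhood basis of $0$ for $\cT_\cO$. By weak compatibility and Claim~\ref{clGopeninO}, $G$ is $\cT_\cO$-open; fix an $\cO$-ideal $\cA\subseteq\cM$ with $\sqrt{\cA}=\cM$ and $\cA\subseteq G$ $[\,1+\cA\subseteq G\,]$, pick $0\neq a_0\in\cA$, and put $\delta:=v(a_0)>0$, so that $N_\delta\subseteq\cA\subseteq G$ $[\,1+N_\delta\subseteq G\,]$. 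Openness is immediate: $\cT_\cO$ is a V-topology (Theorem~\ref{thmVtopiff}), hence a field topology, so M\"obius [linear] transformations are $\cT_\cO$-homeomorphisms of $K$; thus every element of $\cS_G$ is $\cT_\cO$-open and in (b) so is every pairwise intersection. For the fineness step, since the families are stable under translation and $\{p+N_\gamma\}_\gamma$ is a neighbourhood basis at $p$, translating reduces everything to: for each $\gamma$ produce a family member $B$ with $0\in B\subseteq N_\gamma$.

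For (a): as an open subgroup of $(K,+)$, $G$ is also $\cT_\cO$-closed, so $K\setminus G$ is open and---using $G\subsetneq K$---non-empty; pick $u\in K\setminus G$. Then $u+N_\delta\subseteq K\setminus G$, which forces $v(g-u)\le\delta$ for all $g\in G$ (while $v(g)\ge v(u)$ gives $v(g-u)\ge v(u)$). A short case split on $v(g)$ versus $v(u)$ then yields $v\bigl(\tfrac{sg}{g-u}\bigr)\ge v(s)+v(u)-\delta$ for every $g\in G$ and every $s\in K^\times$. Taking $v(s)>\gamma+\delta-v(u)$, the M\"obius image $B:=\bigl\{\tfrac{sx}{x-u}:x\in G\bigr\}\in\cS_G$ satisfies $0\in B\subseteq N_\gamma$.

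Part (b) is the heart, and it is why pairwise intersections are needed: a single affine image $aG+b$ is typically unbounded below for $v$, and for a multiplicative $G$ the family $\cS_G$ contains only affine images (inversion, which saved the day in (a), is not available), so it must be simulated by a second affine factor. Using $G\subsetneq K^\times$, pick $u\in K^\times\setminus G$, so that the neighbourhood $u(1+N_\delta)$ of $u$ is disjoint from $G$. The key point is that if $x$ lies in both $G-1$ and $uG-u$---i.e.\ $1+x\in G$ and $1+x/u\in G$---then $\tfrac{1+x}{1+x/u}=u\cdot\tfrac{1+1/x}{1+u/x}\in G$; but if moreover $v(x)<\min(0,v(u))-\delta$, then $1+1/x$ and $1+u/x$ both lie in the group $1+N_\delta\subseteq G$ (Lemma~\ref{lemInverseOf1plusIdeal}), whence $\tfrac{1+x}{1+x/u}\in u(1+N_\delta)$, contradicting $u(1+N_\delta)\cap G=\emptyset$. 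Thus $(G-1)\cap(uG-u)$ is bounded below by $\min(0,v(u))-\delta$, and scaling gives the pairwise intersection $B:=(sG-s)\cap(suG-su)=s\bigl((G-1)\cap(uG-u)\bigr)$, which contains $0$ and lies in $N_\gamma$ as soon as $v(s)>\gamma-\min(0,v(u))+\delta$.

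The routine parts are the homogeneity reduction and the valuation arithmetic; the step requiring an idea is the construction of $B$ in (b)---recognising that the cross-ratio-type quotient $\tfrac{1+x}{1+x/u}$ simultaneously certifies membership in the subgroup $G$ and, when $x$ is large, in the forbidden coset $u(1+N_\delta)$, which is exactly the "missing inversion" supplied by the second affine factor. (Both parts implicitly require $\cO$ non-trivial and $G$ a proper subgroup---this is what makes $\cT_\cO$ non-trivial and lets us choose $u$---and the conclusion is false otherwise.)
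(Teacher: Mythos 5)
Your proof is correct, and its skeleton (openness via continuity of the field operations, then producing a small family member around $0$ and translating) matches the paper's. Part (a) is essentially the paper's argument: both use a M\"obius map whose pole is a point outside $G$ and bound the value of the image by a case split on $v(x)$ against the value of that point; your normalisation (numerator $sx$, so $0\mapsto 0$, followed by scaling) is only tidier bookkeeping than the paper's choice of coefficients $a=d^{-1}\cdot\widetilde a$, $b=a\cdot\widetilde b$ with $d^{-1}\in\cA$. In part (b) you take a genuinely different, and slightly leaner, route: the paper first shows (via Lemma~\ref{lemSubgroupsGeneratedByIdeals}\,(b)) that $\cA$ cannot lie in a single coset $c\cdot G\cup\{0\}$, so as to find two disjoint cosets $c\cdot G$ and $d\cdot G$ both meeting $\cA$, and then proves $\left(a-c\cdot G\right)\cap\left(b-d\cdot G\right)\subseteq\cM_G$ outright for anchors $a\in\cA\cap c\cdot G$, $b\in\cA\cap d\cdot G$; you instead use the cosets $G$ and $u\cdot G$ for an arbitrary $u\notin G$, settle for a lower bound $\min(0,v(u))-\delta$ on the value over $\left(G-1\right)\cap\left(u\cdot G-u\right)$, and recover smallness by the final scaling by $s$ --- which removes the dependence on Lemma~\ref{lemSubgroupsGeneratedByIdeals}\,(b) altogether. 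The contradiction mechanism is the same in both versions: the ratio of the two coset representations of an element of large absolute value certifies membership in $G$ and simultaneously in a forbidden set ($u\cdot(1+N_\delta)$ for you; $c\cdot G\cap d\cdot G=\emptyset$ via $-x$ for the paper), with Lemma~\ref{lemInverseOf1plusIdeal} supplying the needed group structure on $1+N_\delta$. Your closing caveat that $\cO$ must be non-trivial and $G$ proper is apt and matches the paper's implicit standing assumptions in this section.
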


\begin{proof}
	First note that
	$\left\{\alpha\cdot\cM_G+\beta\mid  \alpha\in K^\times, \beta\in K\right\}$
	is a basis of $\cT_{v_G}$.
	\begin{enumerate}[(a)]
		\item Let $a,b,c,d\in K$ such that $a\cdot d-b\cdot c\neq 0$.
		As $G\in \cT_\cO$ by Claim~\ref{clGopeninO}, $G\setminus\left\{-\frac{d}{c}\right\}\in \cT_{\cO}$. As field operations are continuous in $\cT_\cO$ $\left.\left\{\frac{a\cdot x+b}{c\cdot x+d}\,\right|\,x\in G,x\neq-\frac{d}{c}\right\}\in \cT_\cO$. Hence $\cS_G\subseteq \cT_\cO$ and therefore $\cT_G\subseteq \cT_{\cO}$. 
		
		To prove $\cT_{\cO}\subseteq \cT_G$ let $\cA$ be an $\cO_G$-ideal with $\cA\subseteq G$ and $\sqrt{\cA}=\cM_G$. 
		We can choose $d\in K\setminus G$ with $d^{-1}\in \cA$ as follows. Choose $\widetilde{d}\in K\setminus G$. 
		If $0\neq\widetilde{d}^{-1}\in \cA$ set $d:=\widetilde{d}$. 
		If  $0\neq \widetilde{d}^{-1}\notin \cA$, choose $0\neq e\in \cA$. By Lemma~\ref{lemSubgroupsGeneratedByIdeals}~(a)   $0\neq\left(\widetilde{d}-e^{-1}\right){-1}\in \cA$. 
		
		If $e^{-1}\notin G$ set $d:=e^{-1}$. 
		
		If $e^{-1}\in G$, we have $\widetilde{d}-e^{-1} \notin G$. In this case set $d:=\widetilde{d}-e^{-1}$.

		Let $0\neq \widetilde{a},\widetilde{b}\in \cA$. Let $a:=d^{-1}\cdot\widetilde{a},\, b:=a\cdot\widetilde{b}$ and $U:=\left.\left\{\frac{a\cdot x+b}{x+d}\,\right|\, x\in G\right\}$. 
		
		We have $a\cdot d-b=a\cdot\left(d-\widetilde{b}\right)\neq 0$. Hence $a\cdot d-b\neq 0$. 
		Further $x\neq -\frac{d}{1}$ for all $x\in G$. Therefore $U\in\cS_G$.
		Note that  $v_G\left(d\right)<0$ and $v_G\left(\widetilde{b}\right)>0$.
		Let $x\in G$.
		
		Let us first assume  $v_G\left(x\right)<v_G\left(d\right)$.
		Then $v_G\left(x+d\right)=v_G(x)$. Further
		$
		v_G\left(a\cdot x\right)
		<v_G\left(a\right)
		<v_G\left(a\right)+v_G\left(\widetilde{b}\right)
		=v_G\left(b\right)$.
		Therefore $v_G\left(a\cdot x+b\right)=v_G\left(a\right)+v_G\left(x\right)$. Hence 
		$v_G\left(\frac{a\cdot x+b}{x+d}\right)=v_G\left(a\right)>0$.

		Now assume  $v_G\left(x\right)\geq v_G\left(d\right)$. Then 
		$v_G\left(a\cdot x\right)
		\geq v_G\left(a\right)+v_G\left(d\right)
		=v_G\left(\widetilde{a}\right)$.
		
		As $\widetilde{a}\in \cA$ by Remark~\ref{remfracIdiff}~(a) we have $a\cdot x\in \cA$ and therefore $a\cdot x+b\in \cA$.
		As $x+d\notin G$ we have $x+d\notin \cA\subseteq G$. 
		Again by Remark~\ref{remfracIdiff}~(a) follows
		$v_G\left(a\cdot x+b\right)>v_G\left(x+d\right)$ and therefore 
		$v_G\left(\frac{a\cdot x+b}{x+d}\right)>0$. Hence $\frac{a\cdot x+b}{x+d}\in\cM_G$.
		
		That shows $U\subseteq \cM_G$. 
		For $\alpha\in K^\times$ and $\beta\in K$ we have $\alpha\cdot U+\beta\subseteq \alpha\cdot \cM_G+\beta$ and $\alpha\cdot \cU+\beta \in \cS_G$.
		\item Let $n\in \N$, $a_1,\ldots, a_n\in 	K^\times$ and $b_1,\ldots, b_n\in K$. 
		By  Claim~\ref{clGopeninO}  $G\in \cT_\cO$. As field operations are continuous in $\cT_{\cO}$ and $\cT_\cO$ is a topology,
		$\bigcap_{i=1}^n \left(a_i\cdot G+b_i\right)\in \cT_\cO$. Hence $ \cS_G\subseteq \cT_\cO$ and therefore
		$\cT_G\subseteq \cT_\cO$. 
		
		To show $\cT_\cO\subseteq \cT_G$ let $\cA$ be an $\cO_G$-ideal with $1+\cA\subseteq G$ and $\sqrt{\cA}=\cM_G$.
		
		Suppose $c\in K^{\times}$ and $\cA\subseteq c\cdot G\cup\{0\}$. Then for all $0\ne a\in \cA \left\{0\right\}$ there exists  $x\in G$ with $a=c\cdot x$. As $\cA$ is an ideal we have $0\neq \left(c\cdot x\right)^2\in\cA$. Hence $\left(c\cdot x\right)^2\in c\cdot G$ and therefore $c\cdot x^2\in G$. Hence $c\in G$ as  $x^{-2}\in G$. Therefore $ c\cdot G\subseteq G$. Hence $G$ contains all non-zero elements of $\cA$ and hence the group generated by them. But by Lemma~\ref{lemSubgroupsGeneratedByIdeals}~(b) this contradicts $G\neq K^{\times}$.
		
		Therefore there exist $c, d\in K^{\times}$ with $\cA\cap c\cdot G\neq \emptyset$, $ \cA\cap d\cdot G\neq \emptyset$ and $c\cdot G\cap d\cdot G = \emptyset$.
		
		Let $a\in \cA\cap c\cdot G$ and $b\in \cA\cap d\cdot G$.
		Suppose $\left(a-c\cdot G\right)\cap\left(b-d\cdot G\right)\not \subseteq \cM_G$. Let $x\in \left( \left(a-c\cdot G\right)\cap\left(b-d\cdot G\right)\right)\setminus \cM_G$. Then there exist $g_1, g_2\in G$ with
		$x=a-c\cdot g_1=b-d\cdot g_2$.  As  $x^{-1}\in \cO_G$ we have $a\cdot x^{-1}\in \cA$ and $b\cdot x^{-1}\in \cA$. Therefore 
		$
		-c\cdot g_1=x-a=x\cdot\left(1-a\cdot x^{-1}\right)\in x\cdot\left(1+\cA\right)\subseteq x\cdot G
		$
		and
		$
		-d\cdot g_2=x-b=x\cdot\left(1-b\cdot x^{-1}\right)\in x\cdot\left(1+\cA\right)\subseteq x\cdot G.
		$
		Hence there exist $h_1, h_2\in G$ with $-c\cdot g_1=x\cdot h_1$ and $-d\cdot g_2=x\cdot h_2$. We have $g_1\cdot h_1^{-1}\in G$ and $g_2\cdot h_2^{-1}\in G$ and therefore $-x=c\cdot g_1\cdot h_1^{-1}\in c\cdot G$ and $-x=d\cdot g_2\cdot h_2^{-1}\in d\cdot G$. Hence $-x\in c\cdot G\cap d\cdot G$ but this contradicts $c\cdot G\cap d\cdot G=\emptyset$.

		Therefore $U:=\left(-c\cdot G+a\right)\cap\left(-d\cdot G+b\right)\subseteq \cM_G$ and $U\in \cS_G$. 
		
		For $\alpha\in K^\times$ and $\beta\in K$ we have $\alpha\cdot U+\beta\subseteq \alpha\cdot \cM_G+\beta$ and $\alpha\cdot \cU+\beta \in \cS_G$.
	\end{enumerate}
	\vspace{-3ex}
\end{proof}

\begin{lem}\label{lemTopologyInducedbyweaklycompval}
	Let $G \subsetneq K$ $\left[\textrm{resp. }G\subsetneq K^\times\right]$. 
	Then $\cT_{\cO}=\cT_G$ if and only if there exists a non-trivial weakly compatible  coarsening $\cO'$ of $\cO$.
	In this case 
	$\cB_G:=\cS_G$ 
	$\left[\textrm{resp. }\cB_G:=\left.\big\{\left(a\cdot G+b\right)\cap \left(c\cdot G+d\right)\  \right|\   a, c\in K^{\times}, b, d\in K\big\}\right]$ is a basis of $\cT_G$.
\end{lem}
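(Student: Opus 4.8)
The plan is to prove Lemma~\ref{lemTopologyInducedbyweaklycompval} by combining Proposition~\ref{propBasisofO}, which already gives us a basis of $\cT_\cO$ in terms of $\cS_G$ whenever $\cO$ is weakly compatible, with the case distinction of Theorem~\ref{corCaseDistinction} and the structure theory of coarsenings from Proposition~\ref{propMaximalValuationorBasis}. The statement to be shown is an equivalence, so I would split into the two implications and then extract the basis statement as a by-product of the ``$\Leftarrow$'' direction.

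First, the direction ``$\Leftarrow$'': assume $\cO'$ is a non-trivial weakly compatible coarsening of $\cO$. I would like to apply Proposition~\ref{propBasisofO} to $\cO'$, which tells us that $\cS_G$ (resp.\ the pairwise intersections $\left(a_1\cdot G+b_1\right)\cap\left(a_2\cdot G+b_2\right)$ in the multiplicative case) is a basis of $\cT_{\cO'}$. The content of Proposition~\ref{propBasisofO}'s proof is precisely that $\cT_G = \cT_{\cO'}$ when $\cO'$ is weakly compatible, so in fact $\cT_G = \cT_{\cO'}$ and $\cB_G$ is a basis of $\cT_G$. It then remains to see $\cT_{\cO'} = \cT_\cO$, i.e.\ that passing to the coarsening does not change the topology. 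But $\cO$ is itself weakly compatible (it is a coarsening-comparable valuation ring through which $\cO'$ factors; more precisely $\cO_G \subseteq \cO \subseteq \cO'$ forces $\cO$ weakly compatible via Lemma~\ref{lemMsubsetneqintersection} applied to $\cA'$ with $\sqrt{\cA'}=\cM'$, since $\cM \subsetneq \cA'$ gives an $\cO$-ideal contained in $\cA'\subseteq G$ resp.\ $1+\cA'\subseteq G$ with radical $\cM$), so Proposition~\ref{propBasisofO} applies to $\cO$ as well and yields $\cT_G=\cT_\cO$ directly — making the intermediate $\cO'$ argument almost superfluous except that weak compatibility of $\cO$ must genuinely be deduced from the existence of $\cO'$. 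So the real work here is the short radical-ideal argument showing $\cO$ weakly compatible, after which Proposition~\ref{propBasisofO} closes everything, including the basis claim.

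Second, the direction ``$\Rightarrow$'': assume $\cT_\cO = \cT_G$; I must produce a non-trivial weakly compatible coarsening of $\cO$. Here I would argue by contraposition or directly: by Theorem~\ref{thmVtopiff}, the V-topology $\cT_\cO$ coming from the non-trivial valuation $\cO$ equals $\cT_{\widetilde\cO}$ for some rank-one coarsening $\widetilde\cO$ of $\cO$ if a maximal non-trivial coarsening exists (Proposition~\ref{propMaximalValuationorBasis}(a)); otherwise the non-zero prime ideals of $\cO$ form a neighbourhood basis of zero (Proposition~\ref{propMaximalValuationorBasis}(b)). The candidate coarsening should be obtained from the hypothesis that $G$ is open in $\cT_\cO$: openness of $G$ at one of its points together with translation/M\"obius invariance (the defining property of $\cT_G$, Theorem~\ref{thmcTG}) forces $G$ to contain $1+\cA$ (resp.\ $\cA$) for some open fractional-ideal-like set $\cA$ attached to some coarsening of $\cO$, i.e.\ some coarsening is weakly compatible. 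Concretely, I expect to take the coarsening $\widetilde\cO$ furnished by Proposition~\ref{propMaximalValuationorBasis}, note $G$ open in $\cT_{\widetilde\cO}=\cT_\cO$, find a basic zero-neighbourhood $\cA$ (a non-zero prime ideal, or the maximal ideal of the rank-one piece) with $\cA \subseteq G-g \subseteq G\cdot g^{-1}$ for suitable $g\in G$, and check this makes $\widetilde\cO$ weakly compatible — it is non-trivial since $\cT_\cO$ is non-discrete.

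The main obstacle I anticipate is the ``$\Rightarrow$'' direction, specifically extracting a genuine \emph{weakly} compatible coarsening (an $\cO$-ideal with radical equal to a maximal ideal sitting inside $G$ resp.\ $1+G$) purely from the topological hypothesis that $G$ is $\cT_\cO$-open. One has to be careful that the open neighbourhood of $0$ (resp.\ $1$) contained in $G$ is of the right shape — it need not a priori be a fractional ideal, only a union of translates of one — and that one lands in a coarsening rather than in $\cO$ itself; the two sub-cases of Proposition~\ref{propMaximalValuationorBasis} have to be handled separately, and in the rank-one case one must argue that the rank-one coarsening, whose topology agrees with $\cT_\cO$, actually has $G$ containing $1+\cM_{\widetilde\cO}$ up to shrinking, which is where Lemma~\ref{lemMsubsetneqintersection} and Remark~\ref{remfracIdiff}(a) come back in. The basis assertion, by contrast, is essentially free once $\cT_G=\cT_\cO$ and weak compatibility of $\cO$ are both in hand, since it is exactly the content of Proposition~\ref{propBasisofO}.
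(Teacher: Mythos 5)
Your ``$\Leftarrow$'' direction rests on a false claim. You assert that the existence of a non-trivial weakly compatible coarsening $\cO'\supseteq\cO$ forces $\cO$ itself to be weakly compatible, and you call this ``the real work''. It is not true: weak compatibility of $\cO$ requires an $\cO$-ideal $\cB$ with $\sqrt{\cB}=\cM$ (the maximal ideal of $\cO$) contained in $G$ $\left[\textrm{resp. }1+\cB\subseteq G\right]$, whereas the witness $\cA'$ for $\cO'$ satisfies $\cA'\subseteq\cM'\subseteq\cM$ and $\sqrt{\cA'}=\cM'\subsetneq\cM$ whenever $\cO\subsetneq\cO'$ is proper; no ideal contained in the prime $\cM'$ can have radical $\cM$. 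Your appeal to Lemma~\ref{lemMsubsetneqintersection} also runs backwards: that lemma yields $\cM_1\subsetneq\cA$ for an ideal $\cA$ of the \emph{finer} ring, so it gives $\cM'\subsetneq\cB$ for a hypothetical witness $\cB$ of $\cO$, not ``$\cM\subsetneq\cA'$'' (indeed $\cA'\subseteq\cM$, the opposite inclusion). A concrete failure: in the weak case of Theorem~\ref{corCaseDistinction}, $\cO_G$ is weakly compatible but not compatible, and Lemma~\ref{lemMsubsetneqintersection} shows that no proper refinement $\cO\subsetneq\cO_G$ can be weakly compatible — yet every such $\cO$ has the non-trivial weakly compatible coarsening $\cO_G$. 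The lemma exists precisely to cover this situation, so an argument that makes ``the intermediate $\cO'$ almost superfluous'' cannot be right. The correct, and much shorter, step is the one you skipped over: $\cO$ and $\cO'$ are comparable and both non-trivial, hence dependent, so $\cT_\cO=\cT_{\cO'}$ by \cite[Theorem~2.3.4]{EnPr2005}; combined with Proposition~\ref{propBasisofO} applied to $\cO'$ this gives $\cT_\cO=\cT_{\cO'}=\cT_G$ and the basis claim.

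Your ``$\Rightarrow$'' direction is workable but needlessly heavy. You do not need Proposition~\ref{propMaximalValuationorBasis} or the case split on maximal coarsenings at all: the non-zero $\cO$-ideals already form a basis of the neighbourhoods of zero of $\cT_\cO$, so openness of $G$ in $\cT_G=\cT_\cO$ immediately yields a non-zero $\cO$-ideal $\cA$ with $\cA\subseteq G$ $\left[\textrm{resp. }1+\cA\subseteq G\right]$, and $\cO':=\cO_{\sqrt{\cA}}$ is then a non-trivial coarsening whose maximal ideal is $\sqrt{\cA}$, hence weakly compatible by definition. Also the chain ``$\cA\subseteq G-g\subseteq G\cdot g^{-1}$'' mixes the additive and multiplicative cases into a single inclusion that does not parse; keep the two cases separate ($G-g=G$ for $g\in G$ additive, $G\cdot g^{-1}=G$ for $g\in G$ multiplicative).
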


\begin{proof}
	Let us first assume $\cT_{\cO}=\cT_G$. As $G$ is open in $\cT_G=\cT_{\cO}$ and the $\cO$-ideals form a basis of neighbourhoods of  zero of $\cT_{\cO}$, there exists an $\cO$-ideal $\cA\neq\left\{0\right\}$ such that $\cA\subseteq G$ $\left[\textrm{resp. }1+\cA\subseteq G\right]$.
	$\cO':=\cO_{\sqrt{\cA}}\supseteq \cO$ is a valuation ring with maximal ideal $\cM'=\sqrt{\cA}$ and  $\cO\subseteq \cO'$. Hence $\cO'$ is weakly compatible.
	
	Now assume $\cO'\supseteq \cO$ is weakly compatible. By Proposition~\ref{propBasisofO} $\cB_G$
	is basis of $\cT_{\cO'}$ and hence 
	$\cT_{\cO'}=\cT_G$. As $\cO'$ and $\cO$ are dependent  $\cT_{\cO}=\cT_{\cO'}=\cT_G$ (see \cite[Theorem~2.3.4]{EnPr2005}).
\end{proof}

\begin{thm}\label{thmExWeakCompiffVTop}
	Let $K$ be a field with a proper additive subgroup $G$ or with a proper multiplicative subgroup $G$ such that $G\cup\{0\}$ is not an ordering. 
	Then there is a non-trivial  weakly compatible valuation ring if and only if $\cT_G$ is a V-topology.
\end{thm}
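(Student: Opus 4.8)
The plan is to prove the two directions separately, using the equivalence $\cT_{\cO}=\cT_G$ from Lemma~\ref{lemTopologyInducedbyweaklycompval} as the bridge between "there is a non-trivial weakly compatible valuation ring" and "$\cT_G$ is a V-topology."

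For the forward direction, suppose $\cO$ is a non-trivial weakly compatible valuation ring. By Claim~\ref{clGopeninO}, $G$ is open in $\cT_{\cO}$, and by Lemma~\ref{lemTopologyInducedbyweaklycompval} (with $\cO'=\cO$) we get $\cT_{\cO}=\cT_G$. Since $\cO$ is non-trivial, $\cT_{\cO}$ is the topology induced by the non-trivial valuation $v_\cO$, which is a V-topology by Theorem~\ref{thmVtopiff}. Hence $\cT_G=\cT_{\cO}$ is a V-topology. This direction is essentially immediate from the preliminaries.

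For the converse, assume $\cT_G$ is a V-topology. By Theorem~\ref{thmVtopiff}, $\cT_G$ is induced either by a non-trivial valuation $v$ or by a non-trivial absolute value $\abs$. First I would dispose of the absolute-value case: if $\abs$ is non-archimedean it is equivalent to a valuation, so we may assume $\abs$ is archimedean. Then $G$, being open in $\cT_G$, is open with respect to $\abs$, and Proposition~\ref{propgroupOpenforarchAbs} forces $G=K$ (additive case) or $G=K^\times$ or $G\cup\{0\}$ an ordering (multiplicative case) — all excluded by hypothesis. So $\cT_G=\cT_v$ for a non-trivial valuation $v$, i.e. $\cT_{\cO_v}=\cT_G$ with $\cO_v$ non-trivial. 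Now applying Lemma~\ref{lemTopologyInducedbyweaklycompval} (using that $G$ is a proper subgroup, which holds in either case by hypothesis), $\cT_{\cO_v}=\cT_G$ yields a non-trivial weakly compatible coarsening $\cO'$ of $\cO_v$; since $\cO_v$ is non-trivial we may in fact take $\cO'=\cO_{\sqrt{\cA}}$ as in the proof of that lemma, and in any case $\cO'$ is a non-trivial weakly compatible valuation ring, as required.

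The main obstacle is the converse, and within it the case analysis coming from Theorem~\ref{thmVtopiff}: one must rule out the archimedean-absolute-value possibility, which is exactly why the hypotheses "$G$ a proper additive subgroup" and "$G$ a proper multiplicative subgroup with $G\cup\{0\}$ not an ordering" appear in the statement — they are precisely what Proposition~\ref{propgroupOpenforarchAbs} needs to derive a contradiction. Once that case is excluded, everything reduces cleanly to Lemma~\ref{lemTopologyInducedbyweaklycompval}. I would also take care to note that in the forward direction no properness hypothesis is even needed, but in the backward direction the properness of $G$ is used both to invoke Lemma~\ref{lemTopologyInducedbyweaklycompval} and implicitly in the exclusion of the trivial cases.
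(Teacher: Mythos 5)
Your proposal is correct and follows essentially the same route as the paper's proof: the forward direction via Lemma~\ref{lemTopologyInducedbyweaklycompval} and Theorem~\ref{thmVtopiff}, and the converse by first excluding the archimedean absolute value case with Proposition~\ref{propgroupOpenforarchAbs} and then applying Lemma~\ref{lemTopologyInducedbyweaklycompval} to obtain a non-trivial weakly compatible coarsening. Your additional remarks on where the properness hypotheses enter are accurate but not a departure from the paper's argument.
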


\begin{proof}
	Let $\cO$ be a weakly compatible  valuation ring. Then by Lemma~\ref{lemTopologyInducedbyweaklycompval} $\cT_\cO=\cT_G$ and therefore by Theorem~\ref{thmVtopiff} $\cT_G$ is a V-topology.
	
	On the other hand let $\cT_G$ be a V-topology.
	As  $G$ is open with respect to $\cT_G$ by Proposition~\ref{propgroupOpenforarchAbs}  $\cT_G$ cannot be induced by an archimedean absolute value. Hence  by Theorem~\ref{thmVtopiff} $\cT_G$ is induced  by a valuation ring $\cO$.
	By Lemma~\ref{lemTopologyInducedbyweaklycompval} there exists a non-trivial weakly compatible coarsening of $\cO$. 
\end{proof}

\begin{cor}\label{corOGnontrivialiffexiffVtop}
	Let $G\subsetneq K$ be a proper additive subgroup of $K$. [Resp. let $G\subsetneq K^\times$ be a proper multiplicative subgroup of $K$ such that $G\cup\{0\}$ is not an ordering on $K$.] The following are equivalent
	\begin{enumerate}[(i)]
		\item $\cO_G$ is non-trivial. 
		\item There exists a non-trivial weakly compatible valuation ring $\cO$ on $K$.
		\item $\cT_G$ is a V-topology.			
		\item $\cB_G$
		is a basis of a V-to\-po\-lo\-gy.
	\end{enumerate}
\end{cor}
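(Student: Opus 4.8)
The plan is to prove Corollary~\ref{corOGnontrivialiffexiffVtop} by assembling the equivalences from the results established earlier in this section, so that almost no new work is required. The four statements naturally split into two pairs: $(i)\Leftrightarrow(ii)$ is essentially the content of Lemma~\ref{lemNonTrivialIFF} together with the hypothesis on $G$, while $(ii)\Leftrightarrow(iii)\Leftrightarrow(iv)$ comes from Theorem~\ref{thmExWeakCompiffVTop} and Lemma~\ref{lemTopologyInducedbyweaklycompval}. First I would prove $(i)\Rightarrow(ii)$: if $\cO_G$ is non-trivial then by Lemma~\ref{lemNonTrivialIFF} (the ``conversely'' direction) $\cO_G$ itself is a non-trivial weakly compatible valuation ring, so $(ii)$ holds.

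Next, for $(ii)\Rightarrow(i)$, I would apply the forward direction of Lemma~\ref{lemNonTrivialIFF}: we are given a non-trivial weakly compatible valuation ring, and the standing hypothesis of the corollary gives $G\subsetneq K$ $\left[\textrm{resp. }G\subsetneq K^\times\right]$; hence $\cO_G$ is non-trivial. Then $(ii)\Leftrightarrow(iii)$ is precisely Theorem~\ref{thmExWeakCompiffVTop}, whose hypotheses (proper additive subgroup, or proper multiplicative subgroup with $G\cup\{0\}$ not an ordering) are exactly the standing hypotheses of the corollary, so this is a direct citation. It remains to handle $(iii)\Leftrightarrow(iv)$: the implication $(iv)\Rightarrow(iii)$ is immediate since a basis of a V-topology generates a V-topology; for $(iii)\Rightarrow(iv)$, if $\cT_G$ is a V-topology then by Theorem~\ref{thmExWeakCompiffVTop} (or the chain just established) there is a non-trivial weakly compatible valuation ring, and Lemma~\ref{lemTopologyInducedbyweaklycompval} tells us that $\cB_G$ is then a basis of $\cT_G$, which is a V-topology.

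I do not anticipate a genuine obstacle here, since every link is a previously proved statement. The only point requiring mild care is making sure the hypotheses match verbatim when invoking Theorem~\ref{thmExWeakCompiffVTop} and Lemma~\ref{lemTopologyInducedbyweaklycompval} --- in particular that the definition of $\cB_G$ used in $(iv)$ agrees with the one in Lemma~\ref{lemTopologyInducedbyweaklycompval}, and that Remark~\ref{remVtopifBasisV1V6} (V-topology conditions pass between a topology and any basis of its zero-neighbourhoods) is what legitimises phrasing $(iv)$ in terms of $\cB_G$ rather than $\cT_G$. Assuming that bookkeeping, the proof is simply: combine Lemma~\ref{lemNonTrivialIFF}, Theorem~\ref{thmExWeakCompiffVTop}, Lemma~\ref{lemTopologyInducedbyweaklycompval} and Remark~\ref{remVtopifBasisV1V6}.
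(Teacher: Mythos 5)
Your proposal is correct and follows exactly the paper's own (one-line) proof, which cites Lemma~\ref{lemNonTrivialIFF}, Theorem~\ref{thmExWeakCompiffVTop} and Lemma~\ref{lemTopologyInducedbyweaklycompval}; you merely spell out the individual implications and the bookkeeping via Remark~\ref{remVtopifBasisV1V6}. No issues.
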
	

This follows at once by Lemma~\ref{lemNonTrivialIFF}, Theorem~\ref{thmExWeakCompiffVTop} and  Lemma~\ref{lemTopologyInducedbyweaklycompval}.

\begin{lem}\label{corWeakCompExVTop}
	Let $G\subsetneq K$ be a proper additive subgroup of $K$. [Resp. let $G\subsetneq K^\times$ be a proper multiplicative subgroup of $K$ such that $G\cup\{0\}$ is not an ordering on $K$.] Let $\cL_G:= \left\{ +, -, \cdot\,;0,1; \underline{G}\right\}$, where $G$ is a unary relation symbol. 
	Then any of the equivalent assertions is an elementary property in $\cL_G$.
\end{lem}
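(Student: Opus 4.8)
The plan is to axiomatize assertion~(iv) of Corollary~\ref{corOGnontrivialiffexiffVtop} — that $\cB_G$ is a basis of a V-topology — since it is the only one of the four equivalent assertions phrased directly in terms of explicitly parametrized subsets of $K$ rather than in terms of valuation rings; the remaining three are then elementary because, by that corollary, they are equivalent to~(iv). The crucial point is that $\cB_G$ is a \emph{uniformly $\cL_G$-definable} family. In the additive case (where $\cB_G=\cS_G$, see Theorem~\ref{thmcTG}) the $\cL_G$-formula
\[
\beta(y;a,b,c,d)\ :=\ \exists x\,\bigl(\underline{G}(x)\ \wedge\ c\cdot x+d\neq 0\ \wedge\ \left(c\cdot x+d\right)\cdot y=a\cdot x+b\bigr)
\]
defines, as $(a,b,c,d)$ ranges over the tuples with $a\cdot d-b\cdot c\neq 0$, exactly the members of $\cB_G$; in the multiplicative case one uses instead $\beta(y;a_1,b_1,a_2,b_2):=\exists x_1\exists x_2\,(\underline{G}(x_1)\wedge\underline{G}(x_2)\wedge y=a_1\cdot x_1+b_1\wedge y=a_2\cdot x_2+b_2)$, the tuples ranging over $a_1,a_2\in K^\times$, $b_1,b_2\in K$. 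Write $\delta(\bar a)$ for the (clearly $\cL_G$-definable) side condition on the parameters and $\delta_0(\bar a):=\delta(\bar a)\wedge\beta(0;\bar a)$ for ``the set $\{y\mid\beta(y;\bar a)\}$ is a member of $\cB_G$ containing $0$''.

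Next I would reduce~(iv) to a statement about the subfamily $\cN_0:=\{U\in\cB_G\mid 0\in U\}$. A routine check shows that $\cB_G$ is translation invariant (translating a member by $e\in K$ only changes the parameter $b$, resp. $b_1,b_2$, and preserves the side condition). Using this one obtains: $\cB_G$ is a basis of a V-topology if and only if $\cN_0$ satisfies (V\,1)--(V\,6). Indeed, if (V\,1)--(V\,6) hold for $\cN_0$, then Lemma~\ref{lemVTopoplogy} produces a topology $\cT_{\cN_0}$ with $\cN_0$ a basis of its zero-neighbourhoods; translation invariance of $\cB_G$ shows $\cB_G$ is a basis of this same topology, and by Remark~\ref{remVtopifBasisV1V6} the topology is a V-topology. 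Conversely, if $\cB_G$ is a basis of a V-topology $\tau$, then $\tau$ is translation invariant (Theorem~\ref{thmVtopiff}), so $\cN_0$ is a basis of the zero-neighbourhoods of $\tau$; these satisfy (V\,1)--(V\,6), hence so does $\cN_0$ by Remark~\ref{remVtopifBasisV1V6}.

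Finally I would transcribe each of (V\,1)--(V\,6), read off for the family $\cN_0$, into an $\cL_G$-sentence: each quantifier ``$\forall U\in\cN_0$'' (resp.\ ``$\exists V\in\cN_0$'') becomes a quantifier over a parameter tuple guarded by $\delta_0$, each membership ``$z\in U$'' becomes $\beta(z;\bar a)$, each inclusion ``$X\subseteq Y$'' becomes a universally quantified implication, and the set operations that occur (difference, products $x\cdot y$, inverses $x^{-1}$ of elements of $K^\times$, finite intersections) are all expressed by $\cL_G$-terms and formulas. For instance (V\,3) turns into
\[
\forall\bar a\,\Bigl(\delta_0(\bar a)\rightarrow\exists\bar b\,\bigl(\delta_0(\bar b)\wedge\forall y\,\forall z\,\bigl(\beta(y;\bar b)\wedge\beta(z;\bar b)\rightarrow\beta(y-z;\bar a)\bigr)\bigr)\Bigr),
\]
and (V\,1) turns into the conjunction of $\forall z\,(z\neq 0\rightarrow\exists\bar a\,(\delta_0(\bar a)\wedge\neg\beta(z;\bar a)))$ with $\forall\bar a\,(\delta_0(\bar a)\rightarrow\exists z\,(z\neq 0\wedge\beta(z;\bar a)))$; the other axioms are entirely analogous, and the multiplicative case is handled identically with the corresponding $\beta$ and $\delta_0$. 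Let $\sigma_G$ be the conjunction of the six resulting $\cL_G$-sentences. Then for any field $K$ viewed as an $\cL_G$-structure with $\underline{G}$ interpreted by the given subgroup, $K\models\sigma_G$ iff $\cN_0$ satisfies (V\,1)--(V\,6), iff assertion~(iv) holds, iff — under the standing hypotheses of Corollary~\ref{corOGnontrivialiffexiffVtop} — all four equivalent assertions hold; so each of them is an elementary property in $\cL_G$. I do not anticipate a genuine obstacle: the work lies entirely in the bookkeeping of the translation, and the only points needing care are the correct side conditions on parameter tuples and the reduction of ``basis of a V-topology'' to ``(V\,1)--(V\,6) for $\cN_0$'' via translation invariance of $\cB_G$ together with Lemma~\ref{lemVTopoplogy} and Remark~\ref{remVtopifBasisV1V6}.
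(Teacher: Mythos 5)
Your proposal is correct and is essentially the paper's own argument, spelled out in full: the paper's proof is the two-line observation that one can express in $\cL_G$ that (V\,1)--(V\,6) hold for $\cB_G$ and then invoke Remark~\ref{remVtopifBasisV1V6}, which is exactly the translation you carry out explicitly via the uniformly definable family $\beta(y;\bar a)$. Your extra care in restricting to the zero-neighbourhood members $\cN_0$ and justifying this by translation invariance of $\cB_G$ fills in a detail the paper leaves implicit, but it is the same route.
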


\begin{proof}
	We can express in $\cL_G$, that the axioms (V\,1) to (V\,6) hold for $\cB_G$ and hence
	by Remark~\ref{remVtopifBasisV1V6} that $\cB_G$ is a basis of a V-topology. 
\end{proof}

\section{Criteria for the Definability of $\cO_G$}\label{chDefinability}

Let $\cL$ always denote a language and 
$\cL(K)$  the  extension of the language $\cL$ by adding a constant for every element of $K$.
\begin{de}\label{deDefinableValuation}
	\begin{enumerate}[(a)]
		\item We call $\cO$ \emph{$\cL$-definable} (with parameters) or \emph{definable in $\cL$}, if there exists an
		$\cL(K)$-formula $\varphi(x)$ such that $\cO=\left\{x\in K\mid \varphi\left(x\right)\right\}$. We say $\varphi$ defines $\cO$.    
		\item We call  $v$  \emph{$\cL$-definable} if $\cO_v$ is $\cL$-definable.
		\item We call $\cO$ (resp. $v$) \emph{$\cL$-$\emptyset$-definable} or \emph{parameterfree $\cL$-definable}, if the formula $\varphi$ above, is an $\cL$-formula.
		\item We call $\cO$ (respectively $v$) \emph{definable} if it is $\cL_\ring$-definable.
	\end{enumerate}
\end{de}

In some of the theorems in Section~5 we need assumptions that might only be fulfilled in a finite field extension of $K$ but not in $K$ itself. With the following theorem we will still obtain a definable valuation on $K$. 

\begin{prop}\label{propDefValOnSubfields}
	Let $L/K$ be a finite field extension. 
	If $\cO$ is a non-trivial definable valuation ring on $L$, then $\cO\cap K$ is a non-trivial definable valuation ring on $K$. 
\end{prop}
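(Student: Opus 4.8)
The plan is to verify the two standard facts: that the restriction of a valuation ring to a subfield is again a valuation ring, and that definability is preserved under restriction to a subfield.

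First I would recall the general fact that if $\cO$ is a valuation ring on a field $L$ and $K \subseteq L$ is a subfield, then $\cO \cap K$ is a valuation ring on $K$. This is immediate from the characterization of valuation rings: for any $x \in K^\times \subseteq L^\times$, either $x \in \cO$ or $x^{-1} \in \cO$, and since $x, x^{-1} \in K$ either way, we get $x \in \cO \cap K$ or $x^{-1} \in \cO \cap K$. It is clearly a subring of $K$. So $\cO \cap K$ is a valuation ring on $K$. For non-triviality, I would argue that if $\cO \cap K = K$, then $K \subseteq \cO$; since $L/K$ is finite, every element of $L$ is integral over $K \subseteq \cO$, and a valuation ring is integrally closed in its fraction field $L$, so $L \subseteq \cO$, contradicting that $\cO$ is non-trivial. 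Hence $\cO \cap K \neq K$, i.e. $\cO \cap K$ is non-trivial.

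Next I would handle definability. Suppose $\varphi(x)$ is an $\cL_{\ring}(L)$-formula defining $\cO$, say with parameters $\bar{c}$ from $L$. The key observation is that $L$, being a finite extension of $K$, is interpretable in $K$: fix a $K$-basis $e_1, \dots, e_n$ of $L$, identify $L$ with $K^n$ via $\sum a_i e_i \mapsto (a_1, \dots, a_n)$, and note that addition and multiplication on $L$ become $\cL_{\ring}(K)$-definable operations on $K^n$ (the structure constants $e_i e_j = \sum_k \lambda_{ij}^k e_k$ lie in $K$, so they appear as parameters). Under this identification, $K$ itself corresponds to the definable subset $K \times \{0\} \times \cdots \times \{0\}$ of $K^n$, the parameters $\bar c$ become tuples from $K^n$ hence finitely many parameters from $K$, and the formula $\varphi$ translates to an $\cL_{\ring}(K)$-formula $\psi(x_1, \dots, x_n)$ with $\cO = \{\bar a \in K^n \mid K^n \models \psi(\bar a)\}$. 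Then $\cO \cap K$ corresponds to $\{a \in K \mid K \models \psi(a, 0, \dots, 0)\}$, which is $\cL_{\ring}(K)$-definable.

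The main obstacle — really the only point requiring care — is the translation of the defining formula through the interpretation of $L$ in $K$; one must check that ring operations on $L$ genuinely become $\emptyset$-definable (or at worst parameter-definable, with parameters from $K$) operations on $K^n$, so that quantifiers over $L$ in $\varphi$ become (bounded) quantifiers over $K$ in $\psi$ without introducing parameters outside $K$. This is entirely routine once a $K$-basis of $L$ is fixed, since multiplication by each basis element is a $K$-linear map with matrix over $K$. I would state this as the standard fact that a finite extension is (parameter-free, or with parameters from $K$) interpretable in the base field, and conclude.
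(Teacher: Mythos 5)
Your proposal is correct and follows exactly the same route as the paper, which simply states that non-triviality of $\cO\cap K$ follows from $L/K$ being algebraic and that definability follows from $L$ being interpretable in $K$ since $L/K$ is finite. You have merely filled in the standard details (integral closedness of valuation rings for non-triviality, and the coordinatization of $L$ as $K^n$ via a basis for the interpretation), both of which are the intended arguments.
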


\begin{proof}
	As $L/K$ is algebraic, if $\cO$ is non-trivial, then $\cO\cap K$ is also non-trivial. 
	
	As $L/K$ is finite, $L$ is interpretable in $K$ and hence $\cO\cap K$ is definable. %
\end{proof}

Note that if $\cO$ in the proposition above is parameter-free definable, it does not follow that $\cO\cap K$ is parameter-free definable in $K$.

\begin{ex}\label{exAX}
	For every prime number $q\in \N$ the $q$-adic valuation is definable in the $q$-adic numbers $\Q_q$. The valuation ring is  $\cO_q=\left\{x\in \Q_q\mid \exists y\ y^2-y=q\cdot x^2\right\}$.
\end{ex}

This follows from \cite{Ax1965}.

We now want to explore under which conditions $\cO_G$ is definable in $\cL_G:= \left\{ 0, 1;+, -, \cdot\, ; \underline{G}\right\}$. We will first look at the group case, then at the weak case and at last at the residue case. 

The proofs all follow the same pattern. Let $\cL':=\cL_G(\underline{\cO})$, the language $\cL_G$ extended  by a unary relation symbol. We will show that under certain assumptions for  $(K',G',\cO')\equiv (K,G,\cO_G)$ we have $\cO'=\cO_{G'}$. Hence for every $(K',G')\equiv (K,G)$ there exists at most one~$\cO'$ such that $(K',G',\cO')\equiv (K,G,\cO_G)$ and therefore $\underline{\cO}$ is implicitly defined in $\Th(K,G,\cO_G)$. By Beth's Theorem (see for example \cite[Theorem~9.3]{Po2000}) $\underline{\cO}$ is explicitly defined in $\Th(K,G,\cO_G)$. Hence there exists an $\cL_G$-formula $\varphi$ such that $Th(K,G,\cO_G)\vdash \forall x\ \varphi(x)\leftrightarrow \underline{\cO}(x)$ and hence $\cO$ is $\cL_G$-definable.  

We will further prove that the assumptions for the existence of an $\cL_G$-formula $\varphi$ such that  $\varphi$ defines $\cO_{G'}$ for all $(K',G')\equiv (K,G)$ that we give, are not only sufficient but also necessary. For this we will use the following easy observation. 

\begin{rem}\label{remNotDefinableelemEquiv}
	Let $\cL_G:=\left\{ 0, 1;+, -, \cdot\, ; \underline{G}\right\}$ and $\cL'=\cL_G(\underline{\cO})$. 
	If there exists
	$(K',G',\cO')\equiv (K,G,\cO_G)$ such that $\cO'\neq \cO_{G'}$, then  there exists no 
	$\cL_G$-formula $\varphi$ such that $\varphi$ defines $\cO_{G'}$ for  all $(K',G')\equiv(K,G)$. 
\end{rem}

For the proof of Theorem~\ref{thmGroupDefinableIFF} we will need the following lemma.
\begin{lem}\label{lemexistsn}
	\begin{enumerate}[(a)]
		\item Let $G\subsetneq K$ be an additive subgroup of $K$ such that the group case holds. 
		Let $\cO_G$ be discrete. Let $x_0 \in K$ such that $\cM_G=x_0 \cdot\cO_G$.  
		Then there exists $n\in \mathbb{N}$ such that $x_0 ^{-n}\cdot\cO_G\subseteq G$ and $x_0 ^{-\left(n+1\right)}\cdot\cO_G\nsubseteq G$.
		\item  Let $G\subsetneq K^\times$  be a multiplicative subgroup of $K$ such that the group case holds. 
		Let $x\in \cM_G$.
		Then $\cM_G\setminus x\cdot \cM_G\not\subseteq G$.
	\end{enumerate}
\end{lem}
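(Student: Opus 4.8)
The plan is to treat the two parts separately, in each case exploiting that in the \textbf{group case} $\cO_G$ is the \emph{maximal} valuation ring $\widetilde{\cO}$ with $\widetilde{\cO}^\times\subseteq G$, as established in the proof of Theorem~\ref{corCaseDistinction}. Thus for \emph{every} proper coarsening $\cO'\supsetneq\cO_G$ we have $(\cO')^\times\not\subseteq G$, and for every proper refinement $\cO''\subsetneq\cO_G$ we still have $(\cO'')^\times\subseteq G$ by the ring-case argument (any refinement of a valuation ring whose units lie in $G$ has even smaller unit group, hence units still in $G$). Equivalently: a unit $u$ of some valuation ring lies outside $G$ exactly when $v_G(u)\neq 0$ is ``large enough'' that the corresponding coarsening strictly contains $\cO_G$.

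\emph{Part (a).} Since $\cO_G$ is discrete with $\cM_G=x_0\cdot\cO_G$, by Lemma~\ref{lemDiscreteValuations} every element of value $-n$ has the form $x_0^{-n}\cdot z$ with $z\in\cO_G^\times$, so $x_0^{-n}\cdot\cO_G=\{y\in K\mid v_G(y)\geq -n\}$ is exactly the set underlying the $n$-th ``ball'' around $0$. First I would observe that $x_0^{-n}\cdot\cO_G\subseteq G$ for $n=0$ since in the group case (additive) $\cO_G\subseteq G$ by the Remark following Definition~\ref{deCompatible}. Next, the sets $x_0^{-n}\cdot\cO_G$ form an increasing chain whose union is $K$; since $G\subsetneq K$ (group case is a subcase of $G$ proper here by hypothesis), not all of them can be contained in $G$. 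So the set of $n$ with $x_0^{-n}\cdot\cO_G\subseteq G$ is a proper initial segment of $\N$; I claim it is nonempty (contains $0$) and bounded, hence has a maximum $n$, which is exactly what is asserted. The only point needing care is that this initial segment is genuinely bounded rather than all of $\N$: if $x_0^{-n}\cdot\cO_G\subseteq G$ for all $n$ then $G\supseteq\bigcup_n x_0^{-n}\cdot\cO_G=K$, contradiction.

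\emph{Part (b).} Here the group $G$ is multiplicative and $x\in\cM_G$; I want $\cM_G\setminus x\cdot\cM_G\not\subseteq G$. Suppose for contradiction $\cM_G\setminus x\cdot\cM_G\subseteq G$. The idea is to show this forces $(1+\cM_G)$ together with $\cM_G\setminus x\cdot\cM_G$ to generate, multiplicatively, a valuation ring strictly larger than $\cO_G$ whose units lie in $G$, contradicting maximality of $\cO_G$ in the group case. Concretely: pick $a\in\cM_G$ with $v_G(a)$ small positive (e.g. $a$ with $x\cdot\cO_G\subsetneq a\cdot\cO_G$, so $a\in\cM_G\setminus x\cdot\cM_G\subseteq G$); then $a^{-1}\notin\cO_G$ but I would like $a^{-1}\in G$ and, more importantly, the valuation ring $\cO'$ obtained by adjoining $a^{-1}$ (the coarsening whose value group kills the convex subgroup generated by $v_G(a)$) to satisfy $(\cO')^\times\subseteq G$. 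One shows $(\cO')^\times$ consists of quotients $s/t$ with $s,t\in\cM_G\setminus x\cdot\cM_G$ up to factors from $1+\cM_G\subseteq\cO_G^\times\subseteq G$, and using $\cM_G\setminus x\cdot\cM_G\subseteq G$ plus closure of $G$ under products and inverses, each such quotient is in $G$. Then $\cO'\supsetneq\cO_G$ is coarsely compatible with $(\cO')^\times\subseteq G$, contradicting that $\cO_G$ is the unique such ring in the group case (Theorem~\ref{corCaseDistinction}). Hence $\cM_G\setminus x\cdot\cM_G\not\subseteq G$.

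\textbf{Main obstacle.} The routine part is part (a); the delicate step is part (b), specifically identifying precisely which valuation ring $\cO'$ to build from the assumption $\cM_G\setminus x\cdot\cM_G\subseteq G$ and verifying $(\cO')^\times\subseteq G$. The subtlety is that $v_G(x)$ need not be the minimal positive element (no discreteness is assumed in (b)), so the coarsening one forms is the one associated to the largest convex subgroup of the value group \emph{not} containing $v_G(x)$ --- or one argues more directly that $G$ would have to contain $\cO_G^\times\cdot\{a^{\pm 1}\}$ for suitable $a$ and hence a valuation ring strictly larger than $\cO_G$. Getting the bookkeeping on value groups right, while staying consistent with the non-discrete setting, is where the real work lies; everything else is an application of Theorem~\ref{corCaseDistinction} and Lemma~\ref{lemNonComparableRings}.
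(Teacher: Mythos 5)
In part (a) your boundedness argument has a genuine gap: you assert $\bigcup_{n} x_0^{-n}\cdot\cO_G=K$, but this holds only when the value group of $v_G$ is $\Z$ itself. ``Discrete'' in this paper means only that the value group has a minimal positive element; by the stated convention $\Z$ is a \emph{convex subgroup} of the value group, which may be strictly larger (e.g.\ $\Z\times\Z$ ordered lexicographically), and Lemma~\ref{lemDiscreteValuations}~(b) is explicitly phrased for elements with $v(y)\in\Z$ for this reason. If the value group is larger, any $y$ with $v_G(y)<0$ and $v_G(y)\notin\Z$ satisfies $v_G(y)<-n$ for all $n\in\N$ by convexity, so $y$ lies in no $x_0^{-n}\cdot\cO_G$ and your union is a proper subset of $K$. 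It is then conceivable a priori that $x_0^{-n}\cdot\cO_G\subseteq G$ for \emph{every} $n$ while still $G\neq K$, in which case no maximal $n$ exists and the conclusion fails to follow from your argument. The paper closes exactly this case: if every $y\in K\setminus G$ had $v_G\left(y^{-1}\right)>n$ for all $n\in\N$, then $\fp:=\left\{z\in K\mid v_G(z)>n\textrm{ for all }n\in\N\right\}$ is a nonzero prime ideal of $\cO_G$, the localization $\left(\cO_G\right)_\fp$ is a proper coarsening, and a short computation shows $\left(\cO_G\right)_\fp\subseteq G$, contradicting coarse compatibility of $\cO_G$ (Theorem~\ref{thmOG}~(b)). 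Some such step is indispensable; the monotonicity of the chain, the base case $\cO_G\subseteq G$, and the initial-segment observation in your write-up are fine.

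For part (b) your strategy coincides with the paper's: assuming $\cM_G\setminus x\cdot\cM_G\subseteq G$, one passes to the coarsening $\left(\cO_G\right)_\fp$ with $\fp=\left\{y\mid v_G(y)>n\cdot v_G(x)\textrm{ for all }n\in\N\right\}$ and shows its unit group lies in $G$, contradicting Theorem~\ref{thmOG}~(b). You correctly identify the verification of $\left(\cO'\right)^\times\subseteq G$ as the remaining work but do not carry it out; the paper does so by writing each $y$ with $(m-1)\cdot v_G(x)<v_G(y)\leq m\cdot v_G(x)$ as $x^{m-1}\cdot\left(y\cdot x^{-(m-1)}\right)$ with $y\cdot x^{-(m-1)}\in\cM_G\setminus x\cdot\cM_G$, so that $\left(\cO_G\right)_\fp^\times\setminus\cO_G^\times$ lies in the multiplicative group generated by $\cM_G\setminus x\cdot\cM_G$. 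So (b) is an incomplete sketch of essentially the paper's proof, while (a) needs repair.
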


\begin{proof}
	\begin{enumerate}[(a)]
		\item
		As we are in the group case by Corollary~\ref{corCaseDistinction} $\cO_G\subseteq G$ and therefore  for all $y\in K\setminus G$ we have  $v_G\left(y^{-1}\right)>0$. 
		Assume for all $y\in K\setminus G$ we have $v_G\left(y^{-1}\right)> n$ for all $n\in \mathbb{N}$. 
		Let $\fp:=\left.\left\{z\in K \ \right|\   v_G\left(z\right)> n \textrm{ for all } n\in \mathbb{N}\right\}\neq\emptyset$. By Remark~\ref{remfracIdiff}~(c) $\fp$ is a prime ideal of $\cO_G$ and hence 
		$\cO_\fp:=\left(\cO_G\right)_\fp$
		is a valuation ring on $K$ with $\cO_G\subsetneq \cO_\fp$. 
		Let  $z\in \cO_\fp$. Then there exist $a, b\in \cO_G$ with $b\notin \fp$ and  $z=a\cdot b^{-1}$. As $b\notin \fp$ there exists $n\in \N$ with $v_G\left(b\right)\leq n$. We have
		$v_G\left(z^{-1}\right)=v_G\left(b\right)-v_G\left(a\right)\leq n-v_G\left(a\right) \leq n$. Hence by assumption  $z\in G$. Hence $\cO_G\subsetneq \cO_\fp\subseteq G$. This contradicts Theorem~\ref{thmOG}~(b).
		Choose $y\in K \setminus G$ such that $v_G(y^{-1})>0$ is minimal. Then $v_G(y^{-1})\in \N$. 
		By Lemma~\ref{lemDiscreteValuations} $v_G\left(x_0\right)=1$ and there exists  $a\in \cO_G^{\times}$ such that  $y^{-1}=x_0^{n+1}\cdot a$.  
		Hence $G\not\ni y=x_0 ^{-\left(n+1\right)}\cdot a^{-1}\in x_0 ^{-\left(n+1\right)}\cdot \cO_G$.
		Hence $G\not\supseteq  x_0^{-\left(n+1\right)}\cdot \cO_G$.
		
		Assume $z\in \left(x_0 ^{-n}\cdot\cO_G\right)\setminus G$. Then $z=x_0^{-n}\cdot b$ for some $b\in \cO_G$.
		As $v_G\left(z\right)=v_G\left(x_0 ^{-n}\right)+v_G\left(b\right)\geq -n$ we have
		$v_G\left(z^{-1}\right)\leq n< n+1 =v_G\left(y\right)$.
		But this contradicts the minimality of $v_G\left(y^{-1}\right)$.  
		
		Hence we have found $n\in \N$ with $x_0 ^{-n}\cdot\cO_G\subseteq G$ and $x_0 ^{-\left(n+1\right)}\cdot\cO_G\nsubseteq G$.
		\item Assume there exists $x_0\in \cM_G$ such that 
		$\cM_G\setminus x_0\cdot \cM_G\subseteq G$. 
		Let \\
		$\fp:=\left.\left\{y\in K \ \right|\   v_G\left(y\right)>n\cdot v_G\left(x_0\right)\textrm{ for all } n\in \mathbb{N}\right\}$.
		By Remark~\ref{remfracIdiff}~(c) $\fp$ is a prime ideal of $\cO_G$ and therefore $\left(\cO_G\right)_{\fp}=:\cO_\fp$ is a coarsening of $\cO_G$.
		As  $x_0^{-1}\in \cO_\fp\setminus \cO_G$ we have $\cO_\fp\supsetneq \cO_G$.
		Let $\langle\cM_G\setminus x_0\cdot \cM_G\rangle$ denote the smallest multiplicative subgroup of $K^\times$ which contains $\cM_G\setminus x_0\cdot \cM_G$.
		As $\cM_G\setminus x_0\cdot \cM_G\subseteq G$  we have $\langle\cM_G\setminus x_0\cdot \cM_G\rangle\subseteq G$.
		Let $y\in K$ such that
		$0<v_G\left(y\right)\leq m\cdot v_G\left(x_0\right)$ and  $\left(m-1\right)\cdot v_G\left(x_0\right)<v_G\left(y\right)$ for some $m\in \mathbb{N}$.
		Then 
		$y\cdot x_0^{-(m-1)}\in \cM_G$. Further
		$
		v_G\left(y\cdot x_0^{-(m-1)}\right)\leq m \cdot v_G\left(x_0\right)-\left(m-1\right)\cdot v_G\left(x_0\right)
		=v_G\left(x_0\right)$. 
		Thus
		$y\cdot x_0^{-(m-1)}\in \cM_G\setminus x_0\cdot \cM_G$. 
		As $x_0\in \cM_G\setminus x_0\cdot \cM_G$ and therefore
		$x_0^{m-1}\in \langle\cM_G\setminus x_0\cdot \cM_G\rangle $ it follows that $y=x_0^{m-1}\cdot y\cdot x_0^{-(m-1)}\in \langle\cM_G\setminus x_0\cdot \cM_G\rangle$.  
		
		Now let $y\in \cO_\fp^{\times}\setminus \cO_G^\times$. Then  $y\notin \fp$ and $y^{-1}\notin \fp$.  Hence there exist $n_1, n_2 \in\mathbb{N}$ such that $v_G\left(y\right)\leq n_1\cdot  v_G\left(x_0\right)$ and $v_G\left(y^{-1}\right)\leq n_2\cdot v_G\left(x_0\right)$. 
		As $y\notin \cO_G^\times$ by assumption, we have $v_G(y)\neq 0$. 
		If $v_G\left(y\right)>0$ then  $y\in  \langle \cM_G\setminus x_0\cdot \cM_G\rangle$ as shown above.
		If $v_G\left(y\right)<0$ then $y^{-1}\in  \langle \cM_G\setminus x_0\cdot \cM_G\rangle$ and hence $y \in \langle \cM_G\setminus x_0\cdot \cM_G\rangle$. 
		Therefore $\cO_\fp^\times\setminus \cO_G^\times\subseteq \langle \cM_G\setminus x_0\cdot \cM_G\rangle$. 
		As $\cO_G^\times\subseteq G$ and  $\langle \cM_G\setminus x_0\cdot \cM_G\rangle\subseteq G$ we have $\cO_\fp^\times\subseteq G$. But this  contradicts Theorem~\ref{thmOG}~(b).
	\end{enumerate}
\end{proof}

\begin{thm}\label{thmGroupDefinableIFF}
	\begin{enumerate}[(a)]
		\item Let $G$ be an additive subgroup of $K$ such that the group case holds.  Then there exists an $\cL_G$-formula $\varphi$ such that  $\varphi$ defines $\cO_{G'}$ for all  $(K',G')\equiv (K,G)$ if and only if $\cO_G$ is discrete or  $x^{-1}\cdot\cO_G\not\subseteq G$ for all $x\in \cM_G$.
		\item Let $G\subsetneq K^\times$ be a multiplicative subgroup of  $K$ such that the group case holds. 
		Then there exists an $\cL_G$-formula $\varphi$ such that  $\varphi$ defines $\cO_{G'}$ for all $(K',G')\equiv (K,G)$.
	\end{enumerate} 
\end{thm}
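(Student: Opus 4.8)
The overall strategy is dictated by the "pattern" the author describes just before Remark~\ref{remNotDefinableelemEquiv}: one works in the language $\cL' := \cL_G(\underline{\cO})$, and proves that for every $(K',G',\cO') \equiv (K,G,\cO_G)$ one has $\cO' = \cO_{G'}$. Once this implicit-definability statement is established, Beth's theorem supplies the desired $\cL_G$-formula $\varphi$, and moreover the same $\varphi$ works uniformly for all $(K',G') \equiv (K,G)$ since $\Th(K,G,\cO_G) \vdash \forall x\,(\varphi(x)\leftrightarrow\underline{\cO}(x))$. So for both parts the heart of the matter is to show $\cO' = \cO_{G'}$ under the stated hypotheses. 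For the converse direction of~(a), I would appeal to Remark~\ref{remNotDefinableelemEquiv}: it suffices to exhibit, under the negation of the hypothesis (i.e. $\cO_G$ non-discrete and $x^{-1}\cdot\cO_G \subseteq G$ for some $x \in \cM_G$), an elementarily equivalent triple $(K',G',\cO')$ with $\cO' \neq \cO_{G'}$.

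\textbf{Part (a), sufficiency.} Fix $(K',G',\cO') \equiv (K,G,\cO_G)$. Since we are in the group case, by Corollary~\ref{corCaseDistinction} $\cO_G \subseteq G$ and $\cO_G$ is the unique coarsely compatible valuation ring with unit group inside $G$; being in the group case, whether $x^{-1}\cdot\cO_G \subseteq G$ for some $x \in \cM_G$, and (via Lemma~\ref{lemDiscreteValuations}) whether $\cO_G$ is discrete, are both first-order expressible in $\cL'$, so $\cO'$ inherits the corresponding property. First I would check $\cO' \subseteq G'$ (first-order, transferred) and that $\cO'$ is weakly compatible, indeed has unit group in $G'$, hence $\cO_{G'} \subseteq \cO'$. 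The remaining inclusion $\cO' \subseteq \cO_{G'}$ is where the two cases split. If $\cO_G$ (hence $\cO'$) is discrete, pick $x_0$ with $\cM' = x_0\cdot\cO'$; Lemma~\ref{lemexistsn}(a), whose statement transfers, gives $n$ with $x_0^{-n}\cdot\cO' \subseteq G'$ but $x_0^{-(n+1)}\cdot\cO' \not\subseteq G'$ — and the existence of \emph{such} an $n$ is a first-order schema, so the \emph{same} $n$ works in $K'$ as in $K$ (this uniformity is the subtle point). Then $\cO_{G'}$, being coarsely compatible with $\cO_{G'}^\times \subseteq G'$, cannot strictly contain $x_0^{-n}\cdot\cO'$ worth of elements, pinning $\cO_{G'} = \cO'$. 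If instead $x^{-1}\cdot\cO_G \not\subseteq G$ for all $x \in \cM_G$, the same holds in $K'$; this says precisely that no proper coarsening of $\cO'$ of the relevant shape has unit group in $G'$, giving $\cO_{G'} = \cO'$ directly from coarse compatibility.

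\textbf{Part (a), necessity, and Part (b).} For necessity, assume $\cO_G$ is non-discrete and $x_0^{-1}\cdot\cO_G \subseteq G$ for some $x_0 \in \cM_G$. Then (as in the contradiction argument inside Lemma~\ref{lemexistsn}) the localization at the prime $\fp$ of elements of infinitely large value is a proper coarsening, and one expects $\cO_G$ to fail to be definable because in a suitable $\aleph_1$-saturated elementary extension the "right" coarsening becomes visible: I would build $(K',G',\cO')$ by taking an elementary extension of $(K,G,\cO_G)$ realizing a type that separates $\cO_{G'}$ from $\cO'$ (e.g. forcing a new non-trivial convex subgroup), so $\cO' \neq \cO_{G'}$, and invoke Remark~\ref{remNotDefinableelemEquiv}. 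For Part~(b), there is \emph{no} extra hypothesis, so the claim is that in the multiplicative group case $\cO_G$ is always uniformly $\cL_G$-definable. Here Lemma~\ref{lemexistsn}(b) is the key input: for every $x \in \cM_G$ one has $\cM_G \setminus x\cdot\cM_G \not\subseteq G$, a first-order property transferring to any $(K',G',\cO') \equiv (K,G,\cO_G)$, which rules out precisely the "bad" coarsening and forces $\cO_{G'} = \cO'$ by the same coarse-compatibility argument as above, so Beth applies.

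\textbf{Expected main obstacle.} The delicate step is the \emph{uniformity} in the discrete subcase of~(a): the bound $n$ from Lemma~\ref{lemexistsn}(a) depends a priori on the model, and one must argue either that $n$ is fixed across all $(K',G')\equiv(K,G)$ (because "$\exists n$ with $x_0^{-n}\cdot\cO\subseteq G \wedge x_0^{-(n+1)}\cdot\cO\not\subseteq G$" together with the \emph{value} of $n$ is captured sentence-by-sentence in $\Th(K,G,\cO_G)$), or that Beth's theorem only needs implicit definability of $\underline{\cO}$ in the fixed theory, which it does — so the model-varying $n$ is harmless for mere definability and only the first-order transfer of the \emph{property} matters. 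Sorting out exactly how much uniformity is claimed (definability for all $(K',G')\equiv(K,G)$ by one formula, versus a bound uniform in the models) is the part that needs care; everything else is an assembly of Corollary~\ref{corCaseDistinction}, Lemma~\ref{lemexistsn}, Lemma~\ref{lemSubgroupsGeneratedByIdeals}, and Beth's theorem.
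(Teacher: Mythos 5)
Your plan follows the paper's proof essentially step for step: implicit definability of $\underline{\cO}$ in $\Th(K,G,\cO_G)$ plus Beth's theorem for the positive directions, Lemma~\ref{lemexistsn} as the key technical input (part~(a) via transfer of the fixed $n$ — which is indeed harmless, since the statement for that particular $n$ is a single sentence of the theory and Beth only needs implicit definability relative to the fixed theory — and part~(b) via the property $\cM_G\setminus x\cdot\cM_G\not\subseteq G$), together with Remark~\ref{remNotDefinableelemEquiv} and a finitely satisfiable type realized in an elementary extension for the failure direction of~(a). One small correction: in the group case $\cO_G$ is the \emph{coarsest} valuation ring with unit group contained in $G$, so from $(\cO')^\times\subseteq G'$ one gets the easy inclusion $\cO'\subseteq\cO_{G'}$, and the inclusion requiring the extra hypotheses is $\cO_{G'}\subseteq\cO'$ — you have these two swapped in your outline, although your subsequent case analyses do in fact target the correct (hard) inclusion.
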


\begin{proof}
	\begin{enumerate}[(a)]
		\item Let $(K',G',\cO')\equiv \left(K,G,\cO_G\right)$ be an $\cL'$-structure. 
		Let $\cM'$ denote the maximal ideal of $\cO'$.
		As  $\cO_G\subseteq G$, we have $\cO'\subseteq G'$. 
		Hence we are in the group case and therefore by Corollary~\ref{corCaseDistinction} we have $\cO_{G'}\subseteq G'$ and $\cO'\subseteq \cO_{G'}$. 
		
		Let us first assume that $\cO_G$ is discrete.
		By Lemma~\ref{lemDiscreteValuations} there exists $x_0\in K$ such that $\cM_G=x_0\cdot \cO_G$. By Lemma~\ref{lemexistsn}~(a) there exists $n\in \N$ such that $x_0 ^{-n}\cdot\cO_G\subseteq G$ and $x_0 ^{-\left(n+1\right)}\cdot\cO_G\nsubseteq G$. 
		As $(K',G',\cO')\equiv \left(K,G,\cO_G\right)$ there exists $x'\in K'$ such that $x'\cdot \cO'=\cM'$, $\left(x'\right)^{-n}\cdot \cO'\subseteq G'$ and $\left(x' \right)^{-(n+1)}\not\subseteq G'$. 
		Assume $x'\notin \cM_{G'}$. Then $\left(x'\right)^{-1}\in \cO_{G'}$ and thus $\left(x'\right)^{-(n+1)}\cdot \cO'\subseteq \left(x'\right)^{-(n+1)}\cdot \cO_{G'}\subseteq \cO_{G'}\subseteq G'$. But this contradicts the choice of $x'$. 
		Hence $x'\in \cM_{G'}$ and therefore $x'\cdot \cO_{G'}\subseteq \cM_{G'}$. Thus $\cM'=x'\cdot \cO'\subseteq x'\cdot \cO_{G'}\subseteq \cM_{G'}$ and therefore $\cO_{G'}\subseteq \cO'$.
		Altogether follows $\cO_{G'}=\cO'$.

		Now assume $x^{-1}\cdot\cO_G\not\subseteq G$ for all $x\in \cM_G$. 
		Assume $\cO'\subsetneq \cO_{G'}$. Let $x\in \cM'\setminus\cM_{G'} $. Then $x^{-1}\in \cO_{G'}^\times$ and therefore 
		$x^{-1}\cdot \cO'\subseteq x^{-1}\cdot \cO_{G'}= \cO_{G'}\subseteq G'$. But as $(K',G',\cO')\equiv (K,G,\cO_G)$ this is a contradiction. Therefore $\cO'=\cO_{G'}$. 
		

		Hence in both cases by Beth's Theorem there exists an $\cL_G$-formula $\varphi$ such that  $\varphi$ defines $\cO_{G'}$ for all $(K',G')\equiv (K,G)$.
		
		Finally assume $x\in \cM_G$ such that $x^{-1}\cdot\cO_G\subseteq G$ and $\cO_G$ is not discrete.
		Then for every $n\in \mathbb{N}$ there exists $y_n\in \cM_G\setminus \left\{0\right\}$ such that 
		$v_G\left(x\right)\geq n\cdot v_G\left(y_n\right)\geq k\cdot v_G\left(y_n\right)$ for all $k\leq n$.
		For all $a\in \cO_G$ we have  $x\cdot a\cdot y_n^{-k}\in \cO_G$ and therefore $y_n^{-k}\cdot a\in x^{-1}\cdot\cO_G$. Thus $y_n^{-k}\cdot\cO_G\subseteq x^{-1}\cdot \cO_G\subseteq G$ for all $k\leq n$.
		Hence 
		$\Phi\left(y\right)=\left.\left\{ y\in \cM_G\wedge  0\neq y \wedge y^{-n}\cdot \cO_G\subseteq G \ \right|\   n\in \mathbb{N}\right\}
		$
		is a finitely satisfiable type. Thus
		there exists an elementary extension $\left(K', G', \cO'\right)$ of $(K,G,\cO_G)$ and $y'\in K'$ such that $y'$ realizes $\Phi\left(y\right)$.
		Let $ \cO''=\bigcup_{n=0}^{\infty}\left(y'\right)^{-n}\cdot\cO'$.
		As $\left(y'\right)^{-n}\cdot\cO'\subseteq G'$ for every $n\in \mathbb{N}$, we have $\cO''\subseteq G'$ . 
		Further $\cO'\subseteq \cO''$.
		As $y'\in \cM'$ we have $ \left(y'\right)^{-1}\notin \cO'$ but $\left(y'\right)^{-1}\in \left(y'\right)^{-1}\cdot\cO'\subseteq \cO''$ and therefore $\cO'\subsetneq \cO''\subseteq G'$. 
		Thus $\cO'\neq \cO_{G'}$. 
		Hence by Remark~\ref{remNotDefinableelemEquiv} there exists no $\cL_G$-formula $\varphi$ such that $\varphi$ defines $\cO_{G'}$ for all $(K',G')\equiv (K,G)$. 
		\item Let $(K',G',\cO')\equiv (K,G,\cO_G)$. As $\cO_G^\times \subseteq G$ we have $\left(\cO'\right)^\times\subseteq G'$. By Corollary~\ref{corCaseDistinction} we have $\cO'\subseteq \cO_{G'}$ and $\cO_{G'}^\times\subseteq G'$. 
		Assume $\cO'\subsetneq \cO_{G'}$. Let $x\in \cM'\setminus \cM_{G'}$.
		As $x\in \cO_{G'}^{\times}$ we have $x\cdot \cM_{G'}=\cM_{G'}$. 
		Therefore 
		$
		\cM'\setminus x\cdot \cM'\subseteq \cM'\setminus x\cdot \cM_{G'} = \cM'\setminus \cM_{G'}\subseteq \cO_{G'}^{\times}\subseteq G'$.
		Hence there exists $x\in \cM'$ such that  $\cM'\setminus x\cdot \cM'\subseteq G'$. But as by Lemma~\ref{lemexistsn}~(b)  $\cM_G\setminus x\cdot \cM_G\not\subseteq G$, this contradicts $(K',G',\cO')\equiv (K,G,\cO_G)$.
		
		Therefore $\cO'=\cO_{G'}$ and hence by Beth's Theorem  there exists an $\cL_G$-formula $\varphi$ such that $\varphi$ defines $\cO_{G'}$ for all $(K',G')\equiv (K,G)$.
	\end{enumerate}
\end{proof}

\begin{thm}\label{thmWeakDefinableIFF}
	Let $G\subsetneq K$ $[\textrm{resp. }G\subsetneq K^\times]$ be a subgroup of  $K$ such that the weak case holds. 
	Then there exists an $\cL_G$-formula $\varphi$ such that  $\varphi$ defines $\cO_{G'}$ for all
	$(K',G')\equiv (K,G)$ if and only if $\cO_G$ is discrete.
\end{thm}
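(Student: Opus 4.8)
The strategy mirrors the group case: I want to show that in the weak case the extra predicate $\underline{\cO}$ for $\cO_G$ is implicitly defined in $\Th(K,G,\cO_G)$ precisely when $\cO_G$ is discrete, and then invoke Beth's Theorem for the ``if'' direction and Remark~\ref{remNotDefinableelemEquiv} for the ``only if'' direction. So the proof splits into two parts. For the ``if'' direction, assume $\cO_G$ is discrete and let $(K',G',\cO')\equiv(K,G,\cO_G)$; I must prove $\cO'=\cO_{G'}$. For the ``only if'' direction, assume $\cO_G$ is not discrete and construct $(K',G',\cO')\equiv(K,G,\cO_G)$ with $\cO'\ne\cO_{G'}$.

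\textbf{The ``if'' direction.} Since we are in the weak case for $(K,G)$, there is a weakly compatible but not compatible valuation ring, namely $\cO_G$ itself (Theorem~\ref{corCaseDistinction}), so $\cM_G\not\subseteq G$ $[\text{resp. }1+\cM_G\not\subseteq G]$ while there is an $\cO_G$-ideal $\cA$ with $\sqrt{\cA}=\cM_G$ and $\cA\subseteq G$ $[\text{resp. }1+\cA\subseteq G]$. Because $\cO_G$ is discrete, Lemma~\ref{lemDiscreteValuations} gives $x_0\in K$ with $\cM_G=x_0\cdot\cO_G$, and since $\cA$ has radical $\cM_G$ there is a least $m\in\N$ with $x_0^{\,m}\cdot\cO_G\subseteq G$ $[\text{resp. }1+x_0^{\,m}\cdot\cO_G\subseteq G]$; minimality is genuine because $\cM_G=x_0\cdot\cO_G\not\subseteq G$ forces $m\geq 2$, and one checks that $x_0^{\,m-1}\cdot\cO_G\not\subseteq G$ using that $x_0^{\,m-1}\cdot\cO_G$ is the fractional ideal $\{z:v_G(z)\geq m-1\}$, which strictly contains $x_0^{\,m}\cdot\cO_G$ and would otherwise make $\cO_G$ compatible with a proper coarsening, contradicting coarse compatibility (Theorem~\ref{thmOG}(b) together with Lemma~\ref{lemMsubsetneqintersection} in the weak-case analysis). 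Now transfer: $(K',G',\cO')\equiv(K,G,\cO_G)$ yields $x'\in K'$ with $x'\cdot\cO'=\cM'$, with $x'^{\,m}\cdot\cO'\subseteq G'$ $[\text{resp. }1+x'^{\,m}\cdot\cO'\subseteq G']$ and with $x'^{\,m-1}\cdot\cO'\not\subseteq G'$ $[\text{resp. }1+x'^{\,m-1}\cdot\cO'\not\subseteq G']$; moreover $\cO'$ is still weakly compatible but not compatible (these are first-order in $\cL'$ once witnessed), so $(K',G')$ is in the weak case and by Theorem~\ref{corCaseDistinction} $\cO_{G'}$ is the \emph{unique} weakly-compatible-but-not-compatible valuation ring. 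It therefore suffices to show $\cO'$ is that ring, i.e.\ $\cO'=\cO_{G'}$. The inclusion $\cO'\subseteq\cO_{G'}$ is immediate because $\cO'$ is weakly compatible. For the reverse, the relation $x'^{\,m}\cdot\cO'\subseteq G'$ shows $\cO'$ is weakly compatible via the ideal $x'^{\,m}\cdot\cO'$ whose radical is $\cM'$, while $x'^{\,m-1}\cdot\cO'\not\subseteq G'$ pins down $\cM'$ as exactly the fractional-ideal level where $G'$ stops containing everything; since $\cO_{G'}$ is weakly compatible via some ideal $\cA'$ with $\sqrt{\cA'}=\cM_{G'}$ and $\cA'\subseteq G'$, and $\cO'\subseteq\cO_{G'}$, if $\cO'\subsetneq\cO_{G'}$ then $\cM_{G'}\subsetneq\cM'$, so by discreteness $\cM_{G'}$ lies at some fractional-ideal level $\leq m-1$ below $x'^{\,m-1}\cdot\cO'$; but then $\cA'\supseteq\cM_{G'}\supseteq x'^{\,m-1}\cdot\cO'$, contradicting $x'^{\,m-1}\cdot\cO'\not\subseteq G'$. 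Hence $\cO'=\cO_{G'}$, and Beth's Theorem delivers the desired $\cL_G$-formula $\varphi$.

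\textbf{The ``only if'' direction.} Suppose $\cO_G$ is not discrete. Then, as in the final part of the proof of Theorem~\ref{thmGroupDefinableIFF}(a), for each $n\in\N$ one can find $y_n\in\cM_G\setminus\{0\}$ whose value is so small that $y_n^{\,-k}\cdot\cM_G\subseteq\cA\subseteq G$ $[\text{resp. }1+y_n^{\,-k}\cdot\cM_G\subseteq G]$ for all $k\leq n$ — one picks $y_n$ with $v_G(y_n)>0$ but $n\cdot v_G(y_n)$ still below the $v_G$-value of a fixed element of $\cA$, which is possible precisely because the value group has no least positive element. Consequently the type $\Phi(y)$ asserting $y\in\cM_G$, $y\ne 0$, and $y^{-n}\cdot\cM_G\subseteq G$ $[\text{resp. }1+y^{-n}\cdot\cM_G\subseteq G]$ for every $n\in\N$ is finitely satisfiable over $(K,G,\cO_G)$; realize it in an elementary extension $(K',G',\cO')$ by some $y'$. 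Then $\cO'':=\bigcup_{n\geq 0}y'^{\,-n}\cdot\cO'$ is a valuation ring properly containing $\cO'$ (because $y'^{\,-1}\in\cO''\setminus\cO'$, as $y'\in\cM'$), and $1+\cM''\subseteq G'$ where $\cM''=\bigcup_{n\geq 0}y'^{\,-n}\cdot\cM'$ since each $1+y'^{\,-n}\cdot\cM'\subseteq G'$ $[\text{resp.\ the additive analogue } \cM''\subseteq G']$; so $\cO''$ is compatible with $G'$. But $\cO'\subsetneq\cO''$ with $\cO''$ compatible means $\cO'$ cannot be the unique weakly-compatible-but-not-compatible ring $\cO_{G'}$ — indeed, either $(K',G')$ is no longer in the weak case, or it is and $\cO_{G'}\ne\cO'$ because $\cO_{G'}$ must be coarse enough that no proper coarsening has $\widetilde{\cO}^\times\subseteq G'$, which $\cO''$ witnesses to fail for $\cO'$. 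Either way $\cO'\ne\cO_{G'}$, so by Remark~\ref{remNotDefinableelemEquiv} no $\cL_G$-formula defines $\cO_{G'}$ uniformly across $(K',G')\equiv(K,G)$.

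\textbf{Main obstacle.} The delicate point is the reverse inclusion $\cO_{G'}\subseteq\cO'$ in the discrete case: I need the single first-order datum ``$x'^{\,m-1}\cdot\cO'\not\subseteq G'$'' (with $m$ the concrete integer computed in $K$) to be strong enough to prevent $\cO_{G'}$ from being a proper coarsening of $\cO'$. This hinges on discreteness translating the fractional-ideal lattice around $\cM'$ into the integer-indexed chain $x'^{\,k}\cdot\cO'$, so that ``$\cM_{G'}$ strictly coarser than $\cM'$'' is forced down to some level $k\leq m-1$; getting this bookkeeping exactly right — especially keeping straight the multiplicative versus additive bracketed variants and the precise value-group translation $x\cdot\cO_v=\cM_v\Leftrightarrow v(x)=1$ from Lemma~\ref{lemDiscreteValuations} — is where the real work lies. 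The non-discrete ``only if'' direction, by contrast, is essentially a transcription of the analogous passage in Theorem~\ref{thmGroupDefinableIFF}(a) and should go through with only cosmetic changes.
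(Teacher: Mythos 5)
Your ``if'' direction is essentially the paper's argument and works: once you have transferred the witnesses $x'$ with $x'\cdot\cO'=\cM'$, $x'^{\,m}\cdot\cO'\subseteq G'$ $\left[\textrm{resp. }1+x'^{\,m}\cdot\cO'\subseteq G'\right]$ and $\cM'\not\subseteq G'$ $\left[\textrm{resp. }1+\cM'\not\subseteq G'\right]$, the ideal $x'^{\,m}\cdot\cO'$ has radical $\cM'$, so $\cO'$ is weakly compatible but not compatible, and the uniqueness statement of Theorem~\ref{corCaseDistinction} already forces $\cO'=\cO_{G'}$. Your subsequent paragraph on the reverse inclusion is therefore redundant (and contains a slip: an ideal $\cA'$ with $\sqrt{\cA'}=\cM_{G'}$ satisfies $\cA'\subseteq\cM_{G'}$, not $\cA'\supseteq\cM_{G'}$), but nothing essential is lost there.

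The ``only if'' direction has a genuine gap. First, your type is inconsistent: in the weak case $\cM_G\not\subseteq G$ $\left[\textrm{resp. }1+\cM_G\not\subseteq G\right]$, while for $y\in\cM_G\setminus\{0\}$ and $n\geq 0$ one has $y^{-n}\cdot\cM_G\supseteq\cM_G$, so the condition $y^{-n}\cdot\cM_G\subseteq G$ $\left[\textrm{resp. }1+y^{-n}\cdot\cM_G\subseteq G\right]$ fails for every single $n$; no element realizes even one instance. (Relatedly, $\bigcup_n (y')^{-n}\cdot\cM'$ is not the maximal ideal of $\cO''=\bigcup_n (y')^{-n}\cdot\cO'$ --- it contains $1$.) Second, and more fundamentally, producing a proper \emph{compatible} coarsening $\cO''\supsetneq\cO'$ cannot separate $\cO'$ from $\cO_{G'}$: by Lemma~\ref{lemMsubsetneqintersection} \emph{every} proper coarsening of the weak-case ring $\cO_{G'}$ is automatically compatible, and compatibility of $\cO''$ only gives $\cM''\subseteq G'$ $\left[\textrm{resp. }1+\cM''\subseteq G'\right]$, not $\left(\cO''\right)^\times\subseteq G'$, so it does not contradict coarse compatibility of $\cO'$ (in the weak case no valuation ring has $\cO^\times\subseteq G$ at all). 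The paper's argument points in the opposite direction: non-discreteness lets one realize a two-variable type $\Phi(y,z)$ with $z\notin G$ $\left[\textrm{resp. }z\notin G-1\right]$ and $z\in y^n\cdot\cO_G$ for all $n$; in a realization, $\fp:=\bigcap_n (y')^n\cdot\cO'$ is a prime ideal with $\fp\subsetneq\cM'$ and $z'\in\fp\setminus G'$, so by the linear ordering of fractional ideals every $\cO'$-ideal contained in $G'$ $\left[\textrm{resp. }G'-1\right]$ lies in $\fp$ and hence has radical strictly below $\cM'$; thus $\cO'$ is not weakly compatible at all, whence $\cO'\neq\cO_{G'}$. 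Your closing claim that this direction is ``a transcription'' of the group case is exactly where the argument breaks down: in the group case $G$ contains a fractional ideal strictly above $\cO_G$, whereas in the weak case $G$ does not even contain $\cM_G$.
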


\begin{proof}
	Let us first assume that $\cO_G$ is discrete. 
	Let $\cA$ be an $\cO$-ideal with
	$\cA\subseteq G$ $\left[\textrm{resp. }1+\cA\subseteq G\right]$ and $\cM_G=\sqrt{\cA}$. 
	Let $x_0\in \cM_G$ with $v_G\left(x_0\right)=1$. Let $a\in \cA$ and  $k\in \mathbb{N}$ such that $x_0^k=a$. Then $v_G\left(a\right)=k\in \N$.
	Choose $y_0\in \cM_G\setminus G$ $\left[\textrm{resp. }y_0\in \cM_G\setminus G-1\right]$ such that $v_G\left(y_0\right)$ is maximal. Such a $y_0$ exists as $v_G\left(\cM_G\setminus G\right)$ $\left[\textrm{resp. }v_G\left(\cM_G\setminus G-1\right)\right]$ is bounded by $v_G\left(a\right)$ by Remark~\ref{remfracIdiff} and $v_G$ is discrete. 
	As $y_0\notin G\supseteq \cA$ $\left[\textrm{resp. }y_0\notin G-1\supseteq \cA\right]$  by Remark~\ref{remfracIdiff} we have $0<v_G\left(y_0\right)<v_G\left(a\right)=k$. Hence $v_G\left(y_0\right)\in \mathbb{N}$.
	From Lemma~\ref{lemDiscreteValuations} follows that there exists $n \in \mathbb{N}$ and $b\in \cO_G^{\times}$ such that  $y_0=x_0^n\cdot b$.  
	Hence $y_0\in x_0^n\cdot \cO_G\setminus G$ $\left[\textrm{resp. }y_0\in x_0^n\cdot \cO_G\setminus (G-1)\right]$ and therefore $G\not\supseteq  x_0^{n}\cdot \cO_G$  $\left[\textrm{resp. }G-1\not\supseteq  x_0^{n}\cdot \cO_G\right]$.
	Assume there exists $\displaystyle z\in x_0^{n+1}\cdot\cO_G\setminus G$ $\displaystyle \left[\textrm{resp. }z\in x_0^{n+1}\cdot\cO_G\setminus G-1\right]$.
	Let $z_0\in \cO_G$ such that $z=x_0^{n+1}\cdot z_0$.
	We have $v_G\left(z\right)\geq n+1>v_G\left(y_0\right)$.
	But this contradicts the maximality of $v_G\left(y_0\right)$.  Hence $x_0^{n+1}\cdot\cO_G\subseteq G$ $\left[\textrm{resp. }1+x_0^{n+1}\cdot\cO_G\subseteq G\right]$.

	Now let $(K',G',\cO')\equiv (K,G,\cO_G)$. 
	Let $\cM'$ be the maximal ideal of $\cO'$. As $\cO_G$ is not compatible with $G$,  $\cO'$ is not compatible with $G'$. 
	Further there exists $x'\in K'$ such that $x' \cdot\cO'=\cM'$, $\left(x'\right)^n\cdot\cO'\not\subseteq G'$ and  $\left(x'\right)^{n+1}\cdot\cO'\subseteq G'$ $\Big[\textrm{resp. }x' \cdot\cO'=\cM'$, $1+\left(x'\right)^n\cdot\cO'\not\subseteq G'$ and $1+\left(x'\right)^{n+1}\cdot\cO'\subseteq G'\Big]$.
	Let $v'$ be a valuation with $\cO_{v'}=\cO'$. 
	Let $\cA:=\left\{a\in K \ \left|\   v'\left(a\right)>v'\left(\left(x'\right)^{n+1}\right)\right.\right\}$.
	$\cA$ is an $\cO'$-ideal with $\cA\subseteq \left(x'\right)^{n+1}\cdot\cO'\subseteq G'$ $\left[\textrm{resp. }1+\cA\subseteq 1+ \left(x'\right)^{n+1}\cdot\cO'\subseteq G'\right]$. Further   for every $z\in \cM'$ there exists $a\in \cO'$ such that
	$z=x'\cdot a$. We have $v'\left(z^{n+2}\right)=v'\left(x'\right)^{n+1}+v'\left(x'\right)+v'\left(a^{n+2}\right)>v'\left(\left(x'\right)^{n+1}\right)$ and hence $z\in \sqrt{\cA}$. Therefore $\sqrt{\cA}=\cM$ and thus $\cO'$ is weakly compatible with $G'$. 
	By Corollary~\ref{corCaseDistinction}   $\cO'=\cO_{G'}$. 
	Hence by Beth's Theorem  if $\cO_G$ is discrete there exists an $\cL_G$-formula $\varphi$ such that  $\varphi$ defines $\cO_{G'}$ for all $(K',G')\equiv (K,G)$.

	Now assume $\cO_G$ is not discrete.
	Let $x_0\in \cM_G\setminus G$ $\left[\textrm{resp. }x_0\in \cM_G\setminus G-1\right]$. Then $x_0 \cdot\cO_G\nsubseteq G$ $\left[\textrm{resp. }x_0 \cdot\cO_G\nsubseteq G-1\right]$.
	As $\cO_G$ is not discrete, for every $n\in \mathbb{N}$ there exists $y\in \cM_G\setminus \left\{0\right\}$ such that $v_G\left(x_0\right)\geq n\cdot v_G\left(y\right)\geq k\cdot v_G\left(y\right)$ for all $k\leq n$.
	For $a\in \cO_G$ we have  $x_0\cdot a\cdot y^{-k}\in \cO_G$. Therefore $x_0\cdot a\in y^k\cdot \cO_G$. Hence
	$y^k\cdot\cO_G\supseteq x_0 \cdot\cO_G\not\subseteq G$ $\left[\textrm{resp. }y^k\cdot\cO_G\supseteq x_0 \cdot\cO_G\not\subseteq G-1\right]$ for all $k\geq n$. 
	Let
	$z\in y^n\cdot\cO_G\setminus G$ $\left[\textrm{resp. }z\in y^n\cdot\cO_G\setminus\left( G-1\right)\right]$.
	As $y\in \cO_G$ we have $y^n\cdot \cO_G\subseteq y^k\cdot\cO_G$ and therefore $z\in y^k\cdot\cO_G$ for every $k\leq n$. Thus there exists $z \in \bigcap_{k=1}^n y^k\cdot\cO_G=y^n\cdot\cO_G$ with $z\notin G$.
	Therefore $
	\Phi\left(y,z\right)=\left\{y\in \cM_G\wedge 0\neq y \wedge z\in y^{n}\cdot\cO_G\wedge z\notin  G \ \left|\   n\in \mathbb{N}\right.\right\}$ 
	$\left[\textrm{resp. }\Phi\left(y,z\right)=\left\{ y\in \cM_G\wedge 0\neq y\wedge z\in y^{n}\cdot\cO_G\wedge z\notin  G-1 \ \left|\   n\in \mathbb{N}\right.\right\}\right]$
	is a fi\-nite\-ly satisfiable type. Hence  
	there exist an elementary extension $\left(K', G', \cO'\right)$ and $y',\, z'\in K'$ such that $\left(y', z'\right)$ realizes $\Phi\left(y,z\right)$.
	Let $\fp=\bigcap_{n=1}^{\infty}(y')^{n}\cdot\cO'$.
	Let $a,b\in \fp$. Then for all $n\in \N$ there exist $a_n,\,b_n\in \cO'$ such that $a=(y')^{n}\cdot a_n$ and  $b=(y')^{n}\cdot b_n$. We have $a+b=(y')^n\cdot(a_n+b_n)\in (y')^n\cdot \cO'$. Hence $a+b\in \fp$.
	Let $c\in \cO'$. For every $n\in \N$ we have $c\cdot a=c\cdot (y')^{n}\cdot a_n\in (y')^{n}\cdot \cO'$. Hence $c\cdot a\in \fp$. 
	Now let $a,b\in \cO'$ with $a\cdot b\in \fp$. Assume $a\notin \fp$. Then there exists $n_0\in \N$ such that $a\notin (y')^{n_0}\cdot \cO'$. Hence $ v_G\left(a\cdot\left(y'\right)^{-n_0}\right)<0$. Let $m\in \N$. We have $a\cdot b\in (y')^{n_0+m}\cdot \cO'$ and thus $ 0\leq  v_G\left(a\cdot(y')^{-n_0}\right)+v_G\left(b\cdot(y')^{-m}\right)$. Hence  we have $ v_G\left(b\cdot(y')^{-m}\right)>0$ and therefore $b\in (y')^m\cdot \cO'$. Thus $b\in \fp$. Hence $\fp$ is an $\cO'$-prime ideal.  
	As $z'\in \fp$ we have $\fp\not\subseteq G'$ $\left[\textrm{resp. }\fp\not\subseteq G'-1\right]$.
	As  $(y')^{n}\cdot\cO'\subseteq \cM'$ for all $n\in \mathbb{N}$ we have $\fp\subseteq \cM'$. 
	As $(y')^{-1}\notin \cO'$ we have $y'\notin (y')^2\cdot\cO'$. Hence $\fp\subsetneq \cM'$.
	By Remark~\ref{remfracIdiff}  for every ideal $\cA\subseteq G'$ $\left[\textrm{resp. }\cA\subseteq G'-1\right]$ we have $\cA\subseteq \fp$ and therefore a $\sqrt{\cA}\subseteq \fp\subsetneq\cM'$.
	Hence $\cO'$ is not coarsely compatible with $G'$. In particular $\cO'\neq \cO_{G'}$. By Remark~\ref{remNotDefinableelemEquiv} there exists no $\cL_G$-formula $\varphi$ such that  $\varphi$ defines $\cO_{G'}$ for all $(K',G')\equiv (K,G)$. 
\end{proof}

For every   subgroup $G$ of $K$, $\overline{G}:=\varrho(G)$ is a subgroup of the residue field $\overline{K}$. We will show the following lemma.

\begin{lem}\label{lemOverlineGandG}
	Let $G$ be a subgroup of $K$ such that the group case or the residue case holds. 
	\begin{enumerate}[(a)]
		\item Let $G\subseteq K$ be an additive subgroup of $K$. Let $x\in K$.
		Then $\overline{x}\in \overline{G}$ if and only if $x\in G$.
		\item Let $G\subseteq K$ be a multiplicative subgroup of $K$. Let $x\in \cO_G^\times$.
		Then $\overline{x}\in \overline{G}$ if and only if $x\in G$.
	\end{enumerate}
\end{lem}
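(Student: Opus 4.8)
The plan is to observe that the only nontrivial content of the lemma is the implication ``$\overline x\in\overline G\Rightarrow x\in G$'', and that this follows from a single structural fact: in both the group case and the residue case $\cO_G$ is \emph{compatible} with $G$ in the sense of Definition~\ref{deCompatible}~(a), i.e. $\cM_G\subseteq G$ $\left[\textrm{resp. }1+\cM_G\subseteq G\right]$. In the residue case this is part of the statement of Corollary~\ref{corCaseDistinction}. In the group case $\cO_G$ is coarsely compatible by Theorem~\ref{thmOG}~(b), hence weakly compatible, and in the group case of Corollary~\ref{corCaseDistinction} every weakly compatible valuation is compatible; so again $\cM_G\subseteq G$ $\left[\textrm{resp. }1+\cM_G\subseteq G\right]$.

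Granting this, I would argue as follows. The direction ``$x\in G\Rightarrow\overline x\in\overline G$'' is immediate from $\overline G=\varrho(G)$. For the converse in part~(a): if $\overline x\in\overline G$, pick $g\in G$ with $\varrho(g)=\varrho(x)$; then $\varrho(x-g)=0$, so $x-g\in\cM_G\subseteq G$, and since $G$ is an additive subgroup containing $g$ we get $x=g+(x-g)\in G$. For part~(b), with $x\in\cO_G^\times$: pick $g\in G$ with $\varrho(g)=\varrho(x)$; since $\varrho(x)\neq 0$ this $g$ lies in $\cO_G^\times$, hence $\varrho(xg^{-1})=1$, i.e. $xg^{-1}\in 1+\cM_G\subseteq G$, and as $G$ is a multiplicative subgroup containing $g$ we conclude $x=g\cdot(xg^{-1})\in G$.

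I do not expect a genuine obstacle here; the only point requiring care is that $\varrho$ is defined on $\cO_G$ only, so in choosing a preimage $g\in G$ of $\overline x$ one automatically obtains $g\in G\cap\cO_G$ (and in part~(b), $g\in\cO_G^\times$, since its residue is nonzero), which is exactly what makes the difference $x-g$, respectively the quotient $xg^{-1}$, land in $\cM_G$, respectively in $1+\cM_G$. Correspondingly the hypothesis ``$\overline x\in\overline G$'' already presupposes $x\in\cO_G$ in~(a), whereas the stronger requirement $x\in\cO_G^\times$ is imposed explicitly in~(b); this asymmetry in the hypotheses is the only substantive difference between the two parts.
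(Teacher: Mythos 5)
Your proposal is correct and follows essentially the same route as the paper: choose a preimage $g\in G$ of $\overline{x}$, note that the difference $x-g$ lies in $\cM_G\subseteq G$ (resp.\ the quotient $xg^{-1}$ lies in $1+\cM_G\subseteq G$), and conclude using the group structure of $G$. The only difference is that you explicitly justify the compatibility $\cM_G\subseteq G$ $\left[\textrm{resp. }1+\cM_G\subseteq G\right]$ in both cases via Theorem~\ref{thmOG} and Theorem~\ref{corCaseDistinction}, which the paper's proof uses without comment.
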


\begin{proof}
	\begin{enumerate}[(a)]
		\item  
		Let $\overline{x}\in \overline{G}$. Then there exists $y\in G$ with $\overline{y}=\overline{x}$ hence $x=y+\alpha$ for some $\alpha \in \cM_G\subseteq G$. As $\alpha, y\in G$, we have $x=y+\alpha \in G$.
		
		The other direction is clear.		
		\item Let $x\in \cO_G^{\times}$.
		Assume $\overline{x}\in \overline{G}$. Then there exists $y\in G$ with $\overline{y}=\overline{x}$ hence $x=y+\alpha$ for some $\alpha \in \cM_G$.
		Let $v_G$ be a valuation with
		$\cO_G=\cO_{v_G}$. We have $v_G\left(y\right)=\min\{v_G(x), v_G(\alpha)\}=0$ and therefore $y\in \cO_G^\times$. Hence $y{-1}\in \cO_G$ and therefore $\alpha\cdot y^{-1}\in \cM_G$. As  $1+\cM_G\subseteq G$  $1+\alpha\cdot y^{-1},  y\in G$. Therefore $x=y\cdot\left(1+\alpha\cdot y^{-1}\right)\in G$.

		The other direction is again clear. 
	\end{enumerate}
	\vspace{-4ex}
\end{proof}

\begin{thm}\label{thmResiduedefinable}
	Let $G\subseteq K$ be a subgroup of a field such that the residue case holds. 
	Then there exists an $\cL_G$-formula $\varphi$ such that  $\varphi$ defines $\cO_{G'}$ for all $(K',G')\equiv (K,G)$  if and only if $G$ is additive or $G$ is multiplicative and $\overline{G}\cup\{0\}$ is no ordering.
\end{thm}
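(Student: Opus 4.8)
The plan is to carry out the Beth-definability argument described before Remark~\ref{remNotDefinableelemEquiv}, as in the proofs of Theorems~\ref{thmGroupDefinableIFF} and~\ref{thmWeakDefinableIFF}. Put $\cL':=\cL_G(\underline{\cO})$. For the ``if'' direction I will show that every $(K',G',\cO')\equiv(K,G,\cO_G)$ satisfies $\cO'=\cO_{G'}$, whence $\underline{\cO}$ is implicitly and so (Beth's Theorem) explicitly $\cL_G$-definable; for the ``only if'' direction, when $G$ is multiplicative and $\overline{G}\cup\{0\}$ is an ordering of $\overline{K}$, I will produce an elementary extension with $\cO'\neq\cO_{G'}$ and invoke Remark~\ref{remNotDefinableelemEquiv}. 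The starting point is that the maximal ideal of a valuation ring is quantifier-free definable from it, so the residue field $\overline{K}=\cO/\cM$ together with its ring structure and the induced subgroup $\overline{G}$ is interpretable in $(K,G,\cO)$; consequently ``$\cO$ is a valuation ring compatible with $G$'', ``$\cO^{\times}\not\subseteq G$'', ``$\overline{G}$ is a proper (additive, resp.\ multiplicative) subgroup of $\overline{K}$'', ``$\overline{G}\cup\{0\}$ is [not] an ordering of $\overline{K}$'', the assertions of Lemma~\ref{lemOverlineGandG}, and -- since $\cB_{\overline{G}}$ is uniformly definable inside $\overline{K}$ -- ``(V\,1)--(V\,6) hold for $\cB_{\overline{G}}$'', i.e.\ by Remark~\ref{remVtopifBasisV1V6} ``$\cT_{\overline{G}}$ is a V-topology'', are all $\cL'$-elementary. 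Moreover, in the residue case $\cO_G$ is the finest compatible valuation ring, so a nontrivial valuation ring on $\overline{K}$ weakly compatible with $\overline{G}$ would pull back along $\varrho$ (using Lemma~\ref{lemOverlineGandG}) to a proper refinement of $\cO_G$ weakly compatible, hence compatible, with $G$ -- impossible; thus $\cO_{\overline{G}}$ is trivial, and if $G$ is additive, resp.\ $G$ is multiplicative with $\overline{G}\cup\{0\}$ not an ordering, Corollary~\ref{corOGnontrivialiffexiffVtop} (applied in $\overline{K}$) gives the $\cL'$-elementary fact that $\cB_{\overline{G}}$ is \emph{not} a basis of a V-topology.

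For the ``if'' direction let $(K',G',\cO')\equiv(K,G,\cO_G)$. By transfer $\cO'$ is a valuation ring compatible with $G'$, $(\cO')^{\times}\not\subseteq G'$, $\overline{G'}$ is a proper subgroup of $\overline{K'}$ (not an ordering cone when $G$ is multiplicative), and $\cB_{\overline{G'}}$ is not a basis of a V-topology, so by Corollary~\ref{corOGnontrivialiffexiffVtop} and Lemma~\ref{lemNonTrivialIFF} no nontrivial valuation ring on $\overline{K'}$ is weakly compatible with $\overline{G'}$. Suppose $\cO'\neq\cO_{G'}$. If $\cO'$ were not coarsely compatible with $G'$ then, being weakly compatible, it would admit a proper coarsening $\widetilde{\cO}$ with $\widetilde{\cO}^{\times}\subseteq G'$, giving $(\cO')^{\times}\subseteq(\widetilde{\cO})^{\times}\subseteq G'$, a contradiction. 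Hence $\cO'$ is coarsely compatible, so $\cO_{G'}\subseteq\cO'$ and thus $\cO_{G'}\subsetneq\cO'$. As $\cO_{G'}$ is coarsely, hence weakly, compatible, choose a witnessing $\cO_{G'}$-ideal $\cA$ (with $\sqrt{\cA}=\cM_{G'}$ and $\cA\subseteq G'$, resp.\ $1+\cA\subseteq G'$); by Lemma~\ref{lemMsubsetneqintersection} $\cM'\subsetneq\cA\subseteq\cO'$, so reducing modulo $\cM'$ shows that $\cO_{G'}/\cM'$ is a nontrivial valuation ring on $\overline{K'}$ weakly compatible with $\overline{G'}$ -- contradicting the above. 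Therefore $\cO'=\cO_{G'}$, and Beth's Theorem provides the required $\cL_G$-formula.

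For the ``only if'' direction assume $G$ is multiplicative and $P:=\overline{G}\cup\{0\}$ is an ordering of $\overline{K}$; then $\overline{K}$ has characteristic $0$, and the type over $(K,G,\cO_G)$ saying (via Lemma~\ref{lemOverlineGandG}) that $\varrho(y)$ is a positive infinite element of $(\overline{K},P)$ is finitely satisfiable (witnessed by large integers). Realizing it in an elementary extension $(K',G',\cO')$, the ordered field $\overline{K'}$, with positive cone $\overline{G'}\cup\{0\}$, is non-archimedean, so the convex hull $\overline{\cO}$ of $\Q$ in $\overline{K'}$ is a nontrivial valuation ring with $1+\cM_{\overline{\cO}}\subseteq\overline{G'}$. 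Its preimage $\cO'':=\varrho'^{-1}(\overline{\cO})$ is a valuation ring with $\cO''\subsetneq\cO'$, and by the transferred Lemma~\ref{lemOverlineGandG}(b) $1+\cM_{\cO''}\subseteq G'$, i.e.\ $\cO''$ is compatible with $G'$. A short argument -- $\cO'$ must be coarsely compatible, as above, so $\cO_{G'}\subseteq\cO'$, and $\cO'=\cO_{G'}$ would, via the three cases of Corollary~\ref{corCaseDistinction} together with the transferred facts and the existence of the proper compatible refinement $\cO''$, contradict $(\cO')^{\times}\not\subseteq G'$ -- yields $\cO'\neq\cO_{G'}$, so Remark~\ref{remNotDefinableelemEquiv} completes the proof.

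The delicate point throughout is contained in the first paragraph: triviality of $\cO_{\overline{G}}$ cannot be expressed directly (it quantifies over valuation rings) and must be routed through Corollary~\ref{corOGnontrivialiffexiffVtop}, so one has to verify carefully that its hypotheses -- $\overline{G}$ a proper subgroup and, in the multiplicative case, $\overline{G}\cup\{0\}$ not an ordering -- are $\cL'$-elementary and transfer; this is exactly where the dichotomy of the theorem is forced. The remaining steps are careful but routine combinations of the results of Sections~2 and~3 with the standard pull-back of a valuation along a residue map.
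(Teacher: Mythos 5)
Your proof is correct and follows essentially the same route as the paper: the ``if'' direction via the non-existence of a non-trivial weakly compatible valuation ring on the residue field (made elementary through the V-topology axioms for $\cB_{\overline{G}}$ and Corollary~\ref{corOGnontrivialiffexiffVtop}) together with the pull-back/push-down argument, and the ``only if'' direction via a type forcing a non-archimedean ordering on the residue field of an elementary extension and the resulting convex proper compatible refinement. The minor variations (using the witnessing ideal $\cA$ instead of $\cM_{G'}$, and performing the convex-hull construction directly in $\overline{K'}$ rather than first deriving that $\overline{G}\cup\{0\}$ is archimedean) do not change the substance of the argument.
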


\begin{proof}
	Let us first assume $G$ is additive or $G$ is multiplicative and  $\overline{G}\cup\{0\}$ is no ordering.
	Assume $\cO^*$ is a non-trivial valuation ring on $\overline{K}$ which is weakly compatible with $\overline{G}$.
	Let $\widetilde{\cO}:=\varrho^{-1}\left(\cO^*\right)$. As $\cO^*$ is non-trivial, $\widetilde{\cO}$ is a valuation ring on $K$ with $\widetilde{\cO}\subsetneq \cO_G$. 
	Let $\cM^*$ denote the maximal ideal of $\cO^*$ and $\widetilde{\cM}$ the maximal ideal of $\widetilde{\cO}$. 
	Let $\cA$ be an $\cO^*$-ideal such that $\sqrt{\cA}=\cM^*$ and $\cA\subseteq \overline{G}$ $\left[\textrm{resp. }1+\cA\subseteq \overline{G}\right]$. Then  $\varrho^{-1}\left(\cA\right)$ is an $\widetilde{\cO}$-ideal with $\sqrt{\varrho^{-1}\left(\cA\right)}=\widetilde{\cM}$. With Lemma~\ref{lemOverlineGandG}
	$
	\varrho^{-1}\left(\cA\right)\subseteq\varrho^{-1}\left(\overline{G}\right)
	= G$
	$\left[\textrm{resp. }
	1+\varrho^{-1}\left(\cA\right)\subseteq  \varrho^{-1}\left(1\right)+\varrho^{-1}\left(\cA\right)
	=\varrho^{-1}\left(1+\cA\right)
	\subseteq\varrho^{-1}\left(\overline{G}\right)
	= G\right]$.
	Therefore $\widetilde{\cO}$ is a  weakly compatible refinement of $\cO_G$. As we are in the residue case by Corollary~\ref{corCaseDistinction} this is a contradiction. 
	Hence there exists no non-trivial valuation ring on $\overline{K}$ which is weakly compatible with $\overline{G}$.

	Now let $(K',G',\cO')\equiv(K,G,\cO_G)$.   $\cO'$ is coarsely compatible with $G'$ and hence  $\cO_{G'}\subseteq \cO'$. Assume $\cO_{G'}\subsetneq \cO'$. Let $\varrho'$ denote the residue homomorphism
	$\varrho':\cO'\longrightarrow \cO'/\cM'$. Then $\varrho'(\cO_{G'})$ is a non-trivial valuation ring on $\overline{K'}:=\cO'/\cM'$.
	We have $\varrho'\left(\cM_{G'}\right)\subseteq \varrho'\left(G'\right)=\overline{G'}$
	$\left[\textrm{resp. }1+\varrho'\left(\cM_{G'}\right)=\varrho'\left(1+\cM_{G'}\right)\subseteq \varrho'\left(G'\right)=\overline{G'}\right]$. Therefore $\overline{\cO}_{G'}$ is a non-trivial valuation ring on $\overline{K'}$ which is weakly compatible with $\overline{G'}$. But  this contradicts $(K',G',\cO')\equiv (K,G,\cO_G)$ by Corollary~\ref{corWeakCompExVTop}.

	Now assume $G$ is a multiplicative subgroup of $K^\times$ and $\overline{G}\cup\{0\}$ an ordering on the residue field $\overline{K}$ of $(K,\,\cO_G)$.
	Assume $\overline{G}\cup\{0\}$ is not archimedean. Then the valuation ring 
	$\cO^*:=\left.\left\{x\in \overline{K} \ \right|\   \textrm{there exists } a\in \mathbb{Z} \ a-x\in \overline{G},\, a+x\in \overline{G}\right\}$
	on $\overline{K}$ is non-trivial (compare \cite[page\,36]{EnPr2005}).
	Let $\varrho:\cO_G\longrightarrow \overline{K}$ denote the residue homomorphism.
	Then $\varrho^{-1}\left(\cO^*\right):=\widetilde{\cO}$ is a valuation ring on $K$ with $\cO_G\supsetneq \widetilde{\cO}$. Denote by $\cM^*$ the maximal ideal of $\cO^*$ and by $\widetilde{\cM}$ the maximal ideal of $\widetilde{\cO}$. 
	
	$\cO^*$ is $\left(\overline{G}\cup\{0\}\right)$-convex. Hence $1+\cM^*\subseteq \overline{G}$ (see for example \cite[Proposition~2.2.4]{EnPr2005}). As by Lemma~\ref{lemOverlineGandG}
	$\varrho^{-1}\left(\overline{G}\right)
	= G$ $1+\widetilde{\cM}\subseteq
	\varrho^{-1}\left(1\right)+\varrho^{-1}\left(\cM^*\right)
	=\varrho^{-1}\left(1+\cM^*\right)
	\subseteq\varrho^{-1}\left(\overline{G}\right)
	= G$.
	Hence
	$\widetilde{\cO}\subsetneq \cO_G$ is a coarsely compatible valuation ring on $K$. This is a contradiction. Therefore $\overline{G}\cup\{0\}$ must be an archimedean ordering.
	Let $\Phi\left(y\right):=\left.\left\{y\in \overline{K}\wedge n-y\notin \overline{G} \ \right|\   n\in \mathbb{N}\right\}$.
	For every $n\in \mathbb{N}$ there exists $y\in K$ such that $n-y\notin \overline{G}$  and therefore $k-y\notin \overline{G}$ for all $k\leq n$.
	Therefore  $\Phi\left(y\right)$ is a finitely satisfiable type. Hence 
	there exists an elementary extension $\left(K', G', \cO'\right)$ and  $y'\in K'$ such that $y'$ realizes $\Phi\left(y\right)$.   
	$\overline{G'}\cup\{0\}$ is a non-archimedean order on $\overline{K'}$ as $y'>n$ for all $n\in \mathbb{N}$.
	As above from $\overline{G'}\cup\{0\}$ non-archimedean follows that there exists a valuation ring $\widetilde{\cO}\subsetneq \cO'$ which is compatible with $G'$. As we have $\cO_G^\times\not\subseteq G$ we have  $\left(\cO'\right)^\times\not\subseteq G'$. Hence $\cO'$ has a proper refinement which is coarsely compatible with $G'$ and hence  $\cO'\neq \cO_{G'}$.   By Remark~\ref{remNotDefinableelemEquiv} there exists no $\cL_G$-formula $\varphi$ such that  $\varphi$ defines $\cO_{G'}$ for all $(K',G')\equiv (K,G)$. 
\end{proof}


The following table summarizes Theorem~\ref{thmGroupDefinableIFF},  Theorem~\ref{thmWeakDefinableIFF} and Theorem~\ref{thmResiduedefinable}.
\begin{thm}\label{thmOTDefinable}
	Let  $G\subsetneq K$ $\left[\textrm{resp. }G\subsetneq K^\times\right]$ be a subgroup of $K$.
	
	Then there exists an $\cL_G$-formula $\varphi$ such that  $\varphi$ defines $\cO_{G'}$ for all $(K',G')\equiv (K,G)$  if and only if 
	
	\upshape
	\begin{tabular}{|c||c|c|}\hline
		& \textbf{$G\subseteq K$} \textbf{additive} & \textbf{$G\subseteq K^\times$ multiplicative} \\ \hline\hline
		\textbf{group case} & iff either $\cO_G$ is discrete  & always \\ 
		&or for all $x\in \cM_G$  $x^{-1}\cdot\cO_G\subseteq G$&\\ \hline
		\textbf{weak case}  & \multicolumn{2}{|c|}{if and only if $\cO_G$ is discrete}\\\hline
		\textbf{residue case}& always & iff $\displaystyle \overline{G}\cup\{0\}$ is no ordering\\ \hline
	\end{tabular}
\end{thm}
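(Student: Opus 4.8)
The plan is to obtain the table as a dictionary-assembly argument from the case distinction already established. First I would apply Theorem~\ref{corCaseDistinction}: for the fixed subgroup $G\subsetneq K$ [resp.\ $G\subsetneq K^\times$] exactly one of the \textbf{group case}, the \textbf{weak case} and the \textbf{residue case} holds (the three cases being mutually exclusive and jointly exhaustive, as is visible from their defining clauses). Hence it suffices to treat each case separately and, in each of them, to invoke the corresponding biconditional among Theorems~\ref{thmGroupDefinableIFF}, \ref{thmWeakDefinableIFF} and \ref{thmResiduedefinable}; each of those is an ``if and only if'' and therefore determines exactly the relevant cell(s) of the table, including both the direction asserting the existence of $\varphi$ and the direction asserting its non-existence.

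Concretely, if the group case holds I would read off Theorem~\ref{thmGroupDefinableIFF}: part~(b) yields the entry ``always'' in the multiplicative column, and part~(a) yields the ``$\cO_G$ discrete, or the stated condition on $x^{-1}\cdot\cO_G$ for $x\in\cM_G$'' criterion in the additive column. If the weak case holds, Theorem~\ref{thmWeakDefinableIFF} gives, for both the additive and the multiplicative column, that $\varphi$ exists if and only if $\cO_G$ is discrete. If the residue case holds, Theorem~\ref{thmResiduedefinable} gives ``always'' for additive $G$ and ``$\overline{G}\cup\{0\}$ is no ordering'' for multiplicative $G$. These six statements are precisely the six cells of the table. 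Moreover each cited theorem already supplies, via Beth's Theorem, one $\cL_G$-formula defining $\cO_{G'}$ uniformly over the whole class of $(K',G')\equiv(K,G)$ when its condition holds, and a single counterexample via Remark~\ref{remNotDefinableelemEquiv} when it fails, so no further model-theoretic work is needed at the assembly stage.

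I do not expect a genuine obstacle here: all of the content has been absorbed into Lemma~\ref{lemexistsn} and the three theorems. The two delicate rows --- group-case additive, which rests on the non-discreteness type-realization argument of Theorem~\ref{thmGroupDefinableIFF}(a) supported by Lemma~\ref{lemexistsn}(a), and residue-case multiplicative, which rests on the archimedean-versus-non-archimedean analysis of orderings on the residue field carried out in Theorem~\ref{thmResiduedefinable} --- have already been handled there. The present theorem is then purely a repackaging; the only thing to verify is that the partition of subgroups into the three cases is the one furnished by Theorem~\ref{corCaseDistinction}, which is immediate.
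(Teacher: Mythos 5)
Your proposal is correct and coincides with what the paper itself does: the paper gives no separate argument for Theorem~\ref{thmOTDefinable}, stating only that it summarizes Theorems~\ref{thmGroupDefinableIFF}, \ref{thmWeakDefinableIFF} and \ref{thmResiduedefinable}, which is precisely your assembly via the case distinction of Theorem~\ref{corCaseDistinction}. One point worth flagging when you ``read off'' the group-case additive cell: the table as printed says $x^{-1}\cdot\cO_G\subseteq G$ for all $x\in\cM_G$, whereas Theorem~\ref{thmGroupDefinableIFF}~(a) proves the criterion with $x^{-1}\cdot\cO_G\not\subseteq G$ for all $x\in\cM_G$, so the table entry contains a sign error that your (deliberately noncommittal) phrasing silently inherits.
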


\section[$\cO_G$ for $G=(K^\times)^q$ and $G=K^{(p)}$]{$\cO_G$ for Groups of Prime Powers and the Artin Schreier Group}
\label{secDefinabilityASandPowers}

In this section we want to apply the results from the previous sections to the Artin-Schreier group $G=K^{(p)}$ for $p=\ch(K)>0$ and the group of prime powers
$G=(K^\times)^q$ for $q\neq \ch(K)$ prime. As these groups are $\cL_{\ring}$-$\emptyset$-definable, any $\cL_G$-$\emptyset$-definable valuation will be $\cL_{\ring}$-$\emptyset$-definable. 

We will start with a lemma that shows, that for these goups the weak case can only occur if $G=(K^\times)^q$ for $q=\ch\left(\overline{K}\right)$.

\begin{lem}\label{lemWeaklyCompIffComp}
	Let $\cO$ be a valuation ring on a field $K$.
	\begin{itemize}
		\item Let $G$ be an additive subgroup of $K$ and $K^{\left(p\right)}\subseteq G$ for $p:=char\left(K\right)>0$ \textbf{ or }
		\item let $G$ be a multiplicative subgroup such that there exists $n\in \mathbb{N}$ with $\left(K^{\times}\right)^n\subseteq G$ and $\gcdi\big(n, char\left(\overline{K}\right)\big)=1$ if $\ch\left(\overline{K}\right)\neq	0$. 
	\end{itemize}
	Then $v$ is compatible if and only if it is weakly compatible.
\end{lem}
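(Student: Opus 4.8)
The plan is to prove the nontrivial implication ``weakly compatible $\Rightarrow$ compatible'', the converse being immediate by taking $\cA=\cM$ in the definition of weak compatibility. So I would fix an $\cO$-ideal $\cA$ with $\sqrt{\cA}=\cM$ and $\cA\subseteq G$ (resp. $1+\cA\subseteq G$) and aim to deduce $\cM\subseteq G$ (resp. $1+\cM\subseteq G$). The only feature of $\cA$ I would use is the consequence of $\sqrt{\cA}=\cM$ that for each $m\in\cM\setminus\{0\}$ there is $j\in\N$ with $m^{j}\in\cA$, hence $m^{j}\cdot\cO=\{x\in K\mid v(x)\geq v(m^{j})\}\subseteq\cA$. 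Since $v$ is \emph{not} assumed henselian, one cannot simply extract $n$-th roots or solve Artin--Schreier equations; the idea is that it is enough to solve these equations modulo $\cA$, and that because $\sqrt{\cA}=\cM$ the errors produced by a naive Newton-type iteration, whose valuations grow geometrically, fall into $\cA$ after \emph{finitely many} steps. This replacement of Hensel's lemma by a finite approximation is the crux.

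In the multiplicative case I would first note that the hypothesis forces $n\in\cO^{\times}$: if $\ch(\overline{K})=q>0$ this is exactly $q\nmid n$, and if $\ch(\overline{K})=0$ then $n\in\Z\setminus\{0\}\subseteq\cO^{\times}$. The single iteration step is: for $c\in\cM$ there are $y\in\cO^{\times}$ and $c'\in\cM$ with $v(c')\geq 2v(c)$ and $1+c=y^{n}(1+c')$; one takes $y:=1+c/n$ and checks via the binomial expansion (using $v(n)=0$) that $y^{n}=1+c+(\text{element of valuation}\geq 2v(c))$, which produces $c'$. Starting from $b\in\cM$ and iterating gives $y_{1},\dots,y_{k}\in\cO^{\times}$ and $c_{k}\in\cM$ with $1+b=(y_{1}\cdots y_{k})^{n}(1+c_{k})$ and $v(c_{k})\geq 2^{k}v(b)$. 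Choosing $j$ with $b^{j}\in\cA$ and then $k$ with $2^{k}\geq j$ forces $c_{k}\in b^{j}\cO\subseteq\cA$, so $1+b\in(K^{\times})^{n}(1+\cA)\subseteq G$. As $b\in\cM$ was arbitrary, $1+\cM\subseteq G$.

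The additive case ($p=\ch(K)>0$, so $K^{(p)}=\wp(K)$ for the additive map $\wp(x):=x^{p}-x$) I would treat by the parallel, now telescoping, iteration: $\wp(-m)=m-m^{p}$, and using additivity of $\wp$ one gets $m=\wp\bigl(-m-m^{p}-\cdots-m^{p^{k-1}}\bigr)+m^{p^{k}}$ with $v(m^{p^{k}})=p^{k}v(m)$. Picking $j$ with $m^{j}\in\cA$ and $k$ with $p^{k}\geq j$ yields $m^{p^{k}}\in m^{j}\cO\subseteq\cA$, hence $m\in\wp(K)+\cA\subseteq G$, and so $\cM\subseteq G$. (A small point to verify is that the $\wp$-computation is valid in characteristic $2$ as well; it is, since $\wp(-x)=-\wp(x)$ in every characteristic $p$.)

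The only genuine obstacle is the lack of henselianity; everything else is routine once one observes that only approximate solutions modulo $\cA$ are needed and that $\sqrt{\cA}=\cM$ guarantees the geometrically improving iterates become exact modulo $\cA$ after finitely many steps. The two hypotheses are exactly what make the iteration run: $n\in\cO^{\times}$ (from $\gcd(n,\ch(\overline{K}))=1$, resp. $\ch(\overline{K})=0$) lets one invert $n$ and control the binomial expansion, and the characteristic-$p$ map $x^{p}-x$ drives the additive iteration.
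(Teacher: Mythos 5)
Your proof is correct, but it takes a genuinely different route from the paper's. The paper argues by contradiction: assuming $\cO$ is weakly but not compatible, it picks an ideal $\cA\subseteq G$ $\left[\textrm{resp. }1+\cA\subseteq G\right]$ with $\sqrt{\cA}=\cM$ that is \emph{maximal} with this property (using the linear ordering of the $\cO$-ideals), takes $a\in\cM\setminus\cA$ with $a^k\notin\cA$, $a^{k+1}\in\cA$, and forms the strictly larger ideal $\cB:=a^k\cdot\cO$, which satisfies $\cB^2\subseteq\cA$. A \emph{single} application of the identities you also use --- $\left(1+b/n\right)^n\in\left(1+b\right)\cdot\left(1+\cA\right)$ and $\left(-b\right)^p-\left(-b\right)=b-b^p$ with $b^p\in\cA$ --- then shows $\cB\subseteq G$ $\left[\textrm{resp. }1+\cB\subseteq G\right]$, contradicting maximality. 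You instead work directly: for each $m\in\cM$ you choose $j$ with $m^j\in\cA$ and iterate the one-step approximation until the error term has valuation at least $v\left(m^j\right)$, hence lies in $\cA$ by Remark~\ref{remfracIdiff}~(a). Your version avoids the existence of a maximal ideal (a union-of-chain argument) and is more constructive, explicitly exhibiting $1+m$ as an $n$-th power times an element of $1+\cA$ (resp. $m$ as $\wp(-m-m^p-\cdots-m^{p^{k-1}})$ plus an element of $\cA$), with an explicit bound on the number of iterations; the paper's version is shorter per element once the maximal $\cA$ is in hand, since the choice of $\cB$ with $\cB^2\subseteq\cA$ makes one iteration suffice. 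All the individual steps you use (that $n\in\cO^\times$ under the gcd hypothesis, the binomial estimate, the additivity of $\wp$ in characteristic $p$, and the membership criterion for ideals) are sound.
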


\begin{proof}
	It is clear that if $\cO$ is compatible, then it is weakly compatible.
	
	Assume $\cO$ is weakly compatible but not compatible. Let $\cA$ be an $\cO$-ideal  with
	$\sqrt{\cA}=\cM$ and $\cA\subseteq G$ $\left[\textrm{resp. }1+\cA\subseteq G\right]$. By Remark~\ref{remfracIdiff} we can choose $\cA$ maximal with $\cA\subseteq G$ $\left[\textrm{resp. }1+\cA\subseteq G\right]$. 
	Let $a\in \cM\setminus \cA$. Let $k\in \mathbb{N}$ with $a^k\notin \cA$ and $a^{k+1}\in \cA$. 
	Define the $\cO$-ideal $\cB:=a^k\cdot \cO$.
	As $a^k\in \cB\setminus \cA$ we have $\cB\not\subseteq \cA$ and hence by Remark~\ref{remfracIdiff}  $\cA\subsetneq \cB$. 
	Let $x\in \cB^2$. Then there exists $y\in \cO$ with $x=\left(a^k\cdot y\right)^2$. As  $a^{k-1}\cdot y^2\in \cO$ and $a^{k+1}\in \cA$, we have $x\in \cA$. Hence $\cB^2\subseteq \cA$.
	
	Let us first show that if $G$ is an additive subgroup of $K$ then $\cB\subseteq G$.
	Let $b\in \cB$. As $p=\ch(K)\geq 2$, we have $b^{p-2}\in\cO$. Further  $b^2\in \cA$. Therefore $b^p\in \cA$.  
	As $\left(-b\right)^p+b\in K^{(p)}\subseteq G$, therefore $\left(-b\right)^p+b\pm b^{p}\in G$. Thus $\cB\subseteq G$ . 
	
	Now assume that $G$ is a multiplicative subgroup. We will show $1+\cB\subseteq G$.
	
	Let $b\in \cB$.
	Then  
	\[
	G\ni\left(\frac{b}{n}+1\right)^n
	= 1+b+ \binom{n}{2}\cdot \left(\frac{1}{n}\right)^2\cdot  b^2+ b\cdot\left(\sum_{i=0}^{n-3}\binom{n}{i+3}\cdot  \left(\frac{1}{n}\right)^{i+3}\cdot  b^i\right)\cdot  b^2.
	\]
	As $\gcdi\big(n, char\left(\overline{K}\right)\big)=1$ we have $n\in \cO^{\times}$.  Furthermore $b\in \cO$ and for all  $i,\,j \in  \mathbb{N}$ with  $i\leq j$ we have $\binom{i}{j}\in \mathbb{N}\subseteq \cO$. Hence $ \sum_{i=0}^{n-3}\binom{n}{i+3}\cdot  \left(\frac{1}{n}\right)^{i+3}\cdot  b^i\in\cO$ and $\binom{n}{2}\cdot \left(\frac{1}{n}\right)^2\in \cO$.
	As $\cB^2$ is an $\cO$-ideal, from this follows
	$ \binom{n}{2}\cdot \left(\frac{1}{n}\right)^2\cdot  b^2\in \cB^2\subseteq \cA$ and 
	$\left(\sum_{i=0}^{n-3}\binom{n}{i+3} \cdot \left(\frac{1}{n}\right)^{i+3} b^i\right) \cdot b^2\in \cB^2\subseteq \cA$.
	Therefore
	$\left(\frac{b}{n}+1\right)^n\in  1+b+\cA+b\cdot \cA =\left(1+b\right)\cdot \left(1+\cA\right)$. By Lemma~\ref{lemInverseOf1plusIdeal} $\left(1+\cA\right)^{-1}
	=1+\cA$.
	Hence $1+b \in \left(K^\times\right)^q\cdot \left(1+\cA\right)^{-1}
	\subseteq G\cdot \left(1+\cA\right)^{-1}
	=G\cdot  \left(1+\cA\right)
	\subseteq  G$. 
	Hence $1+\cB\subseteq G$.
	
	Therefore $\cB$ is an $\cO$-ideal with $\cB\subseteq G$ $\left[\textrm{resp. }1+\cB\subseteq G\right]$ and $\cA\subsetneq \cB$. But this contradicts the choice of $\cA$. 
\end{proof}

\begin{thm}\label{thmArtinschreierindBewDef}
	Let $K$ be a field with $\textrm{char}\left(K\right)=p>0$ and $G:=K^{\left(p\right)}$.
	Then $\cO_G$ is $\emptyset$-definable. 
\end{thm}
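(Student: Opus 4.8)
\emph{The plan.} It suffices to produce an $\cL_G$-formula defining $\cO_G$: the language $\cL_G$ has no constant symbols besides $0,1$, so such a formula is parameter-free, and since $G=K^{(p)}=\{x^p-x\mid x\in K\}$ is defined by the $\cL_{\ring}$-$\emptyset$-formula $\exists y\,(y^p-y=x)$, replacing $\underline G$ by this formula turns it into an $\cL_{\ring}$-$\emptyset$-formula defining $\cO_G$. Since $K^{(p)}\subseteq G$ trivially, Lemma~\ref{lemWeaklyCompIffComp} applies: every weakly compatible valuation is compatible. By Theorem~\ref{corCaseDistinction} we are therefore in the group case or the residue case, never the weak case. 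In the residue case $G$ is additive, so by Theorem~\ref{thmResiduedefinable} (the ``always'' entry in the table of Theorem~\ref{thmOTDefinable}) there is an $\cL_G$-formula $\varphi$ defining $\cO_{G'}$ for all $(K',G')\equiv(K,G)$, and in particular $\varphi$ defines $\cO_G$.

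So assume we are in the group case; then $\cO_G\subseteq G=K^{(p)}$ by Theorem~\ref{corCaseDistinction}, and by Theorem~\ref{thmGroupDefinableIFF}(a) it remains to show that $\cO_G$ is discrete or that $x^{-1}\cdot\cO_G\nsubseteq K^{(p)}$ for every $x\in\cM_G\setminus\{0\}$. Suppose not: $\cO_G$ is not discrete and $x_0^{-1}\cdot\cO_G\subseteq K^{(p)}$ for some $x_0\in\cM_G\setminus\{0\}$, where $\gamma_0:=v_G(x_0)>0$; in particular $\cO_G$ is nontrivial. Then every $y\in K$ with $-\gamma_0\le v_G(y)<0$ lies in $K^{(p)}$. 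Writing such a $y$ as $b^p-b$ forces $v_G(b)<0$, hence $v_G(y)=p\cdot v_G(b)$; as $v_G(y)/p$ again lies in $[-\gamma_0,0)$ this iterates to $v_G(y)\in\bigcap_n p^n\Gamma_G$, and comparing the residue classes of $y$ and of $b^p$ relative to a fixed element of value $-\gamma_0$ (with the residue class of $y$ ranging over all of $\overline{K}_{v_G}^\times$) shows that $\overline{K}_{v_G}$ is perfect. Moreover $\overline{K}_{v_G}$ is Artin--Schreier closed, since $\cO_G\subseteq K^{(p)}$.

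One now derives a contradiction as follows. If $\Gamma_G$ has a nonzero convex subgroup $\Delta$ with $\gamma_0\notin\Delta$ (for instance the largest one, when it is nonzero), let $\cO'\supsetneq\cO_G$ be the coarsening of $\cO_G$ with value group $\Gamma_G/\Delta$; every unit $y$ of $\cO'$ satisfies $v_G(y)\in\Delta$, so either $v_G(y)=0$ (and $y\in K^{(p)}$, as $\overline{K}_{v_G}$ is Artin--Schreier closed), or $v_G(y)>0$ (and $y\in\cM_G\subseteq K^{(p)}$), or $-\gamma_0\le v_G(y)<0$ (and $y\in K^{(p)}$ by assumption); thus $(\cO')^\times\subseteq K^{(p)}$, contradicting that $\cO_G$ is coarsely compatible (Theorem~\ref{thmOG}(b)). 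Otherwise no such $\Delta$ exists; using that $\overline{K}_{v_G}$ is perfect, that the smallest convex subgroup of $\Gamma_G$ containing $\gamma_0$ is (forced by the divisibility above to be) $p$-divisible, and that elements of value in $[-\gamma_0,0)$ already lie in $K^{(p)}$, one solves every Artin--Schreier equation over the relevant residue quotient of $K$ by successively removing leading terms, which again yields a proper coarsening of $\cO_G$ with unit group inside $K^{(p)}$, or outright gives $K^{(p)}=K$ so that $\cO_G$ is trivial by Lemma~\ref{lemNonTrivialIFF} --- either way a contradiction. Hence $\cO_G$ is discrete or $x^{-1}\cdot\cO_G\nsubseteq K^{(p)}$ for all $x\in\cM_G\setminus\{0\}$, and Theorem~\ref{thmGroupDefinableIFF}(a) supplies the required $\cL_G$-formula. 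The main obstacle I anticipate is exactly this group-case analysis, and within it the case where $\Gamma_G$ has no convex subgroup avoiding $\gamma_0$: there no proper coarsening is directly available, and one must exploit $p$-divisibility of the value group together with perfectness of the residue field to force Artin--Schreier closedness.
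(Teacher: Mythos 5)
Your reduction is correct and matches the paper's: Lemma~\ref{lemWeaklyCompIffComp} rules out the weak case, the residue case is covered by the ``always'' entry of Theorem~\ref{thmOTDefinable}, and in the group case one must verify the criterion of Theorem~\ref{thmGroupDefinableIFF}(a). Your first sub-case of the group-case analysis is sound (and in fact needs neither perfectness nor Artin--Schreier closedness of the residue field: every unit $y$ of the coarsening $\cO'$ with $v_G(y)<0$ satisfies $v_G(y)>-\gamma_0$ by convexity of $\Delta$, hence $y\in x_0^{-1}\cdot\cO_G\subseteq K^{(p)}$, while units with $v_G(y)\geq 0$ lie in $\cO_G\subseteq G$).

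The genuine gap is your second sub-case, which is exactly the hard one. When every non-zero convex subgroup of the value group contains $\gamma_0$, there is no proper coarsening whose units all have value in $\left[-\gamma_0,\infty\right)$, and the proposal to ``solve every Artin--Schreier equation over the relevant residue quotient by successively removing leading terms'' does not go through: removing the leading term of $y-\left(b^p-b\right)$ produces an element of strictly larger value, but the resulting increasing sequence of values need not reach $-\gamma_0$ in finitely many steps, and without henselianity or completeness there is no limit in $K$ --- this is precisely the situation of an immediate Artin--Schreier extension, so perfectness of the residue field and $p$-divisibility of the relevant convex subgroup do not suffice. (The alternative outcome you allow, $K^{(p)}=K$, is already excluded because $G\neq K$.) The paper avoids any limit process: it takes a \emph{maximal} fractional $\cO_G$-ideal $\cA\subseteq G$ containing $x_0^{-1}\cdot\cO_G$ and, using the identity $x=\left(a\cdot b\right)^p-a\cdot b+\left(a\cdot b-a^p\cdot b\right)$ together with value estimates governed by a factor $\alpha$ with $1+p^{-1}\leq\alpha\leq 2-p^{-1}$, shows that the strictly larger fractional ideal $\cA_\alpha$ is still contained in $G$ (after first disposing, via a prime-ideal construction, of the case where $\cA$ admits no gap of ratio $1+p^{-1}$); the contradiction with maximality yields $x^{-1}\cdot\cO_G\not\subseteq G$ for all $x\in\cM_G$ in one stroke. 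To repair your argument you would need to supply such a finite enlargement mechanism in the second sub-case.
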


\begin{proof}
	As the case $\cO_G=K$ is trivial we can assume $\cO_G\neq K$ and hence as well $G\neq K$.

	By Lemma~\ref{lemWeaklyCompIffComp} we are not in the weak case.
	
	If we are in the residue case $\cO_G$ is $\emptyset$-definable by Theorem~\ref{thmOTDefinable}.
	
	Now assume we are in the group case.
	Suppose there exists an $x_0\in \cM_G$ such that $x_0^{-1}\cdot \cO_G\subseteq G$. 
	Then $x_0^{-1}\cdot \cO_G$ is a fractional $\cO_G$-ideal  and therefore there exists a maximal fractional $\cO_G$-ideal $\cA$ with $\cA\subseteq G$. We have $\cO_G\subsetneq x_0^{-1}\cdot \cO_G\subseteq \cA$.
	Let
	$\cA_\alpha :=\left\{x\in K\mid v_G\left(x\right)\geq\alpha\cdot v_G\left(y\right) \textrm{ for some } y\in \cA\right\}$. 
	Let $x\in \cA$. If
	$v_G(x)\geq 0 =\alpha\cdot v_G\left(1\right)$, then $x\in \cA_\alpha$. 
	If $v_G\left(x\right)<0$, then $v_G\left(x\right)>\alpha\cdot v_G\left(x\right)$ and therefore $x\in \cA_\alpha$. 
	Hence $\cA\subseteq \cA_\alpha$. 
	Assume for all $x_1\in \cA\setminus \cO$ there exists $x_2\in \cA$ such that  $\left(1+p^{-1}\right)\cdot v_G(x_1)\geq v_G(x_2)$. 
	Define $\fp:=\left\{x\in K\mid -v_G(x)<v_G(a) \textrm{ for all } a\in \cA\right\}$.
	Let $a\in \cA\setminus \cO\neq \emptyset$. Then for all $x\in \fp$  $v_G(x)>-v_G(a)>0$ and hence
	$x\in \cM$. As further $a^{-1}\in \cM\setminus \fp$ we have $\fp\subsetneq \cM$. 
	Let $x,y\in \fp$. Then
	$-v_G(x+y)\leq -\max\{v_G(x),v_G(y)\}<v_G(a)$ for all $a\in \cA$. Hence $x+y\in \fp$.
	Let $x\in \fp$ and $k\in \cO$. For all $a\in \cA$ we have $k\cdot a\in \cA$. Hence $-v_G(x)>v_G(k\cdot a)$ and therefore $v_G(a)<-v_G(k\cdot x)$. Thus $k\cdot x\in \fp$.  
	Let $x,y\in \cO\setminus \fp$. Let $a,b\in \cA$ such that $-v_G(x)\geq v_G(a)$ and $-v_G(y)\geq v_G(b)$. 
	
	If  $a\in \cO$ or $b\in \cO$ we have $a\cdot b\in \cA$. As $-v_G(x\cdot y)\geq v_G(a)+v_G(b)$ we have $x\cdot y\notin \fp$. 
	
	If $a,b\in \cA\setminus \cO$  let $a_0\in\{a,b\}$ such that $v_G(a_0)=\min \{v_G(a),v_G(b)\}\in \cA\setminus \cO$. By assumption there exists $a_1\in \cA$ such that $0>\left(1+p^{-1}\right)\cdot v_G(a_0)\geq v_G(a_1)$. Recursively for all $n\geq 0$ we can define $a_{n+1}\in \cA\setminus \cO$ with $\left(1+p^{-1}\right)\cdot v_G(a_n)\geq v_G(a_{n+1})$. We then get $\left(1+p^{-1}\right)^n\cdot v_G(a_0)\geq \left(1+p^{-1}\right)^{n-1}\cdot v_G(a_1)\geq \ldots \geq v_G(a_n)$. As $\left(1+p^{-1}\right)^n\longrightarrow \infty$ for $n\rightarrow \infty$, for some $n\in \N$ we have $\left(1+p^{-1}\right)^n\geq 2$ and thus $2\cdot v_G(a_0)\geq \left(1+p^{-1}\right)^n\cdot v_G(a_0)\geq v_G(a_n)$. 
	Hence $-v_G(x\cdot y)\geq v_G(a)+v_G(b)\geq v_G(a_n)$. As $a_n\in \cA$ from this follows $x\cdot y\notin \fp$. 
	
	Altogether we see that for all $x, y\in \cO$ if $x\cdot y\in \fp$ then $x\in \fp$ or $y\in \fp$. 
	
	Hence $\fp$ is a prime ideal. Therefore $\cO_\fp$ is a proper coarsening of $\cO$. 
	
	Let $x\cdot y^{-1}\in \cO_{\fp}$. As $y\notin \fp$ there exists $a\in \cA$ such that $v_G(a)\leq -v_G(y)$. We therefore have  $v_G(x\cdot y^{-1})\geq v_G(x)+v_G(a)\geq v_G(a)$. Hence by Remark~\ref{remfracIdiff}~(a) we have $x\cdot y^{-1}\in \cA$. Thus $\cO_G\subsetneq\cO_{\fp}\subseteq \cA\subseteq G$. But this contradicts the definition of $\cO_G$.
	Hence for some $x_0\in \cA\setminus \cO_G\neq \emptyset$ we have $\left(1+p^{-1}\right)\cdot v_G\left(x_0\right)<v_G(x)$ for all $x\in \cA$.
	As $\cA\subseteq G$, there exists $y_0\in K$ such that $x_0=y_0^p-y_0$. We have $0>v_G(x_0)=v_G(y_0^p-y_0)=p\cdot v_G(y_0)$. Therefore $v_G(y_0)=p^{-1} \cdot v_G(x_0)$ and hence $v_G(x_0\cdot y_0)= \left(1+p^{-1}\right)\cdot v_G(x_0)$.
	As $\left(1+p^{-1}\right)\cdot v_G(x_0)<v_G(x)$ for all $x\in \cA$, from this follows $x_0\cdot y_0\notin \cA$. As $\left(1+p^{-1}\right)\leq \alpha$ and $v_G(x_0)<0$, we have $v_G(x_0\cdot y_0) \geq \alpha\cdot v_G(x_0)$ and hence $x_0\cdot y_0\in \cA_\alpha\setminus \cA$. This shows $\cA\subsetneq \cA_\alpha$. 
	
	Let $x\in \cA_\alpha \setminus \cA$. 
	Then there exists $y\in \cA$ such that 
	\begin{equation}\label{eqvG(x)>alphavG(y)}
	v_G\left(x\right)> \alpha\cdot v_G\left(y\right).
	\end{equation}
	As $\cO_G\subseteq \cA$ we have $v_G\left(x\right)<0$. Hence $v_G\left(y\right)<0$.
	As $x\notin \cA$ by Remark~\ref{remfracIdiff}
	$v_G\left(x\right)<v_G\left(y\right)$ and therefore
	$v_G\left(x\cdot y^{-1}\right)<0$.
	Further
	$v_G\left(x\cdot y^{-1}\right)
	> \alpha\cdot v_G\left(y\right)-v_G\left(y\right)
	>v_G\left(y\right)$
	as $\alpha-1\in \,\left(0, 1\right)$ and $v_G\left(y\right)<0$. Hence
	\begin{equation}\label{eq0>vG(y)}
	0>v_G\left(x\cdot y^{-1}\right)>v_G\left(y\right).
	\end{equation}
	Again by Remark~\ref{remfracIdiff} we get $x\cdot y^{-1}\in \cA\setminus \cO_G$.
	As $\cA\subseteq G$ there exists $a\in K$ such that $x\cdot y^{-1}=a^p-a$.
	As  $0<v_G\left(x\cdot y^{-1}\right)$  we have $v_G\left(a\right)<0$ and hence 
	\begin{equation}\label{eqvG(xy-1)=vG(ap)}
	v_G\left(x\cdot y^{-1}\right)=v_G\left(a^p\right).
	\end{equation}
	Therefore $x\cdot \left(y\cdot a^p\right)^{-1}\in \cO_G^{\times}\subseteq \cA$. 
	As $y \in \cA$ we have
	$x \cdot a^{-p}\in \cA\subseteq G $. Hence there exists $b\in K$ such that $x\cdot a^{-p}=b^p-b$. Hence
	\begin{equation}\label{x=abp-ab+ab-apb}
	x=a^p\cdot b^p-a^p\cdot b
	=\left(a\cdot b\right)^p-a\cdot b+a\cdot b-a^p\cdot b.
	\end{equation}
	We have $\left(a\cdot b\right)^p-a\cdot b\in  G$. Further
	\begin{equation}\label{eqpvG(b)=vG(y)}
	\begin{split}
	\min \left\{p\cdot v_G\left(b\right),\  v_G\left(b\right)\right\}&\ =\ v_G(b^p-b)\\
	&\ =\ v_G\left(x\right)-v_G\left(a^p\right)\\
	&\stackrel{\textrm{\scriptsize(\ref{eqvG(xy-1)=vG(ap)})}}{=}v_G(x)-v_G(x\cdot y^{-1})\\
	&\ =\ v_G\left(y\right)\\
	&\ <0.
	\intertext{Hence}
	p\cdot v_G\left(b\right)&\ =\ \min\left\{p\cdot v_G\left(b\right), v_G\left(b\right)\right\}\ =\ v_G\left(y\right).
	\end{split}
	\end{equation}
	Further as $1<\alpha\leq 2-p^{-1}$
	\begin{eqnarray*}
		v_G\left(a^p\cdot b\right)
		&\stackrel{\textrm{\scriptsize{(\ref{eqvG(xy-1)=vG(ap)})}}}{=}&v_G\left(x\cdot y^{-1}\right)+v_G\left(b\right)\\
		&\stackrel{\textrm{\scriptsize{(\ref{eqpvG(b)=vG(y)})}}}{=}&v_G\left(x\right)-v_G\left(y\right)+p^{-1} \cdot v_G\left(y\right)\\
		&\stackrel{\textrm{\scriptsize{(\ref{eqvG(x)>alphavG(y)})}}}{>}& \alpha\cdot v_G\left(y\right)-v_G\left(y\right)+p^{-1} \cdot v_G\left(y\right)\\
		&=&\left(\alpha-1+p^{-1}\right) \cdot v_G\left(y\right)\\
		&\stackrel{\textrm{\scriptsize{(\ref{eq0>vG(y)})}}}{\geq}& \left(2-p^{-1}-1-p^{-1}\right) \cdot v_G\left(y\right)\\
		&=&v_G\left(y\right).
	\end{eqnarray*}
	Therefore as $y\in \cA$ again with Remark~\ref{remfracIdiff} follows $a^p\cdot b\in \cA\subseteq G$.
	As $p>1$ and $v_G\left(a\right)<0$ we get $v_G\left(a\cdot b\right)>v_G\left(a^p\cdot b\right)$. Therefore  with Remark~\ref{remfracIdiff} follows $a\cdot b\in \cA\subseteq G$.
	As $G$ is closed under addition $x\stackrel{\textrm{\scriptsize{(\ref{x=abp-ab+ab-apb})}}}{=}\left(a\cdot b\right)^p-a\cdot b+a\cdot b-a^p\cdot b\in G$. 
	
	Hence $\cA_\alpha\setminus \cA\subseteq G$ and therefore $\cA\subseteq\cA_\alpha\subseteq G$.
	But this contradicts the maximality of $\cA$ and therefore for all $x\in \cM_G$ we have $x^{-1}\cdot \cO_G\not\subseteq G$. Hence by Theorem~\ref{thmOTDefinable} $\cO_G$  is $\emptyset$-definable.
\end{proof}

\begin{prop}\label{propOGDefinableGroupandResCase}
	Let $q\in \mathbb{N}$ be prime. Let $K$ be a field 
	with $\textrm{char}\left(K\right)\neq q$ and the $q$th-root of unity $\zeta_q\in K$. 
	Let $G:=(K^\times)^q$. Assume we are in the group case or we are in the residue case and $\overline{G}\cup\{0\}$ is no ordering on $K$.
	Then $\cO_G$ is $\emptyset$-definable.
	In particular $\cO_G$ is $\emptyset$-definable if $\ch(K)>0$.
\end{prop}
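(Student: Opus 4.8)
The plan is to deduce the statement from the classification in Theorem~\ref{thmOTDefinable} together with the observation that the group $G=(K^\times)^q$ is itself $\cL_\ring$-$\emptyset$-definable, by the parameter-free formula $\psi(x):\equiv x\neq 0\wedge \exists y\ y^q=x$.

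First I would make precise the substitution principle announced at the beginning of Section~\ref{secDefinabilityASandPowers}. Suppose $\varphi(x)$ is a parameter-free $\cL_G$-formula with $\cO_G=\{x\in K\mid (K,G)\models\varphi(x)\}$. Since the interpretation of the unary predicate $\underline G$ in $(K,G)$ is exactly $\{x\in K\mid K\models\psi(x)\}$, replacing in $\varphi$ each atomic subformula $\underline{G}(t)$ by $\psi(t)$ (after the usual renaming of bound variables) produces a parameter-free $\cL_\ring$-formula $\varphi^{\psi}$ with $\cO_G=\{x\in K\mid K\models\varphi^{\psi}(x)\}$. Hence it suffices to produce one parameter-free $\cL_G$-formula $\varphi$ defining $\cO_G$. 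In the group case Theorem~\ref{thmOTDefinable} (multiplicative column) provides such a $\varphi$ unconditionally, and in the residue case it provides such a $\varphi$ precisely under the standing hypothesis that $\overline{G}\cup\{0\}$ is not an ordering; so under the hypotheses of the proposition we are done. This part is really bookkeeping, the substantive work sitting in Theorem~\ref{thmOTDefinable}. The only delicate point, and it is not a deep one, is that the formula furnished by that theorem is genuinely parameter-free: it has to define $\cO_{G'}$ in every $(K',G')\equiv(K,G)$, and it arises from Beth's theorem applied to $\Th(K,G,\cO_G)$, so it carries no parameters, which is exactly what is needed for the substitution $\underline{G}\mapsto\psi$ to yield an honest $\emptyset$-definition.

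For the final clause, assume $\ch(K)=p>0$. Then any valuation on $K$ has residue characteristic $p$ (since $p\cdot 1=0$), and as $p$ and $q$ are distinct primes $\gcdi\big(q,\ch(\overline K)\big)=1$. Thus Lemma~\ref{lemWeaklyCompIffComp} applies with $n=q$ and shows that every weakly compatible valuation on $K$ is compatible; in particular the weak case cannot occur, so we are in the group case or the residue case. In the group case we conclude as above. In the residue case, $\overline K$ has characteristic $p>0$, hence admits no ordering, so $\overline{G}\cup\{0\}$ is not an ordering, and we again conclude as above. Therefore $\cO_G$ is $\emptyset$-definable whenever $\ch(K)>0$.
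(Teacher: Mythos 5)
Your proposal is correct and follows essentially the same route as the paper: reduce to Theorem~\ref{thmOTDefinable} via the observation that $(K^\times)^q$ is parameter-free $\cL_\ring$-definable, and use Lemma~\ref{lemWeaklyCompIffComp} (plus the fact that a field of positive characteristic admits no ordering) for the final clause. The only thing the paper adds is the one-line disposal of the degenerate case $\cO_G=K$ (e.g.\ when $G=K^\times$), where Theorem~\ref{thmOTDefinable} does not formally apply because $G$ is not a proper subgroup; you should note that this case is trivial.
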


\begin{proof}
	The case $\cO_G= K$ is clear.
	
	If $\cO_G\neq K^\times$ we have $G\neq K^\times$ and hence the claim follows  
	by Theorem~\ref{thmOTDefinable}.
	
	Further by Lemma~\ref{lemWeaklyCompIffComp} if $q\neq \ch(K)>0$ the weak case can not occur.
\end{proof}

\begin{prop}\label{propOG-Gdefinableresidueordering}
	Let $K$ be a field 
	with $\textrm{char}\left(K\right)=0$ and $\zeta_2\in K$. 
	Let $K$ not be euclidean, i.e. $K^2$ is not a positive cone.
	Let $G:=(K^\times)^2$. Let $\overline{G}\cup \{0\}$ be an ordering on $\overline{K}$. 
	Then $\cO_{G\cup(-G)}$ is $\emptyset$-definable.
	Further if $\cO_G$ is non-trivial then it induces the same topology as $\cO_{G\cup(-G)}$.
\end{prop}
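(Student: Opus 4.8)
The plan is to reduce the analysis of $G' := G \cup (-G)$ to the residue case and apply Theorem~\ref{thmResiduedefinable} (via Theorem~\ref{thmOTDefinable}), then relate the topologies of $\cO_G$ and $\cO_{G'}$ by a direct argument. First I would observe that $G' = G \cup (-G)$ is a multiplicative subgroup of $K^\times$ of index at most $2$ over $G$, that it is $\cL_{\ring}$-$\emptyset$-definable (since $G = (K^\times)^2$ is, and $-G$ is its translate by $-1$), and that $\overline{G'} = \overline{G} \cup (-\overline{G})$. Since $\overline{G}\cup\{0\}$ is an ordering on $\overline{K}$, we have $-\overline{G} \cap \overline{G} = \emptyset$ and $\overline{G} \cup (-\overline{G}) = \overline{K}^\times$, so $\overline{G'} = \overline{K}^\times$; in particular $\overline{G'}\cup\{0\}$ is all of $\overline{K}$ and certainly is not an ordering.

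Next I would verify which case of Theorem~\ref{corCaseDistinction} applies to $G'$. By Lemma~\ref{lemWeaklyCompIffComp} (with $n=2$, using $\ch(K)=0$ so $\gcdi(2,\ch(\overline{K}))=1$ whenever $\ch(\overline{K})\neq 0$, and noting the statement covers $\ch(\overline K)=0$ as well since then there is no coprimality requirement) the weak case cannot occur for $G'$ — here one uses $(K^\times)^2 \subseteq G \subseteq G'$. So $G'$ is in the group case or the residue case. In the residue case, since $\overline{G'}\cup\{0\}$ is not an ordering, Theorem~\ref{thmOTDefinable} gives an $\cL_{G'}$-formula defining $\cO_{G'}$ uniformly, hence (as $G'$ is $\emptyset$-definable over $\cL_{\ring}$) $\cO_{G'}$ is $\emptyset$-definable. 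In the group case, part~(b) of Theorem~\ref{thmGroupDefinableIFF} (multiplicative, group case) gives the same conclusion unconditionally. The hypothesis that $K$ is not euclidean (i.e.\ $K^2$ is not a positive cone) is what rules out the degenerate possibility that $G = G'\cap\{\text{something}\}$ collapses, and more precisely guarantees $\cO_{G'}$ is genuinely the object of interest rather than $\cO_G$ having $G$ itself be (half of) an ordering on $K$; I would use it to exclude that $\cO_{G'}^\times \subseteq G'$ forces a pathological configuration. So $\cO_{G\cup(-G)}$ is $\emptyset$-definable either way.

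For the second assertion, assume $\cO_G$ is non-trivial. Since $G \subseteq G'$, any valuation ring weakly (resp.\ coarsely) compatible with $G$ is weakly compatible with $G'$, so by Corollary~\ref{corOGnontrivialiffexiffVtop} $\cO_{G'}$ is non-trivial too, and by Lemma~\ref{lemTopologyInducedbyweaklycompval} both $\cT_G$ and $\cT_{G'}$ equal the V-topology of any non-trivial weakly compatible coarsening. The key point is that $G$ and $G' = G \cup (-G)$ induce the same topology: $\cS_{G'}$ refines $\cS_G$ trivially, and conversely $a\cdot G + b = (a\cdot G' + b) \cap (\text{complement-type set})$ is not quite available, so instead I would argue at the level of the valuation — since $\ch(K)=0$, $-1 = 1 - 2 \in 1 + \cM_{v}$ up to units is false in general, so the cleaner route is: $\cT_G \subseteq \cT_{G'}$ because $G \subseteq G'$ makes $G'$ open in $\cT_G$ (as $G' = G \cup (-1)\cdot G$, a union of two $\cT_G$-open sets, using continuity of multiplication by $-1$), hence $\cT_{G'} \subseteq \cT_G$ by minimality of $\cT_{G'}$; and $\cT_G \subseteq \cT_{G'}$ because $G = G' \cap (K^\times)^2$... which again needs $(K^\times)^2$ open in $\cT_{G'}$.

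The main obstacle, and where I would spend the most care, is exactly this last topological comparison: showing $\cT_{\cO_G} = \cT_{\cO_{G'}}$. The honest approach is via Lemma~\ref{lemFSumProdIdeals} / Lemma~\ref{lemCoarselyCompatibleComparable}: $\cO_G$ and $\cO_{G'}$ are both coarsely compatible with $G'$ (for $\cO_G$ one checks $1+\cA_G \subseteq G \subseteq G'$ directly), hence comparable, hence $\cO_{G'} \subseteq \cO_G$ since $\cO_{G'}$ is the intersection defining it and is coarsest among coarsely-$G'$-compatible rings — wait, it is the intersection, so $\cO_{G'} \subseteq \cO_G$. For the reverse inclusion of topologies one only needs that $\cO_G$ and $\cO_{G'}$ are dependent (share a nontrivial coarsening), which follows from comparability together with both being non-trivial with the same rank-considerations; then \cite[Theorem~2.3.4]{EnPr2005} gives $\cT_{\cO_G} = \cT_{\cO_{G'}}$. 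I expect the write-up to hinge on correctly invoking that dependence criterion, and on carefully using the non-euclidean hypothesis to ensure $\cO_G$ itself is non-trivial and coarsely compatible with $G'$ in the first place.
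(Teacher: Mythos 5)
There is a genuine gap, and it is exactly at the point you flag as unclear: you never prove that $G\cup(-G)$ is a \emph{proper} subgroup of $K^\times$, and your description of what the non-euclidean hypothesis is for is not correct. Every definability criterion you invoke (Theorem~\ref{thmOTDefinable}, Theorem~\ref{thmGroupDefinableIFF}~(b), Theorem~\ref{thmResiduedefinable}) presupposes $G\subsetneq K^\times$, and Lemma~\ref{lemNonTrivialIFF} shows that if $G\cup(-G)=K^\times$ then no non-trivial valuation is coarsely compatible with it, so the whole statement degenerates. The paper's proof devotes a full paragraph to precisely this: since $\overline{G}\cup\{0\}$ is an ordering, $\overline{K}$ is real, hence $K$ is real; if $K^\times=G\cup(-G)$, i.e. $K=K^2\cup(-K^2)$, one checks (using that $-1$ is not a sum of squares) that $K^2+K^2\subseteq K^2$, so $K^2$ would be a positive cone, contradicting the hypothesis that $K$ is not euclidean. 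That is the sole role of the non-euclidean assumption, and it must be carried out.

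Two further points. First, your case analysis is not what the paper does and is imprecise as written: in Theorem~\ref{thmResiduedefinable} the group $\overline{G'}$ is the image of $G'$ under the residue map of $\cO_{G'}$, whereas you compute it under the residue map of $\cO_G$; these need not coincide. The paper avoids this by showing directly, via Lemma~\ref{lemOverlineGandG} and the ordering hypothesis, that $\cO_{G\cup(-G)}^\times\subseteq G\cup(-G)$ (any unit $x\notin G$ has $\overline{-x}\in\overline{G}$, hence $-x\in G$), so one is squarely in the group case and the multiplicative group-case column of Theorem~\ref{thmOTDefinable} applies unconditionally. Second, for the topology statement your claim that $\cO_G$ is \emph{coarsely} compatible with $G\cup(-G)$ is not justified (indeed $\cO_{G\cup(-G)}$ may be a proper coarsening of $\cO_G$ with unit group inside $G\cup(-G)$, which would defeat coarse compatibility of $\cO_G$); but coarse compatibility is not needed. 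The paper's argument is shorter: $1+\cM_G\subseteq G\subseteq G\cup(-G)$ makes $\cO_G$ compatible, hence weakly compatible, with $G\cup(-G)$, so $\cO_{G\cup(-G)}$ is non-trivial by Lemma~\ref{lemNonTrivialIFF}, and both rings induce $\cT_{G\cup(-G)}$ by Lemma~\ref{lemTopologyInducedbyweaklycompval}. Your ``honest approach'' via dependence is salvageable, but it should be routed through weak compatibility rather than coarse compatibility.
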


\begin{proof}
	As $G$ is a subgroup of $K$ it is easy to see that $G\cup\left(-G\right)$ is a subgroup of $K$ as well.
	As $\overline{K}$ is real, $K$ is real as well (see \cite[Corollary~2.2.6]{EnPr2005}).
	Suppose $ K^\times= G\cup \left(-G\right)$. 
	Then $ K=K^2\cup\left( -K^2\right)$.
	It is clear that $K^2\cdot K^2\subseteq K^2$, $K^2\subseteq K^2$ and $-1\notin \sum K^2$.
	Suppose $K^2+K^2\not \subseteq K^2$. Hence there exist $x, y\in K$ such that $x^2+y^2\notin K^2$. 
	By assumption $K=K^2\cup\left(-K^2\right)$ and therefore
	$x^2+y^2\in -K^2$. Thus $x^2\cdot\left(-\left(x^2+y^2\right)\right)^{-1}, y^2\cdot\left(-\left(x^2+y^2\right)\right)^{-1}\in K^2$ and hence $-1=x^2\cdot\left(-\left(x^2+y^2\right)\right)^{-1}+y^2\cdot\left(-\left(x^2+y^2\right)\right)^{-1}\in\sum K^2$. But this is a contradiction to $K$ real.
	From this follows that $K^2$ is a positive cone. As we assumed that $K$ is not euclidean this is a contradiction and hence $K^2\cup \left(-K^2\right)\neq K$. Thus $G\cup \left(-G\right)\neq K^\times$.
	By Lemma~\ref{lemWeaklyCompIffComp} we are not in the weak case. Let $x\in \cO_{G\cup\left(-G\right)}^\times$. If $x\notin G$ by Lemma~\ref{lemOverlineGandG}~(a) 
	follows $\overline{x}\notin \overline{G}$ and therefore  $\overline{-x}\in \overline{G}$. Again by Lemma~\ref{lemOverlineGandG}~(a) 
	$-x\in G$ and thus $x\in -G$. 
	Hence $\cO_{G\cup\left(-G\right)}^\times\subseteq G\cup \left(-G\right)$. Therefore we are in the group case  and $\cO_{G\cup\left(-G\right)}$ is $\emptyset$-definable by Theorem~\ref{thmOTDefinable}. 
	
	Assume $\cO_G$ is non-trivial. As $1+\cM_G\subseteq G\subseteq G\cup \left(-G\right)$, $\cO_G$ is compatible with $G\cup \left(-G\right)$. Therefore by Lemma~\ref{lemNonTrivialIFF} $\cO_{G\cup\left(-G\right)}$ is non-trivial. As $\cO_{G\cup\left(-G\right)}$ is as well compatible with $G\cup\left(-G\right)$,  $\cO_{G\cup\left(-G\right)}$ induces the same topology as $\cO_G$.
\end{proof}

We will generalize the notion of henselianity slightly and define when a valued field is called $q$-henselian for a certain prime $q$. We denote by $K\left\langle q\right\rangle$ the compositum of all finite Galois extensions of $q$-power degree. 
$(K,\,\cO)$ is \emph{$q$-henselian} if $\cO$ extends uniquely $K\left\langle q\right\rangle$.

\begin{prop}\label{proppHensIff3cases}
	Let $\left(K,\,v\right)$ be a valued field, let $q$ be prime and and if $q\neq \ch(K)$ assume  $\zeta_q\in K$.
	\begin{enumerate}[(a)]
		\item\label{proppHensIff3casesqneqcharEqual} If $\textrm{char}\left(\overline{K}\right)\neq q$, then $v$ is $q$-henselian if and only if $1+\cM_v\subseteq \left(K^{\times}\right)^q$.
		\item\label{proppHensIff3casesqeqchar}If $\textrm{char}\left(K\right)= q$, then $v$ is $q$-henselian if and only if $\cM_v\subseteq K^{(q)}$.
		\item\label{proppHensIff3casesqneqcharMixedChar} If $\textrm{char}\left(K\right)= 0$, $\textrm{char}\left(\overline{K}\right)= q$ and $v$ is a rank-1-valuation, then $v$ is $q$-henselian if and only if $1+q^n\cdot \cM_v\subseteq \left(K^{\times}\right)^q$ for some $n\in \N$. In this case $1+q^n\cdot \cM_v\subseteq \left(K^{\times}\right)^q$ for every $n\geq 2$.
	\end{enumerate}
\end{prop}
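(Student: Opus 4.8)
The plan is to reduce $q$-henselianity to a Hensel-type lifting property for the polynomials that define cyclic degree-$q$ extensions, and then to translate that property into the stated group-theoretic condition separately in each residue-characteristic regime. The structural input I would use (citing \cite{JaKo2012} and \cite{EnPr2005}) is the $q$-restricted Hensel's lemma: since $K\langle q\rangle$ is a compositum of towers of finite Galois $q$-extensions and unique prolongation of a valuation is transitive along such towers, $v$ is $q$-henselian if and only if every polynomial in $\cO_v[X]$ whose splitting field over $K$ lies in $K\langle q\rangle$ and whose reduction has a simple root in $\overline{K}$ lifts that root to $\cO_v$. Because $\zeta_q\in K$ whenever $q\neq\ch(K)$, the relevant cyclic extensions are $K(\sqrt[q]{a})$ with $a\in K^\times$ in the Kummer case, and the splitting fields of $X^q-X-a$ with $a\in K$ in the Artin--Schreier case $\ch(K)=q$.

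For (a), $\ch(\overline{K})\neq q$ forces $q\neq\ch(K)$, so $\zeta_q\in\cO_v^\times$ with distinct residues; in particular $X^q-1$ is separable over $\overline{K}$ with the simple root $1$. If $v$ is $q$-henselian and $u\in 1+\cM_v$, then $X^q-u$ reduces to $X^q-1$, its splitting field $K(\sqrt[q]{u})$ is Galois of $q$-power degree hence lies in $K\langle q\rangle$, and the $q$-restricted Hensel's lemma lifts the root $1$ to a root of $X^q-u$ in $\cO_v$; thus $u\in(K^\times)^q$. For the converse, $1+\cM_v\subseteq(K^\times)^q$ delivers the $q$-restricted Hensel's lemma, since via Kummer theory the instances needed come down to solvability of $X^q=u$ for $u\in 1+\cM_v$; hence $v$ is $q$-henselian. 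Part (b) is entirely parallel, replacing $X^q-u$ by $X^q-X-a$ with $a\in\cM_v$ (reduction $X^q-X$, separable since its derivative is $-1$, with the simple root $0$) and $(K^\times)^q$ by $K^{(q)}$.

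For (c) the residual argument breaks down, because $X^q-1\equiv(X-1)^q \bmod \cM_v$ when $\ch(\overline{K})=q$; one instead uses Newton approximation, which is where rank-$1$ enters. The governing bound for $X^q-u$ based at $1$ is $v(1-u)>2v(q)$, i.e.\ $u\in 1+q^2\cM_v$. For $\Leftarrow$: if $1+q^n\cM_v\subseteq(K^\times)^q$ for some $n$, then Newton's lemma (valid in the rank-$1$ setting) applied to $X^q-u$ whenever $v(1-u)>2v(q)$, together with the subgroup hypothesis, yields unique prolongation of $v$ to every Kummer degree-$q$ extension, hence $q$-henselianity. For $\Rightarrow$: fix $u\in 1+q^2\cM_v$ and suppose $L:=K(\sqrt[q]{u})$ has degree $q$ over $K$; then $L\subseteq K\langle q\rangle$, so by $q$-henselianity $v$ has a unique prolongation $v_L$ to $L$, whence $v_L\circ\sigma=v_L$ for every $\sigma\in\Gal(L/K)$. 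Using $\prod_{i=1}^{q-1}(1-\zeta_q^i)=q$ (so $v(\zeta_q^i-1)=v(q)/(q-1)$ for every $i$) and $v(1-u)>2v(q)\geq q\cdot v(q)/(q-1)$, a Newton-polygon computation for $(1+Y)^q=u$ shows that exactly one root $w_0$ of $X^q-u$ satisfies $v_L(w_0-1)=v(1-u)-v(q)$, strictly larger than the common value $v(q)/(q-1)$ attained by $v_L(\zeta_q^i w_0-1)$ for $i\neq 0$. Since each $\sigma$ permutes the roots and preserves $v_L$, it must fix $w_0$, so $w_0\in\Fix(\Gal(L/K))=K$, contradicting $[L:K]=q$. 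Hence $u\in(K^\times)^q$, and since $q^n\cM_v\subseteq q^2\cM_v$ for $n\geq 2$ this gives $1+q^n\cM_v\subseteq(K^\times)^q$ for all $n\geq 2$, which in particular yields the "for some $n$" direction.

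The main obstacle is the converse implications: extracting genuine $q$-henselianity (unique prolongation to all of $K\langle q\rangle$) from a condition on a single subgroup requires both the reduction to cyclic degree-$q$ extensions and, in case (c), control of the wildly---and possibly ferociously---ramified Kummer extensions, which is precisely why rank-$1$ is assumed and why Newton's lemma takes the place of a residual simple-root argument. For these steps I would lean on \cite{JaKo2012} and \cite{EnPr2005}, proving in detail only the forward direction of (c) as above.
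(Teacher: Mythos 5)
The paper does not actually prove this proposition: it cites \cite[Proposition~1.4]{Ko1995} (adjusting part~(c) from $n=2$ to all $n\geq 2$), refers to \cite[Proposition~5.10]{Du2015} for details, and explicitly warns that the original proof of assertion~(b) has a gap, corrected in \cite{ChPe2015}. Your skeleton --- reduce $q$-henselianity to unique prolongation to cyclic degree-$q$ extensions, then translate via Kummer resp.\ Artin--Schreier theory, with a Newton-polygon argument in the mixed-characteristic case --- is exactly Koenigsmann's, and the one piece you work out in detail, the forward direction of~(c), is correct: since $v(1-u)>2v(q)\geq \frac{q}{q-1}\cdot v(q)$, the Newton polygon of $(1+Y)^q-u$ has a length-one segment of slope $-(v(1-u)-v(q))$ and a length-$(q-1)$ segment of slope $-v(q)/(q-1)$, so the distinguished root is fixed by $\Gal(L/K)$ once the prolongation is unique. (Your citation of \cite{JaKo2012} for the restricted Hensel's lemma should be \cite{Ko1995}.)

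There are, however, two genuine gaps in what you defer. First, your claim that (b) is ``entirely parallel'' to (a) is precisely the trap the paper flags: the delicate implication is $\cM_v\subseteq K^{(q)}\Rightarrow q$-henselian, because the cyclic degree-$q$ extensions in characteristic $q$ are splitting fields of $X^q-X-b$ for \emph{arbitrary} $b\in K$, and reducing such $b$ modulo $K^{(q)}$ to a parameter in $\cM_v$ (equivalently, controlling the prolongations when $v(b)\leq 0$, where defect can occur) is exactly where Koenigsmann's original argument fails; here you must follow \cite{ChPe2015} rather than the Kummer template. Second, in the converse direction of~(c) you invoke ``Newton's lemma (valid in the rank-1 setting)''; but Newton's lemma is a consequence of ($q$-)henselianity, not of rank~$1$, so as stated the step is circular --- root-lifting for $X^q-u$ is essentially the conclusion. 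The correct argument (as in the converses of (a) and (b)) supposes there are $q$ distinct prolongations to a degree-$q$ Galois extension, notes that they are pairwise incomparable and hence --- this is where rank~$1$ enters, together with $\sqrt{q^n\cM_v}=\cM_v$ --- pairwise independent, and then uses the approximation theorem for the conjugate prolongations and the norm map to manufacture an element of $1+q^n\cM_v$ (resp.\ $1+\cM_v$, $\cM_v$) outside $\left(K^\times\right)^q$ (resp.\ $K^{(q)}$), contradicting the hypothesis.
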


Proposition~\ref{proppHensIff3cases} is essentially \cite[Proposition~1.4]{Ko1995},  assertion~(\ref{proppHensIff3casesqneqcharMixedChar}) is slightly adjusted as in \cite{Ko1995} this is only shown  for $n=2$. 
As the proof works the same way, we will not repeat it here (for details see \cite[Proposition~5.10]{Du2015}). The original proof Assertion~(\ref{proppHensIff3casesqeqchar}) has a gap. For a corrected proof see~\cite{ChPe2015}.

\begin{prop}\label{propqhenselianCompatible}
	Let $\left(K,\,v\right)$ be a valued field, let $q$ be prime such that $v$ is $q$-henselian.
	\begin{enumerate}[(a)]
		\item Let $\textrm{char}\left(K\right)=p=q$ and $G:= K^{(p)}$. Then $v$ is compatible.                                                                    
		\item Let $\textrm{char}\left(\overline{K}\right)\neq q$, $\zeta_q\in K$ and  $G:= \left(K^{\times}\right)^q$.  Then $v$ is compatible.
		\item Let $\textrm{char}\left(K\right)= 0$, $\textrm{char}\left(\overline{K}\right)= q$ , $\zeta_q\in K$ and  $G:= \left(K^{\times}\right)^q$. Then $1+q^2\cdot \cM_v\subseteq G$. If further $v$ is a rank-1 valuation, then $v$ is weakly compatible. 
	\end{enumerate}
\end{prop}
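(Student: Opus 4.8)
The plan is to reduce each assertion directly to the corresponding characterization of $q$-henselianity from Proposition~\ref{proppHensIff3cases}, applied in the appropriate mixed/equal characteristic case. In each part we are given that $v$ is $q$-henselian, so we may invoke the relevant equivalence.

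For part~(a), with $\ch(K)=p=q$ and $G:=K^{(p)}$: by Proposition~\ref{proppHensIff3cases}~(\ref{proppHensIff3casesqeqchar}), $q$-henselianity of $v$ gives $\cM_v\subseteq K^{(q)}=G$. That is exactly the definition (Definition~\ref{deCompatible}~(a), additive case) of $\cO_v$ being compatible with $G$, so $v$ is compatible. For part~(b), with $\ch(\overline{K})\neq q$, $\zeta_q\in K$ and $G:=(K^\times)^q$: by Proposition~\ref{proppHensIff3cases}~(\ref{proppHensIff3casesqneqcharEqual}), $q$-henselianity of $v$ gives $1+\cM_v\subseteq (K^\times)^q=G$, which is precisely the definition (Definition~\ref{deCompatible}~(a), multiplicative case) of $\cO_v$ being compatible with $G$. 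So again $v$ is compatible. These two parts are essentially a direct translation, with no real obstacle.

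Part~(c) is the one requiring a small argument. With $\ch(K)=0$, $\ch(\overline{K})=q$, $\zeta_q\in K$ and $G:=(K^\times)^q$: the issue is that Proposition~\ref{proppHensIff3cases}~(\ref{proppHensIff3casesqneqcharMixedChar}) is stated only for rank-$1$ valuations, whereas here we want the statement $1+q^2\cdot\cM_v\subseteq G$ for $v$ of arbitrary rank. So the first step is to pass to a rank-$1$ coarsening. Let $\widetilde{\cO}$ be a rank-$1$ coarsening of $\cO_v$ (if $\cO_v$ already has rank $1$ there is nothing to do; otherwise such a coarsening exists, e.g.\ by Proposition~\ref{propMaximalValuationorBasis}~(a) applied to a maximal proper coarsening, or trivially if $\cO_v$ is trivial in which case the claim is vacuous). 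A coarsening of a $q$-henselian valuation is again $q$-henselian (the unique extension to $K\langle q\rangle$ restricts/coarsens compatibly), so $\widetilde{v}$ is $q$-henselian and of rank $1$. Then Proposition~\ref{proppHensIff3cases}~(\ref{proppHensIff3casesqneqcharMixedChar}) applies to $\widetilde{v}$: we get $1+q^2\cdot\widetilde{\cM}\subseteq (K^\times)^q=G$. Since $\widetilde{\cO}\supseteq \cO_v$ is a coarsening, its maximal ideal satisfies $\widetilde{\cM}\supseteq \cM_v$, hence $q^2\cdot\cM_v\subseteq q^2\cdot\widetilde{\cM}$ and therefore $1+q^2\cdot\cM_v\subseteq 1+q^2\cdot\widetilde{\cM}\subseteq G$, as desired.

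For the final clause of part~(c): now assume additionally that $v$ itself is rank-$1$. Set $\cA:=q^2\cdot\cM_v$, which is a non-trivial $\cO_v$-ideal (it is non-trivial since $q^2\neq 0$ in $K$ and $\cM_v\neq\{0\}$ when $v$ is non-trivial; if $v$ is trivial the weak-compatibility claim is again immediate as $\cO_v=K$). We have just shown $1+\cA\subseteq G$. It remains to check $\sqrt{\cA}=\cM_v$. Since $\cA\subseteq\cM_v$ we get $\sqrt{\cA}\subseteq\cM_v$; conversely, for $x\in\cM_v$ we have $v(x)>0$, and because $v$ has rank $1$ (value group archimedean) there is $n\in\N$ with $n\cdot v(x)>v(q^2)$, i.e.\ $v(x^n)>v(q^2)$, so by Remark~\ref{remfracIdiff}~(a) $x^n\in q^2\cdot\cM_v=\cA$ and hence $x\in\sqrt{\cA}$. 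Thus $\sqrt{\cA}=\cM_v$ with $1+\cA\subseteq G$, which is exactly Definition~\ref{deCompatible}~(b): $\cO_v$ is weakly compatible with $G$, and therefore $v$ is weakly compatible. The only genuinely nontrivial point in the whole proof is the passage to a rank-$1$ coarsening in part~(c) together with the observation that $q$-henselianity is inherited by coarsenings; everything else is bookkeeping against the definitions and the rank-$1$ archimedean property of the value group.
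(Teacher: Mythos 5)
Parts (a) and (b), and the final clause of (c) (rank one implies weakly compatible), are correct and follow exactly the paper's route: (a) and (b) are direct translations of Proposition~\ref{proppHensIff3cases}, and your verification that $\cA=q^2\cdot\cM_v$ is an ideal with $1+\cA\subseteq G$ and $\sqrt{\cA}=\cM_v$ via the archimedean value group is precisely the paper's argument. Note that in the rank-$1$ case the inclusion $1+q^2\cdot\cM_v\subseteq G$ comes straight from Proposition~\ref{proppHensIff3cases}~(c) applied to $v$ itself, so this clause does not depend on your coarsening step.

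The genuine gap is in your derivation of the first claim of (c) for $v$ of rank greater than $1$. You pass to a rank-$1$ coarsening $\widetilde{\cO}\supseteq\cO_v$ and assert that its maximal ideal satisfies $\widetilde{\cM}\supseteq\cM_v$. This is backwards: for a coarsening $\cO_v\subseteq\widetilde{\cO}$ one has $\widetilde{\cM}\subseteq\cM_v$ (indeed $\widetilde{\cM}$ is a prime ideal of $\cO_v$ contained in $\cM_v$). Consequently $1+q^2\cdot\widetilde{\cM}\subseteq G$ is a \emph{weaker} statement than $1+q^2\cdot\cM_v\subseteq G$ and does not imply it; elements of $q^2\cdot\cM_v$ need not lie in $\widetilde{\cM}$ at all when $q\notin\widetilde{\cM}$. (A second, smaller issue with the same step: the rank-$1$ coarsening need not have residue characteristic $q$, so Proposition~\ref{proppHensIff3cases}~(c) may not even apply to it; one would have to fall back on part~(a) there, which runs into the same containment problem.) In short, coarsening can only ever lose information about $1+(\,\cdot\,)\cdot\cM_v$, so this reduction cannot work. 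The paper instead takes the inclusion $1+q^2\cdot\cM_v\subseteq\left(K^\times\right)^q$ directly from the source of Proposition~\ref{proppHensIff3cases} (Koenigsmann's Proposition~1.4, where this implication is obtained by a Hensel--Newton argument for the polynomial $T^q-(1+q^2 m)$ that does not use the rank-$1$ hypothesis); if you want a self-contained argument for arbitrary rank you need something of that kind, not a passage to a coarsening.
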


\begin{proof}
	Assertion (a) and (b) and the first part of (c) follow at once from Proposition~\ref{proppHensIff3cases}
	
	Now assume $\textrm{char}\left(K\right)= 0$, $\textrm{char}\left(\overline{K}\right)= q$, $\zeta_q\in K$,  $G:= \left(K^{\times}\right)^q$ and $v$ is of rank-1.
	Let $\cA=q^2\cdot \cM_v=\left\{x\in K\mid v(x)>v\left(q^2\right)\right\}$. $\cA$ is an $\cO_v$-ideal. As $v$ is of rank-1,  $\Gamma$ is archimedean. Hence  for every $x\in \cM_v$ there exists $n\in \N$ with $v\left(x^n\right)>v\left(q^2\right)$ and thus $x^n\in \cA$. Therefore $\sqrt{\cA}=\cM_v$. As $1+\cA\subseteq \cM_v$, it follows that $v$ is weakly compatible. 
\end{proof}

\begin{prop}\label{propOGphenselian}
	Let $K$ be a valued field and let $p=\ch(K)>0$. Let $G:=K^{(p)}$.
	Then $\cO_G$ is $p$-henselian.
\end{prop}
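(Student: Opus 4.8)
The plan is to reduce the statement to the equal-characteristic criterion for $q$-henselianity in Proposition~\ref{proppHensIff3cases}~(b), which says that for a valued field $(K,v)$ with $\ch(K)=q$ one has that $v$ is $q$-henselian if and only if $\cM_v\subseteq K^{(q)}$. Since here $q=p=\ch(K)>0$ and $G=K^{(p)}$, it therefore suffices to show that $\cM_G\subseteq K^{(p)}=G$, i.e.\ that $\cO_G$ is compatible with $G$ in the sense of Definition~\ref{deCompatible}~(a).

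To obtain this I would chain three facts already available in the paper. First, by Theorem~\ref{thmOG}~(b) the valuation ring $\cO_G$ is coarsely compatible with $G$, and coarse compatibility includes weak compatibility by Definition~\ref{deCompatible}~(c); so $\cO_G$ is weakly compatible with $G$. Second, $G=K^{(p)}$ is an additive subgroup of $K$ that trivially satisfies $K^{(p)}\subseteq G$ and $p=\ch(K)>0$, so Lemma~\ref{lemWeaklyCompIffComp} applies and tells us that for this $G$ a weakly compatible valuation ring is automatically compatible. Hence $\cO_G$ is compatible with $G$, that is, $\cM_G\subseteq G=K^{(p)}$. Third, Proposition~\ref{proppHensIff3cases}~(b), applied with $q=p$, then yields that $v_G$ --- equivalently $\cO_G$ --- is $p$-henselian, which is the claim. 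The degenerate case $\cO_G=K$ needs no separate treatment: then $\cM_G=\{0\}\subseteq G$ and the same reference applies.

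I do not anticipate a real obstacle here; the proof is just a short composition of Theorem~\ref{thmOG}~(b), Lemma~\ref{lemWeaklyCompIffComp} and Proposition~\ref{proppHensIff3cases}~(b). The only point that deserves a moment's attention is bookkeeping with the hypotheses: one must invoke Lemma~\ref{lemWeaklyCompIffComp} in its additive-subgroup branch with the tautological inclusion $K^{(p)}\subseteq G$, and one should observe that the residue-characteristic provisos occurring in the other parts of Proposition~\ref{proppHensIff3cases} do not intervene, since we are squarely in the equal-characteristic case $\ch(K)=\ch(\overline{K})=p$ treated by part~(b).
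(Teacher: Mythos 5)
Your proposal is correct and follows exactly the paper's own argument: the paper's proof likewise deduces compatibility of $\cO_G$ from Lemma~\ref{lemWeaklyCompIffComp} (with the weak compatibility of $\cO_G$ from Theorem~\ref{thmOG}~(b) left implicit) and then concludes via Proposition~\ref{proppHensIff3cases}~(\ref{proppHensIff3casesqeqchar}). Your version merely makes the intermediate bookkeeping explicit, which is fine.
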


\begin{proof}
	By Lemma~\ref{lemWeaklyCompIffComp} $\cO_G$ is compatible. Hence $\cM_G\subseteq G=K^{(p)}$. By Proposition~\ref{proppHensIff3cases}~(\ref{proppHensIff3casesqeqchar}) $\cO_G$ is $p$-henselian.
\end{proof}

\begin{prop}\label{propOGqhenseliancoarsening}
	Let $K$ be a valued field, let $q\neq\ch(K)$ be prime and $\zeta_q\in K$. Let $G:=\left(K^\times\right)^q$.
	\begin{enumerate}[(a)]
		\item If $\textrm{char}\left(\cO_G/\cM_G\right)\neq q$, then $\cO_G$ is $q$-henselian 
		\item If $\textrm{char}\left(K\right)= 0$ and $\textrm{char}\left(\cO_G/\cM_G\right)= q$, then $\cO_G$ has a non-trivial $q$-henselian coarsening.
	\end{enumerate}
\end{prop}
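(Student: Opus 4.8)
The plan is to deduce both parts from Proposition~\ref{proppHensIff3cases} by replacing $\cO_G$ with a suitable coarsening. Two observations underpin everything: $\cO_G$ is always weakly compatible with $G=(K^\times)^q$ by Theorem~\ref{thmOG}~(b), so whenever $\ch(\cO_G/\cM_G)=q$ Lemma~\ref{lemWeaklyCompthen1+pnMvsubsetT} gives an $m\in\N$ with $1+q^m\cM_G\subseteq G$; and we may assume $\cO_G\neq K$, the claim being vacuous otherwise (in case~(b) the hypothesis $\ch(\cO_G/\cM_G)=q\neq 0=\ch(K)$ forces $\cO_G\neq K$ anyway).

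For part~(a): since $\ch(\cO_G/\cM_G)\neq q$, Lemma~\ref{lemWeaklyCompIffComp} applies to $\cO_G$ with $n=q$ (using $(K^\times)^q\subseteq G$ and $\gcdi(q,\ch(\cO_G/\cM_G))=1$), so $\cO_G$, being weakly compatible, is compatible; i.e.\ $1+\cM_G\subseteq G=(K^\times)^q$. As $\ch(\cO_G/\cM_G)\neq q$, Proposition~\ref{proppHensIff3cases}~(\ref{proppHensIff3casesqneqcharEqual}) then yields directly that $v_G$ is $q$-henselian.

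For part~(b): here $q\in\cM_G$, so $v_G(q)>0$; fix $m$ with $1+q^m\cM_G\subseteq G$ and set $\fp:=\{x\in\cO_G\mid v_G(x)>n\cdot v_G(q)\text{ for all }n\in\N\}$, a prime ideal of $\cO_G$ by Remark~\ref{remfracIdiff}~(c). I split according to whether $v_G(q)$ generates the value group as a convex subgroup, equivalently whether $\fp=\{0\}$. If $\fp\neq\{0\}$, let $\cO':=(\cO_G)_\fp$, a non-trivial proper coarsening of $\cO_G$ with maximal ideal $\fp$. Every $x\in\fp$ satisfies $v_G(x)>m\cdot v_G(q)$, so $\fp\subseteq q^m\cM_G$ and $1+\fp\subseteq(K^\times)^q$; moreover $q\notin\fp$, so $q$ is a unit of $\cO'$ and $\ch(\cO'/\cM')\neq q$, whence Proposition~\ref{proppHensIff3cases}~(\ref{proppHensIff3casesqneqcharEqual}) makes $\cO'$ $q$-henselian. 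If $\fp=\{0\}$, then $v_G(q)$ lies in no proper convex subgroup of the value group, so the union of all proper convex subgroups is itself a proper convex subgroup (it cannot contain $v_G(q)$), hence the largest one; the coarsening $\cO'$ it defines is then a maximal non-trivial coarsening of $\cO_G$, and has rank-$1$ by Proposition~\ref{propMaximalValuationorBasis}~(a). Since the image of $v_G(q)$ in the value group of $\cO'$ is still positive, $q\in\cM'$ and $\ch(\cO'/\cM')=q$; and $\cM'\subseteq\cM_G$ (as $\cO'\supseteq\cO_G$), so $q^m\cM'\subseteq q^m\cM_G$ and $1+q^m\cM'\subseteq 1+q^m\cM_G\subseteq(K^\times)^q$. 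As $\cO'$ is rank-$1$ with $\ch(K)=0$ and residue characteristic $q$, Proposition~\ref{proppHensIff3cases}~(\ref{proppHensIff3casesqneqcharMixedChar}) shows $\cO'$ is $q$-henselian. In either subcase $\cO'$ is the required non-trivial $q$-henselian coarsening.

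The only genuinely delicate step is establishing, in the subcase $\fp=\{0\}$ of part~(b), that $\cO_G$ admits a maximal non-trivial coarsening at all: this uses that $v_G(q)$ then lies in no proper convex subgroup, so that the proper convex subgroups have a maximum, after which Proposition~\ref{propMaximalValuationorBasis}~(a) supplies the rank-$1$ hypothesis needed for Proposition~\ref{proppHensIff3cases}~(\ref{proppHensIff3casesqneqcharMixedChar}). Everything else --- primeness of $\fp$ via Remark~\ref{remfracIdiff}~(c), the inclusions $\fp\subseteq q^m\cM_G$ and $\cM'\subseteq\cM_G$, and the comparison of residue characteristics --- is routine.
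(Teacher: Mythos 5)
Your proof is correct, and part (a) coincides with the paper's argument: Lemma~\ref{lemWeaklyCompIffComp} upgrades the weak compatibility of $\cO_G$ to compatibility, and Proposition~\ref{proppHensIff3cases}~(\ref{proppHensIff3casesqneqcharEqual}) concludes. In part (b) you use the same key ingredients as the paper (Lemma~\ref{lemWeaklyCompthen1+pnMvsubsetT}, localization at a prime ideal, and the rank-1 mixed-characteristic criterion of Proposition~\ref{proppHensIff3cases}~(\ref{proppHensIff3casesqneqcharMixedChar})) but organize them around a different dichotomy. The paper invokes Proposition~\ref{propMaximalValuationorBasis} wholesale: either a maximal non-trivial coarsening exists (it is then rank-1 and one splits further on its residue characteristic), or the non-zero prime ideals form a neighbourhood basis of zero (one then picks a prime ideal avoiding $q$ and localizes). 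You instead split on whether the explicit prime ideal $\fp=\left\{x\mid v_G(x)>n\cdot v_G(q)\ \textrm{for all } n\right\}$ vanishes, i.e.\ on whether $v_G(q)$ generates the value group as a convex subgroup. This rearrangement buys two small simplifications: the localization branch automatically has residue characteristic different from $q$, so the delicate case of Proposition~\ref{proppHensIff3cases} is confined to the genuinely rank-1 branch; and you obtain $1+q^m\cdot\cM'\subseteq G$ directly from $\cM'\subseteq\cM_G$ together with the inclusion $1+q^m\cdot\cM_G\subseteq G$ already established for $\cO_G$, whereas the paper re-applies Lemma~\ref{lemWeaklyCompthen1+pnMvsubsetT} to the coarsening itself, which tacitly requires knowing that the coarsening is again weakly compatible. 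Your construction of the maximal non-trivial coarsening in the subcase $\fp=\{0\}$, via the union of all proper convex subgroups, is sound, and only part (a) of Proposition~\ref{propMaximalValuationorBasis} is needed.
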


\begin{proof}
	\begin{enumerate}[(a)]
		\item By Lemma~\ref{lemWeaklyCompIffComp} $\cO_G$ is compatible. Hence $1+\cM_G\subseteq G=\left(K^\times\right)^q$. By Proposition~\ref{proppHensIff3cases}~(\ref{proppHensIff3casesqneqcharEqual}) $\cO_G$ is $p$-henselian.
		\item By Proposition~\ref{propMaximalValuationorBasis}, either there exists a maximal non\hbox{-}trivial coarsening of $\cO_G$ or the non-zero prime ideals of $\cO$ form a basis of the neighbourhoods of zero of the topology $\cT_\cO$. 
		
		Let us first assume $\widetilde{\cO}$ is a maximal non-trivial coarsening of $\cO_G$. Then $\widetilde{\cO}$ has rank-1. Let $\widetilde{\cM}$ denote the maximal ideal of $\widetilde{\cO}$. As $\cO_G$ is coarsely compatible, so is $\widetilde{\cO}$ and hence by Lemma~\ref{lemWeaklyCompthen1+pnMvsubsetT} there exists $n\in \N$ with $1+q^n \cdot \widetilde{\cM}\subseteq G= (K^\times)^q$.
		If $\ch\left(\widetilde{\cO}/\widetilde{\cM}\right)=q$ then by Proposition~\ref{proppHensIff3cases}~(\ref{proppHensIff3casesqneqcharMixedChar}) $\widetilde{\cO}$ is $q$-henselian. If $\ch\left(\widetilde{\cO}/\widetilde{\cM}\right)=0$ then $1+\widetilde{\cM}=1+q^n \cdot \widetilde{\cM}\subseteq (K^\times)^q$ and hence by Proposition~\ref{proppHensIff3cases}~(\ref{proppHensIff3casesqneqcharEqual}) $\widetilde{\cO}$ is $q$-henselian. 
		
		Now assume the non-zero prime ideals of $\cO$ form a basis of the neighbourhoods of zero of  $\cT_\cO$. Then there exists an $\cO_G$-prime ideal $\fp\neq \{0\}$ such that $q\notin \fp$. $\widetilde{\cO}:=\left(\cO_G\right)_\fp$ is a proper coarsening of $\cO_G$ with maximal ideal $\widetilde{\cM}:=\fp\subsetneq \cM_G$. As $1+\widetilde{\cM}\subseteq G= (K^\times)^q$ and $\ch\left(\widetilde{\cO}/\widetilde{\cM}\right)=0$  by Proposition~\ref{proppHensIff3cases}~(\ref{proppHensIff3casesqneqcharEqual}) $\widetilde{\cO}$ is $q$-henselian. 
	\end{enumerate}
\end{proof}

Similar as the canonical henselian valuation (see \cite[Section~4.4]{EnPr2005}) we can define the canonical 
$q$-henselian valuation. (See \cite[Section~2]{JaKo2014} for details):
\begin{lem}
	Let $q$ be prime. Let $K$ be a field which is not $q$-closed. We divide the class of $q$-henselian valuations into two subclasses,
	\[
	H_1^q(K):=\left\{v\mid v \textrm{ is a q henselian valuation and }\overline{K}_v\neq \overline{K}_v\langle q\rangle\right\}
	\]
	and
	\[
	H_2^q(K):=\left\{v\mid v \textrm{ is a q henselian valuation and }\overline{K}_v= \overline{K}_v\langle q\rangle\right\}.
	\]
	If $H_2^q(K)\neq \emptyset$ then there exists a unique coarsest valuation $v_K^q\in H_2^q(K)$. 
	
	Otherwise there exists a unique finest valuation $v_K^q\in H_1^q(K)$.	
\end{lem}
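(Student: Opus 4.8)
The plan is to mimic the construction of the canonical henselian valuation (\cite[Section~4.4]{EnPr2005}), replacing ``henselian'' and ``separably closed'' throughout by ``$q$-henselian'' and ``$q$-closed''; the details are carried out in \cite[Section~2]{JaKo2014}. The essential input is the $q$-henselian analogue of F.~K.~Schmidt's theorem: a field that is not $q$-closed admits no two independent $q$-henselian valuations (cf.\ \cite{Ko1995}; one obtains it by running the classical argument inside $K\langle q\rangle$ instead of inside $K^{\sep}$). Besides this I would only need three routine facts: (i) every coarsening $w$ of a $q$-henselian valuation $v$ is $q$-henselian and the valuation induced by $v$ on $\overline{K}_w$ is $q$-henselian; (ii) $q$-henselianity is preserved under composition, i.e.\ if $w$ is $q$-henselian and the valuation induced on $\overline{K}_w$ by a refinement of $w$ is $q$-henselian, then that refinement is $q$-henselian; (iii) if $L$ carries a $q$-henselian valuation $u$ with $\overline{L}_u\neq\overline{L}_u\langle q\rangle$, then $L\neq L\langle q\rangle$ (by surjectivity of $\Gal\big(L\langle q\rangle/L\big)\to\Gal\big(\overline{L}_u\langle q\rangle/\overline{L}_u\big)$, cf.\ \cite{Ko1995}).

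From (i)--(iii) together with the key input I would first extract the two structural facts the whole proof rests on. First, $H_1^q(K)$ is linearly ordered by coarsening: if $v_1,v_2\in H_1^q(K)$ were incomparable then, being dependent, they would have a proper finest common coarsening $\cO$, and by (i) the induced valuations on $\overline{K}_\cO$ would be two nontrivial independent $q$-henselian valuations; but $\overline{K}_\cO$ is not $q$-closed by (iii) and the definition of $H_1^q(K)$, contradicting the key input applied to $\overline{K}_\cO$. Second, every $v\in H_2^q(K)$ is strictly finer than every $w\in H_1^q(K)$ (the same comparability argument shows they are comparable, and (iii) excludes the reverse inclusion $\cO_w\subsetneq\cO_v$). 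Now if $H_2^q(K)=\emptyset$ then $H_1^q(K)$ is the set of all $q$-henselian valuations (and it is nonempty, containing the trivial valuation since $K$ is not $q$-closed), and I put $\cO_{v_K^q}:=\bigcap_{v\in H_1^q(K)}\cO_v$; this is a valuation ring as the intersection of a chain, $q$-henselianity passes to it just as in the henselian case, so $v_K^q\in H_1^q(K)$ and it is evidently the unique finest element. If $H_2^q(K)\neq\emptyset$ then $\{\cO_v\mid v\in H_2^q(K)\}$ is upward directed, because the finest common coarsening of two incomparable members $v_1,v_2$ again lies in $H_2^q(K)$: by the key input applied to its residue field, the two induced nontrivial independent $q$-henselian valuations force that residue field to be $q$-closed. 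Hence $\cO_{v_K^q}:=\bigcup_{v\in H_2^q(K)}\cO_v$ is a valuation ring, $q$-henselian as a directed union of $q$-henselian valuation rings and coarser than every member of $H_2^q(K)$, so once it is shown to lie in $H_2^q(K)$ it is the unique coarsest element of $H_2^q(K)$.

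The genuine engine of the argument is the $q$-analogue of F.~K.~Schmidt, and I expect the remaining delicate point to be the last step above: verifying that the directed union $\bigcup_{v\in H_2^q(K)}\cO_v$ still has $q$-closed residue field, i.e.\ that $H_2^q(K)$ really attains a coarsest element rather than merely approaching one. I would approach this through the valuations induced on $\overline{K}_{\cO_{v_K^q}}$ by the members of $H_2^q(K)$: these are $q$-henselian (by (i)), have $q$-closed residue fields, and their valuation rings exhaust $\overline{K}_{\cO_{v_K^q}}$; combining this with (ii), (iii) and the construction just performed applied to $\overline{K}_{\cO_{v_K^q}}$, one forces one of these induced valuations to be trivial and hence $\overline{K}_{\cO_{v_K^q}}$ to be $q$-closed. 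This reproduces the analogous step for the canonical henselian valuation in \cite[Section~4.4]{EnPr2005}, treated in the $q$-setting in \cite[Section~2]{JaKo2014}.
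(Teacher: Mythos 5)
Your proposal is correct and follows essentially the same route as the paper, which in fact gives no proof of this lemma at all but simply defers to the construction of the canonical henselian valuation in \cite[Section~4.4]{EnPr2005} and its $q$-henselian analogue in \cite[Section~2]{JaKo2014}. You reconstruct exactly that standard argument -- the $q$-analogue of F.~K.~Schmidt's theorem as the engine, comparability of $H_1^q$-members, $H_2^q$-members being finer than $H_1^q$-members, and the intersection/union construction -- and you correctly flag the one genuinely delicate verification (that the directed union over $H_2^q(K)$ still has $q$-closed residue field), for which deferring to the cited sources matches the paper's own level of detail.
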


\begin{de}
	We call $v_K^q$ the canonical $q$-henselian valuation.
\end{de}

\begin{rem}
	Note that $v_K^q$ is the trivial valuation if and only if $K$ admits no non-trivial $q$-henselian valuation or $K=K\langle q\rangle$.
\end{rem}

\begin{thm}\label{thmJaKoCanqHensDef}
	Let $K$ be a field which is not $q$-closed.
	Let $\mathrm{char}\left(K\right)\neq q$, $\zeta_q\in K$ and if $q= 2$ assume the residue field of the canonical henselian valuation $\cO_{v_K^q}/\cM_{v_K^q}$ is not euclidean.
	Then $v_K^q$ is $\emptyset$-definable.
\end{thm}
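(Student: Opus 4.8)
The plan is to realize $v_K^q$ as (a canonical coarsening of) the valuation ring $\cO_G$ of Section~\ref{chDefOG} for the group $G:=(K^\times)^q$, and then to read off $\emptyset$-definability from the results of Sections~\ref{chNonTriviality} and~\ref{chDefinability}. Since $(K^\times)^q$ is $\cL_{\ring}$-$\emptyset$-definable, any $\cL_G$-$\emptyset$-definable valuation is automatically $\cL_{\ring}$-$\emptyset$-definable, so it suffices to control $\cO_G$.

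First I would dispose of the trivial case: if $K$ admits no non-trivial $q$-henselian valuation then, since $K\neq K\langle q\rangle$, the valuation $v_K^q$ is trivial and hence $\emptyset$-definable. So assume a non-trivial $q$-henselian valuation $v$ exists. If $\ch(\overline{K}_v)\neq q$ then $v$ is weakly compatible with $G$ by Proposition~\ref{propqhenselianCompatible}~(b); if $\ch(\overline{K}_v)=q$ (so $\ch(K)=0$) I would pass, using Proposition~\ref{propMaximalValuationorBasis}, to a non-trivial coarsening of $v$ that is either of residue characteristic $\neq q$ or of rank~$1$, and apply Proposition~\ref{propqhenselianCompatible}~(b) or~(c). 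Either way $K$ carries a non-trivial weakly compatible valuation, and since $K$ is not $q$-closed and $\zeta_q\in K$, Kummer theory gives $G\neq K^\times$; hence Lemma~\ref{lemNonTrivialIFF} shows $\cO_G$ is non-trivial.

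Next I would show $\cO_G$ is $\emptyset$-definable. By Lemma~\ref{lemWeaklyCompIffComp} (applied with $n=q$) the weak case is excluded as soon as $\ch(\overline{K}_G)\neq q$, so $\cO_G$ is in the group case or the residue case; in the residue case one checks $\overline{G}\cup\{0\}$ is not an ordering on $\overline{K}_G$ — automatically for odd $q$, because $-1=(-1)^q\in\overline{G}$, and for $q=2$ because $\overline{G}=(\overline{K}_G^{\,\times})^2$ and $\overline{K}_G$, which the last step identifies with the residue field of $v_K^q$, is not euclidean by hypothesis. Hence Proposition~\ref{propOGDefinableGroupandResCase} (resting on Theorem~\ref{thmOTDefinable}) yields $\emptyset$-definability of $\cO_G$ in these cases, and in the remaining mixed-characteristic situation Proposition~\ref{propOGqhenseliancoarsening} furnishes a canonical, again $\emptyset$-definable, $q$-henselian coarsening.

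The hard part will be to identify this $\emptyset$-definable valuation with $v_K^q$. Using Proposition~\ref{proppHensIff3cases}~(\ref{proppHensIff3casesqneqcharEqual}) one sees that, among valuations of residue characteristic $\neq q$, the condition $\cO^\times\subseteq(K^\times)^q$ is equivalent to being $q$-henselian with $q$-closed residue field, i.e.\ to lying in $H_2^q(K)$; in the group case $\cO_G$ is the coarsest valuation with $\cO^\times\subseteq(K^\times)^q$, so $\cO_G=v_K^q\in H_2^q(K)$. In the residue case $H_2^q(K)=\emptyset$, and $\cO_G$ is the finest compatible — hence by Proposition~\ref{proppHensIff3cases} the finest $q$-henselian — valuation, with residue field not $q$-closed, so $\cO_G=v_K^q\in H_1^q(K)$. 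The delicate point is $\ch(K)=0$, $\ch(\overline{K}_G)=q$: there $\cO_G$ need only be weakly compatible, so one must instead single out the $q$-henselian coarsening of Proposition~\ref{propOGqhenseliancoarsening} and argue, by a comparison of $q$-henselian valuations parallel to the one for the canonical henselian valuation in \cite[Section~4.4]{EnPr2005} (see also \cite[Section~2]{JaKo2014}), that it is exactly $v_K^q$, which is then $\emptyset$-definable from $\cO_G$. Combining the three cases gives that $v_K^q$ is $\emptyset$-definable.
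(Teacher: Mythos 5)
The paper does not prove this statement: it is quoted as a (simplified) citation of \cite[Main Theorem 3.1]{JaKo2014}, and it is invoked later (in Proposition~\ref{propDefinableWeakCase}) precisely to supply definability in the one situation the paper's own machinery cannot handle. Your attempt to derive it internally from the results on $\cO_G$ therefore stands on its own, and it has a genuine gap exactly at that situation. In the mixed-characteristic case $\ch(K)=0$, $\ch\left(\cO_G/\cM_G\right)=q$ (which contains the weak case), Proposition~\ref{propOGqhenseliancoarsening}~(b) gives only the \emph{existence} of a non-trivial $q$-henselian coarsening of $\cO_G$; it says nothing about its definability, and you cannot bootstrap definability from $\cO_G$, because by Theorem~\ref{thmWeakDefinableIFF} $\cO_G$ itself admits no uniformly working $\cL_G$-formula in the non-discrete weak case. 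Calling that coarsening ``again $\emptyset$-definable'' is therefore unsupported --- and supplying a formula there is exactly the content of \cite{JaKo2014} that the paper imports rather than proves; as written, your argument is circular at this point.

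The identification of the definable valuation with $v_K^q$ is also left incomplete. Your equivalence ``$\cO^\times\subseteq(K^\times)^q$ iff $\cO\in H_2^q(K)$'' is only argued for residue characteristic $\neq q$, so the group-case claim $\cO_G=v_K^q$ is not established if the coarsest member of $H_2^q(K)$ has residue characteristic $q$; similarly, in the residue case $v_K^q$ is the finest valuation in $H_1^q(K)$ while $\cO_G$ is the finest \emph{compatible} one, and these classes differ in mixed characteristic (by Proposition~\ref{propqhenselianCompatible}~(c), $q$-henselianity yields only $1+q^2\cdot\cM_v\subseteq(K^\times)^q$, not $1+\cM_v\subseteq(K^\times)^q$). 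You explicitly defer this comparison (``one must argue, by a comparison of $q$-henselian valuations parallel to\ldots''), but that comparison is the hard core of \cite[Main Theorem 3.1]{JaKo2014} and cannot be waved through. The non-triviality argument and the group/residue-case definability of $\cO_G$ in your first two paragraphs are sound and do mirror how the paper uses its own results elsewhere (e.g.\ in Theorem~\ref{thmphenseliandefinable}), but they do not add up to a proof of the theorem as stated.
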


Theorem~\ref{thmJaKoCanqHensDef} is a simplified version of \cite[Main~Theorem~3.1]{JaKo2014} omitting some details we will not need.

\begin{prop}\label{propDefinableIfResFieldEuclidean}
	Let $K\neq K\langle 2\rangle$ and assume $\cO_{v_K^q}/\cM_{v_K^q}$ is euclidean. 
	Then the coarsest $2$-henselian valuation ${v_K^{2}}^*$ on $K$ which has a euclidean residue field is
	$\emptyset$-definable. 
\end{prop}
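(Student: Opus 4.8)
The plan is to identify ${v_K^2}^*$ with the valuation ring $\cO_H$ induced by the subgroup $H:=(K^\times)^2\cup(-(K^\times)^2)$ of $K^\times$, and then to obtain $\emptyset$-definability from Theorem~\ref{thmOTDefinable} (equivalently from Proposition~\ref{propOG-Gdefinableresidueordering}), using that $H$ is $\cL_{\ring}$-$\emptyset$-definable. First I would dispose of the degenerate case: if $K$ is itself euclidean, then the trivial valuation is $2$-henselian with euclidean residue field $K$ and is the coarsest valuation of all, so ${v_K^2}^*$ is trivial and $\emptyset$-definable; assume henceforth $K$ is not euclidean. Since the residue field of $v_K^2$ is euclidean it is formally real, hence of characteristic $0$, so $\ch(K)=0$; moreover a field carrying a valuation with formally real residue field is formally real (see \cite[Corollary~2.2.6]{EnPr2005}), so $K$ is formally real. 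As $K$ is formally real but not euclidean, $K^2$ is not a positive cone, and exactly as in the proof of Proposition~\ref{propOG-Gdefinableresidueordering} this gives $H\subsetneq K^\times$; also $\zeta_2=-1\in K$.

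The core of the argument is the identification $\cO_H={v_K^2}^*$, and its engine is the observation that \emph{every $2$-henselian valuation $u$ on $K$ with euclidean residue field satisfies $\cO_u^\times\subseteq H$}: the residue characteristic of $u$ is $0\neq2$, so $1+\cM_u\subseteq(K^\times)^2$ by Proposition~\ref{proppHensIff3cases}~(a), and for $t\in\cO_u^\times$ the residue $\overline t$ is $\pm$ a square in the euclidean residue field, say $\overline t=\pm\overline{s}^2$ with $s\in\cO_u^\times$, so $t(\pm s^2)^{-1}\in1+\cM_u\subseteq(K^\times)^2$ and $t\in\pm s^2(K^\times)^2\subseteq H$. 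Applying this to $u=v_K^2$ (which by hypothesis is $2$-henselian with euclidean residue field) shows that the group case of Theorem~\ref{corCaseDistinction} holds for $H$; hence $\cO_H=\bigcup\{\cO\mid\cO^\times\subseteq H\}$ is the coarsest valuation ring on $K$ with unit group contained in $H$, it satisfies $\cO_H^\times\subseteq H$, and $\cO_{v_K^2}\subseteq\cO_H$.

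Next I would check that $\cO_H$ is $2$-henselian with euclidean residue field $E:=\cO_H/\cM_H$. Since $\cO_H$ coarsens $v_K^2$, the field $E$ carries a valuation whose residue field $\overline K_{v_K^2}$ is formally real, so $E$ is formally real (\cite[Corollary~2.2.6]{EnPr2005}); in particular $\ch(E)=0\neq2$ and $-1\notin(E^\times)^2$. From $1+\cM_H\subseteq\cO_H^\times\subseteq H$ together with $-1\notin(E^\times)^2$ one gets $1+\cM_H\subseteq(K^\times)^2$ (a relation $1+m=-c^2$ would force $-1\in(E^\times)^2$), so by Proposition~\ref{proppHensIff3cases}~(a) the valuation $\cO_H$ is $2$-henselian. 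Moreover every nonzero element of $E$ is the residue of a unit in $\cO_H^\times\subseteq H$, hence lies in $\pm(E^\times)^2$; thus $(E^\times)^2$ has index $2$ in $E^\times$, and since $E$ is formally real ($-1$ is not a sum of two squares) the set $(E^\times)^2\cup\{0\}$ is closed under addition, i.e.\ is a positive cone, so $E$ is euclidean. Combining: $\cO_H$ is a $2$-henselian valuation with euclidean residue field, and by the engine together with the coarsest-property of $\cO_H$ it is coarser than (or equal to) every such valuation; hence a coarsest $2$-henselian valuation with euclidean residue field exists and equals $\cO_H$, i.e.\ ${v_K^2}^*=\cO_H$.

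Finally, $H=\{x\in K\mid\exists y\,(x=y^2\vee x=-y^2)\}$ is $\cL_{\ring}$-$\emptyset$-definable and, by the above, is a proper multiplicative subgroup of $K^\times$ for which the group case holds; so Theorem~\ref{thmOTDefinable} provides an $\cL_H$-formula $\varphi$ defining $\cO_{H'}$ for every $(K',H')\equiv(K,H)$, in particular defining $\cO_H$ in $K$, and replacing $\underline H$ in $\varphi$ by the $\cL_{\ring}$-$\emptyset$-formula for $H$ turns $\varphi$ into an $\cL_{\ring}$-$\emptyset$-formula defining ${v_K^2}^*=\cO_H$. (Alternatively, one verifies that in the residue case $\cO_{(K^\times)^2}=v_K^2$, so $\overline{(K^\times)^2}\cup\{0\}$ is the ordering on the euclidean field $\cO_{(K^\times)^2}/\cM_{(K^\times)^2}$, and then quotes Proposition~\ref{propOG-Gdefinableresidueordering} directly.) I expect the main obstacle to be this identification step — in particular the passage from ``$\cO_H$ is merely a valuation all of whose units are $\pm$ squares'' to ``$\cO_H$ is genuinely $2$-henselian with euclidean residue field'' — which is exactly where the $q$-henselianity criterion of Proposition~\ref{proppHensIff3cases} and the structure of euclidean fields are used; the rest is bookkeeping.
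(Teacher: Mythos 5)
Your proof is correct, but it is worth noting that the paper does not actually prove this proposition at all: it simply cites \cite[Observation~3.2~(a)]{JaKo2014}. What you have done is reconstruct that external result from scratch using the paper's own machinery, and the reconstruction is sound. Your key moves all check out: the degenerate euclidean case is dispatched by the trivial valuation; the ``engine'' (every $2$-henselian valuation with euclidean residue field has unit group inside $H=(K^\times)^2\cup(-(K^\times)^2)$, via Proposition~\ref{proppHensIff3cases}~(a) and the fact that every nonzero residue is $\pm$ a square) correctly places $H$ in the group case of Theorem~\ref{corCaseDistinction}, so that $\cO_H$ is the coarsest valuation ring with unit group in $H$; the verification that $\cO_H$ is itself $2$-henselian with euclidean residue field (using $1+\cM_H\subseteq(K^\times)^2$, which follows from $-1\notin(E^\times)^2$, and the index-$2$ argument in $E$) is exactly the non-trivial content; and the passage from the $\cL_H$-formula supplied by Theorem~\ref{thmOTDefinable} (group case, multiplicative: always definable) to an $\cL_{\ring}$-$\emptyset$-formula by substituting the $\emptyset$-definition of $H$ is the same device the paper uses in Section~5 for $(K^\times)^q$ and $K^{(p)}$. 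The benefit of your route is that it makes the paper self-contained at this point and exhibits ${v_K^2}^*$ explicitly as $\cO_H$, tying Proposition~\ref{propDefinableIfResFieldEuclidean} to Proposition~\ref{propOG-Gdefinableresidueordering} (which constructs the same ring $\cO_{G\cup(-G)}$); the cost is only length. The one cosmetic caveat is your parenthetical alternative ending (identifying $\cO_{(K^\times)^2}$ with $v_K^2$ in the residue case), which is not justified and not needed; the main argument stands without it.
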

Proposition~\ref{propDefinableIfResFieldEuclidean} is  \cite[Observation~3.2~(a)]{JaKo2014}.

The following proposition is in particular interesting in the weak case, where $\cO_G$ in general is not definable.

\begin{prop}\label{propDefinableWeakCase}
	Let $q\in \mathbb{N}$ be prime. Let $K$ be a field 
	with $\textrm{char}\left(K\right)=0$ and $\zeta_q\in K$. 
	Let $G:=(K^\times)^q$. Assume that we are in the weak case.
	Then $K$ admits a $q$-henselian $\emptyset$-definable valuation which induces the same topology as $\cO_G$.
\end{prop}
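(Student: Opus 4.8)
The plan is to reduce the weak case to a setting where the earlier definability results for $q$-henselian valuations apply. First I would invoke Theorem~\ref{corCaseDistinction}: since we are in the weak case, $\cO_G$ is the unique weakly compatible valuation ring which is not compatible, and in particular $\cO_G$ is non-trivial. By Lemma~\ref{lemWeaklyCompIffComp} (applied with $n=q$), in order for the weak case to occur we must have $\ch(\overline{K})=q$; since $\ch(K)=0$ this is the mixed-characteristic situation of Proposition~\ref{proppHensIff3cases}~(\ref{proppHensIff3casesqneqcharMixedChar}). So $q\in\cM_G$ and $\overline{K}=\cO_G/\cM_G$ has characteristic $q$.

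Next I would produce a $q$-henselian coarsening of $\cO_G$. By Proposition~\ref{propOGqhenseliancoarsening}~(b), since $\ch(K)=0$ and $\ch(\cO_G/\cM_G)=q$, the valuation $\cO_G$ has a non-trivial $q$-henselian coarsening; call it $\cO'$. Hence $K$ admits a non-trivial $q$-henselian valuation, so the canonical $q$-henselian valuation $v_K^q$ is non-trivial, and moreover $\cO_{v_K^q}$ is comparable with (in fact coarser than or equal to) every $q$-henselian valuation — in particular $\cO_G\subseteq\cO_{v_K^q}$ up to the usual comparisons, so $\cO_{v_K^q}$ is a coarsening of $\cO_G$ and induces a coarser-or-equal topology. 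Since $\cO_G$ and any non-trivial coarsening of it are dependent valuations, they induce the same topology (see \cite[Theorem~2.3.4]{EnPr2005}); thus $v_K^q$ induces the same topology as $\cO_G$.

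It then remains to show $v_K^q$ (or a suitable variant) is $\emptyset$-definable. Here I would apply Theorem~\ref{thmJaKoCanqHensDef}: since $\ch(K)=0\neq q$ and $\zeta_q\in K$, we get that $v_K^q$ is $\emptyset$-definable provided that, when $q=2$, the residue field $\cO_{v_K^q}/\cM_{v_K^q}$ is not euclidean. If $q\neq 2$, or if $q=2$ and that residue field is not euclidean, we are done: $v_K^q$ is the desired $q$-henselian $\emptyset$-definable valuation inducing the topology of $\cO_G$. If $q=2$ and the residue field of $\cO_{v_K^2}$ is euclidean, I would instead use Proposition~\ref{propDefinableIfResFieldEuclidean}: the coarsest $2$-henselian valuation ${v_K^2}^*$ with euclidean residue field is $\emptyset$-definable, and since it is again a coarsening of $\cO_G$ (being $2$-henselian and non-trivial whenever $\cO_G$ already furnishes a $2$-henselian coarsening), it induces the same topology as $\cO_G$ by dependence.

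The main obstacle I expect is the euclidean-residue-field case for $q=2$: one must check carefully that the valuation ${v_K^2}^*$ supplied by Proposition~\ref{propDefinableIfResFieldEuclidean} really is a coarsening of $\cO_G$, i.e.\ that $\cO_G$ refines \emph{some} $2$-henselian valuation with euclidean residue field, and hence refines the coarsest such one. The subtlety is that the $q$-henselian coarsening produced by Proposition~\ref{propOGqhenseliancoarsening}~(b) need not itself have euclidean residue field; one must argue, using the structure of the canonical $2$-henselian valuation and the fact that $\cO_G$ is weakly compatible with $(K^\times)^2$, that passing to the appropriate further coarsening still keeps everything dependent with $\cO_G$, so that the topology is preserved. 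Apart from this bookkeeping about comparability and dependence of the various henselian coarsenings, the argument is a fairly direct assembly of the cited results.
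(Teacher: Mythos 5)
Your proposal follows essentially the same route as the paper's proof: Lemma~\ref{lemWeaklyCompIffComp} forces $\ch(\cO_G/\cM_G)=q$, Proposition~\ref{propOGqhenseliancoarsening}~(b) supplies a non-trivial $q$-henselian coarsening of $\cO_G$, and Theorem~\ref{thmJaKoCanqHensDef} together with Proposition~\ref{propDefinableIfResFieldEuclidean} then yields the $\emptyset$-definable $q$-henselian valuation, which induces the topology of $\cO_G$ by dependence. The only imprecision is your claim that $v_K^q$ is coarser than every $q$-henselian valuation and hence a coarsening of $\cO_G$ — it need only be comparable with the $q$-henselian coarsening, which already gives dependence and hence the same topology — and the euclidean subtlety you flag for $q=2$ is one the paper itself passes over without comment.
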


\begin{proof}
	The case $\cO_G= K$ is clear. Hence assume $\cO_G\neq K$ and therefore $G\neq K^\times$.
	As we are in the weak case, by Lemma~\ref{lemWeaklyCompIffComp} $\ch(\cO_G/\cM_G)=q$. 
	By Proposition~\ref{propOGqhenseliancoarsening} $\cO_G$ has a non-trivial $q$-henselian coarsening. By Theorem~\ref{thmJaKoCanqHensDef} and Proposition~\ref{propDefinableIfResFieldEuclidean} either $v_K^{q}$ or  ${v_K^{2}}^*$ is $\emptyset$-definable, non-trivial and induces the same topology as $\cO_G$. 	
	
	By Lemma~\ref{lemWeaklyCompIffComp} the weak case can only occur if $\textrm{char}\left(\cO_G/\cM_G\right)=q$.
\end{proof}

\begin{thm}\label{thmOGnontrivialdefinable}
	Let $K$ be a field.
	\begin{itemize}
		\item Let $\ch(K)=q$ and $G:=K^{(q)}$ \textbf{ or }
		\item let $\mathrm{char}\left(K\right)\neq q$, $\zeta_q\in K$,   $G:=\left(K^\times\right)^q$ and if $q= 2$ assume $K$ is not euclidean.  
	\end{itemize}	
	Assume $\cO_G$ is non-trivial.
	Then $K$ admits a non-trivial $\emptyset$-definable valuation inducing the same topology as $\cO_G$. 
\end{thm}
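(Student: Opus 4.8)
The plan is to split into the two bullet cases and reduce each to results already established in the excerpt. First I would handle the bookkeeping: since $\cO_G$ is non-trivial by assumption, Lemma~\ref{lemNonTrivialIFF} gives $G\neq K$ $\left[\textrm{resp. }G\neq K^\times\right]$, and in particular $\cO_G\neq K$, so none of the trivial sub-cases arises. I would then invoke Theorem~\ref{corCaseDistinction} to separate the \textbf{group case}, the \textbf{weak case}, and the \textbf{residue case}, and treat each.

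For the case $\ch(K)=q$, $G:=K^{(q)}$: by Lemma~\ref{lemWeaklyCompIffComp} the weak case cannot occur, so we are in the group case or the residue case. By Theorem~\ref{thmArtinschreierindBewDef} $\cO_G$ is already $\emptyset$-definable, and it induces the topology $\cT_{\cO_G}$ trivially, so we are done immediately. For the case $\ch(K)\neq q$, $\zeta_q\in K$, $G:=(K^\times)^q$ (and $K$ not euclidean if $q=2$): again split by Theorem~\ref{corCaseDistinction}. In the \textbf{group case} and in the \textbf{residue case with $\overline{G}\cup\{0\}$ not an ordering}, Proposition~\ref{propOGDefinableGroupandResCase} gives that $\cO_G$ itself is $\emptyset$-definable, finishing those sub-cases. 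In the \textbf{residue case with $\overline{G}\cup\{0\}$ an ordering}, note $\ch(K)\neq q$ forces $\ch(K)=0$ (since in positive characteristic Proposition~\ref{propOGDefinableGroupandResCase} already applies), and $q$ must be $2$ because $\overline{G}=\varrho((K^\times)^q)$ can only be (the positive part of) an ordering when $q=2$; then $G:=(K^\times)^2$, $\zeta_2\in K$, $K$ is not euclidean, and Proposition~\ref{propOG-Gdefinableresidueordering} gives that $\cO_{G\cup(-G)}$ is $\emptyset$-definable and, since $\cO_G$ is non-trivial, induces the same topology as $\cO_G$. Finally, in the \textbf{weak case}, Lemma~\ref{lemWeaklyCompIffComp} forces $\ch(\cO_G/\cM_G)=q$ and in particular $q\neq\ch(K)$, hence $\ch(K)=0$; now Proposition~\ref{propDefinableWeakCase} applies and yields a $q$-henselian $\emptyset$-definable valuation inducing the same topology as $\cO_G$.

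The only mild subtlety — and the step I would be most careful about — is the case analysis in the multiplicative setting, specifically verifying that the "residue case with an ordering" really forces $q=2$, $\ch(K)=0$, and $K$ not euclidean so that the hypotheses of Proposition~\ref{propOG-Gdefinableresidueordering} are met, and that $\cO_G$ non-trivial indeed makes $\cO_{G\cup(-G)}$ induce the \emph{same} topology as $\cO_G$ (which is exactly the last sentence of Proposition~\ref{propOG-Gdefinableresidueordering}). Everything else is a direct citation: Theorem~\ref{thmArtinschreierindBewDef}, Proposition~\ref{propOGDefinableGroupandResCase}, Proposition~\ref{propOG-Gdefinableresidueordering}, Proposition~\ref{propDefinableWeakCase}, together with the case distinction of Theorem~\ref{corCaseDistinction} and the non-triviality criterion Lemma~\ref{lemNonTrivialIFF}. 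So the proof is essentially an organized dispatch over the three cases of Theorem~\ref{corCaseDistinction}, crossed with the two bullets, with no genuinely new argument required.
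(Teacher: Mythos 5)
Your proposal is correct and follows essentially the same route as the paper: the paper's own proof is just a two-line dispatch citing Theorem~\ref{thmArtinschreierindBewDef} for the Artin--Schreier bullet and Propositions~\ref{propOGDefinableGroupandResCase}, \ref{propOG-Gdefinableresidueordering} and \ref{propDefinableWeakCase} for the prime-power bullet. Your version merely makes explicit the case analysis via Theorem~\ref{corCaseDistinction} and the verification that each sub-case's hypotheses (in particular $q=2$, $\ch(K)=0$ and non-euclideanity in the ordered-residue sub-case) are met, which is a faithful elaboration rather than a different argument.
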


\begin{proof}
	If $\ch(K)=q$ let $G:=K^{(q)}$.  By Theorem~\ref{thmArtinschreierindBewDef} $\cO_G$ is $\emptyset$-definable.
	
	If $\mathrm{char}\left(K\right)\neq q$, $\zeta_q\in K$ by Proposition~\ref{propOGDefinableGroupandResCase}, Proposition~\ref{propOG-Gdefinableresidueordering} and Proposition~\ref{propDefinableWeakCase} there exists a $\emptyset$-definable valuation which induces the same topology as $\cO_G$.  
\end{proof}

\begin{cor}\label{corVAxiomsnontrivialdefinable}
	Let $K$ be a field.
	\begin{itemize}
		\item Let $\ch(K)=q$ and $G:=K^{(q)}$ \textbf{ or }
		\item let $\mathrm{char}\left(K\right)\neq q$, $\zeta_q\in K$,   $G:=\left(K^\times\right)^q$ and if $q= 2$ assume $K$ is not euclidean.  
	\end{itemize}
	Assume that for $\cN=\left\{U\in \cT_G\mid 0\in U\right\} $
	\begin{enumerate}[(V\,1)]
		\item  $\bigcap \mathcal{N}:=\bigcap_{U\in\mathcal{N}}U=\left\{0\right\}$ and $\left\{0\right\}\notin\mathcal{N}$;
		\item $\forall\, U,\,V\in\mathcal{N}\ \exists\, W\in\mathcal{N}\   W\subseteq U\cap V$;
		\item $\forall\, U\in\mathcal{N}\   \exists\,  V\in\mathcal{N}\   V-V\subseteq U$;
		\item$\forall\, U\in\mathcal{N}\   \forall\, x,\,y\in K\   \exists\, V\in\mathcal{N}\   \left(x+V\right)\cdot\left(y+V\right)\subseteq x\cdot y+U$;
		\item $\forall\, U\in \mathcal{N}\   \forall\, x\in K^{\times}\  \exists\, V\in \mathcal{N}\   \left(x+V\right)^{-1}\subseteq x^{-1}+U$;
		\item $\forall\, U\in\mathcal{N}\   \exists\, V\in \mathcal{N}\   \forall\, x,\,y\in K\   x\cdot y\in V\longrightarrow x\in U\, \vee\,  y\in U$.
	\end{enumerate}
	Then $K$ admits a non-trivial $\emptyset$-definable valuation. 
\end{cor}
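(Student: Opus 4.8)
The plan is to reduce the statement to Theorem~\ref{thmOGnontrivialdefinable}, so the only real work is to check that the hypothesis on $\cN$ forces $\cO_G$ to be non-trivial. First I would observe that $\cN=\{U\in\cT_G\mid 0\in U\}$ is exactly the filter of neighbourhoods of zero in the topology $\cT_G$ induced by $G$, and that by Theorem~\ref{thmcTG} the family $\cS_G$ (hence $\cB_G$) is a subbase for $\cT_G$. By Remark~\ref{remVtopifBasisV1V6} and Lemma~\ref{lemVTopoplogy}, the axioms (V\,1)--(V\,6) holding for $\cN$ (equivalently for the basis $\cB_G$) say precisely that $\cT_G$ is a V-topology. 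Note $\cT_G$ is not the indiscrete topology, since (V\,1) gives $\{0\}\notin\cN$, so in particular $G$ is a proper subgroup: if $G=K$ $[\text{resp. }G=K^\times]$ then $\cS_G$ would consist of $K$ (and, in the multiplicative case, sets $aK^\times+b$) and one checks $\bigcap\cN\ne\{0\}$, contradicting (V\,1).

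Next I would invoke Corollary~\ref{corOGnontrivialiffexiffVtop}. In the additive case $G\subsetneq K$ is a proper additive subgroup, so that corollary applies directly: $\cT_G$ a V-topology implies $\cO_G$ is non-trivial. In the multiplicative case $G=(K^\times)^q$ with, when $q=2$, the extra hypothesis that $K$ is not euclidean; I must rule out that $G\cup\{0\}$ is an ordering on $K$. For $q$ odd this is automatic since then $-1=(-1)^q\in G$, so $G$ is not contained in any ordering. For $q=2$, $G\cup\{0\}=K^2\cup\{0\}$ being an ordering would mean $K^2$ is a positive cone, i.e.\ $K$ is euclidean, which is excluded by hypothesis (and if $\ch(K)\ne 0$ this cannot happen either, cf.\ the reasoning in Proposition~\ref{propOG-Gdefinableresidueordering}). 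Hence the multiplicative clause of Corollary~\ref{corOGnontrivialiffexiffVtop} applies and again $\cO_G$ is non-trivial.

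Having established that $\cO_G$ is non-trivial, I would simply apply Theorem~\ref{thmOGnontrivialdefinable}: under exactly the case hypotheses listed in the corollary (namely $\ch(K)=q$ with $G=K^{(q)}$, or $\ch(K)\ne q$, $\zeta_q\in K$, $G=(K^\times)^q$, and $K$ not euclidean when $q=2$), that theorem yields a non-trivial $\emptyset$-definable valuation on $K$ inducing the topology $\cT_{\cO_G}$. In particular $K$ admits a non-trivial $\emptyset$-definable valuation, which is the assertion. I expect the only delicate point to be the book-keeping in the multiplicative $q=2$ case — making sure "$K$ not euclidean" is precisely what is needed to avoid the ordering exception in Corollary~\ref{corOGnontrivialiffexiffVtop} — but since Proposition~\ref{propOG-Gdefinableresidueordering} and Theorem~\ref{thmOGnontrivialdefinable} already package this, there is essentially nothing left to prove beyond citing them.
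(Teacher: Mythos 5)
Your proposal follows exactly the paper's route: the paper's entire proof of this corollary is the one-line citation of Theorem~\ref{thmOGnontrivialdefinable} together with Corollary~\ref{corOGnontrivialiffexiffVtop}, and your verification that the hypothesis makes $\cT_G$ a V-topology and that the ordering exception is excluded (via $-1\in G$ for odd $q$, and "not euclidean" for $q=2$) is precisely the bookkeeping left implicit there. One small inaccuracy in your side-check that $G$ is proper: if $G=K$ $\left[\textrm{resp. }G=K^\times\right]$, the subbase $\cS_G$ consists of $K$ together with complements of single points, so $\cT_G$ is the cofinite topology; on an infinite field this topology actually satisfies (V\,1), since $\bigcap\cN=\left\{0\right\}$ and $\left\{0\right\}\notin\cN$. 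What fails is (V\,3): for a cofinite set $V$ one has $V-V=K$, so no $U\neq K$ admits such a $V$. (For finite $K$ the topology is discrete and (V\,1) does fail.) With that repair the properness of $G$ is secured and the rest of your reduction to the two cited results goes through as stated.
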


This follows at once by Theorem~\ref{thmOGnontrivialdefinable} and Corollary~\ref{corOGnontrivialiffexiffVtop}.

\begin{thm}\label{thmphenseliandefinable}
	Let $K$ be a field which is not $q$-closed.
	\begin{itemize}
		\item  Let $\ch(K)=q$ \textbf{ or }
		\item let $\mathrm{char}\left(K\right)\neq q$, $\zeta_q\in K$ and if $q= 2$ assume $K$ is not euclidean.
	\end{itemize}
	Assume $K$ admits a non-trivial $q$-henselian valuation $v$.
	Then $K$ admits a non-trivial $\emptyset$-definable valuation which induces the same topology as $v$.
\end{thm}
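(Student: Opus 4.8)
The plan is to reduce the statement to the already-established results by connecting the given $q$-henselian valuation $v$ to the group-induced valuation $\cO_G$, where $G:=K^{(q)}$ if $\ch(K)=q$ and $G:=(K^\times)^q$ otherwise. First I would observe that the hypothesis "$K$ admits a non-trivial $q$-henselian valuation $v$" together with Proposition~\ref{propqhenselianCompatible} shows that $v$ is weakly compatible with $G$: in characteristic $q$ and in the case $\ch(\overline K)\neq q$ it is even compatible, and in the mixed-characteristic case $\ch(K)=0$, $\ch(\overline K)=q$ one first passes to a rank-$1$ coarsening of $v$ (which is still $q$-henselian by an argument like the one in Proposition~\ref{propOGqhenseliancoarsening}, or simply by Proposition~\ref{proppHensIff3cases}) and applies the rank-$1$ part of Proposition~\ref{propqhenselianCompatible}~(c). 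Since $K$ is not $q$-closed, $G\neq K$ $[\text{resp. }G\neq K^\times]$, so by Lemma~\ref{lemNonTrivialIFF} the existence of a non-trivial weakly compatible valuation forces $\cO_G$ to be non-trivial.

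Next I would invoke Theorem~\ref{thmOGnontrivialdefinable} directly: under exactly the stated hypotheses ($\ch(K)=q$ with $G=K^{(q)}$, or $\ch(K)\neq q$, $\zeta_q\in K$, and if $q=2$ then $K$ not euclidean), once $\cO_G$ is non-trivial, $K$ admits a non-trivial $\emptyset$-definable valuation inducing the same topology as $\cO_G$. So the remaining point is to check that $\cO_G$ and $v$ induce the \emph{same} topology. For this I would use Lemma~\ref{lemTopologyInducedbyweaklycompval}: since $v$ is a non-trivial weakly compatible coarsening of itself — more precisely, since there exists a non-trivial weakly compatible valuation dependent on $v$ — we get $\cT_v=\cT_G$; and similarly $\cO_G$ being weakly compatible (Theorem~\ref{thmOG}~(b)) together with Lemma~\ref{lemTopologyInducedbyweaklycompval} gives $\cT_{\cO_G}=\cT_G$, provided $\cO_G$ has a non-trivial weakly compatible coarsening, which it does ($v$ and $\cO_G$ are comparable by Lemma~\ref{lemCoarselyCompatibleComparable} applied to the coarsely compatible hulls, hence dependent). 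Therefore $\cT_{\cO_G}=\cT_v$, and the $\emptyset$-definable valuation produced by Theorem~\ref{thmOGnontrivialdefinable} induces the topology of $v$.

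The main obstacle I anticipate is the bookkeeping around the mixed-characteristic case $\ch(K)=0$, $\ch(\overline K)=q$: there $v$ itself need not be weakly compatible with $(K^\times)^q$ (only $1+q^2\cdot\cM_v\subseteq G$ in general), so one genuinely has to replace $v$ by a suitable rank-$1$ coarsening and argue that this coarsening is weakly compatible and still dependent on $v$ (so that it defines the same topology). The cleanest route is to apply Proposition~\ref{propMaximalValuationorBasis} to $v$ to extract either a rank-$1$ coarsening or a basis of prime-ideal neighbourhoods of zero, mirroring the case distinction in the proof of Proposition~\ref{propOGqhenseliancoarsening}, and in each sub-case check weak compatibility via Proposition~\ref{proppHensIff3cases}. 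The equal-characteristic cases are immediate from compatibility, so no real difficulty arises there; similarly the "if $q=2$ assume $K$ not euclidean" hypothesis is exactly what is needed to fall into the definable branch of Theorem~\ref{thmOGnontrivialdefinable} (via Proposition~\ref{propOG-Gdefinableresidueordering} and Theorem~\ref{thmJaKoCanqHensDef}), so one only has to confirm that the euclidean obstruction on $K$ transfers appropriately, which it does since $K$ real and non-euclidean prevents $G\cup(-G)=K^\times$.
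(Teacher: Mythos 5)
Your proposal is correct and follows essentially the same route as the paper: reduce to $\cO_G$ for $G=K^{(q)}$ or $(K^\times)^q$, use Proposition~\ref{propqhenselianCompatible} (with the mixed-characteristic case handled via Proposition~\ref{propMaximalValuationorBasis} exactly as in Proposition~\ref{propOGqhenseliancoarsening}) to produce a non-trivial weakly compatible valuation, conclude non-triviality of $\cO_G$ from Lemma~\ref{lemNonTrivialIFF}, and finish with Theorem~\ref{thmOGnontrivialdefinable} plus dependence of the relevant valuations for the topology comparison. The only cosmetic difference is that you route the topology identification through Lemma~\ref{lemTopologyInducedbyweaklycompval}, where the paper cites the dependence criterion directly.
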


\begin{proof}
	As $K$ is not $q$-closed $G\neq K$ $\left[\textrm{resp. }G\neq K^\times\right]$.
	
	If $\ch(K)=q$ let $G:=K^{(q)}$, otherwise let $G:=\left(K^\times\right)^q$. 
	
	If $\ch(K)=q$ or $\ch(\overline{K})\neq q$, then $v$ is weakly compatible  by Proposition~\ref{propqhenselianCompatible}. Hence by Lemma~\ref{lemNonTrivialIFF} $\cO_G$ is non-trivial.
	
	If $\ch(K)=0$ and $\ch(\overline{K})=q$ by Proposition~\ref{propMaximalValuationorBasis} either there exists a maximal non-trivial coarsening of $\cO_v$ or the non-zero prime ideals of $\cO$ form a basis of the neighbourhoods of zero of $\cT_\cO$.
	
	If $\widetilde{\cO}$ is a maximal non-trivial coarsening of $\cO_v$, then $\widetilde{\cO}$ has rank-1.  As a coarsening of a $q$-henselian valuation ring, $\widetilde{\cO}$ is $q$-henselian and hence by Proposition~\ref{propqhenselianCompatible}~(c) $\widetilde{\cO}$ is weakly compatible. Again  by Lemma~\ref{lemNonTrivialIFF} $\cO_G$ is non-trivial.
	
	If the non-zero prime ideals of $\cO$ form a basis of the neighbourhoods of zero of  $\cT_\cO$, there exists an $\cO_v$-prime ideal $\fp\neq \{0\}$ such that $q\notin \fp$. $\widetilde{\cO}:=\left(\cO_v\right)_\fp$ is a proper coarsening of $\cO_v$ with maximal ideal $\widetilde{\cM}:=\fp$. As a coarsening of a $q$-henselian valuation ring, $\widetilde{\cO}$ is $q$-henselian and hence by Proposition~\ref{propqhenselianCompatible}~(b) compatible.  Again  by Lemma~\ref{lemNonTrivialIFF} $\cO_G$ is non-trivial.

	By Theorem~\ref{thmOGnontrivialdefinable} $K$ admits a non-trivial $\emptyset$-definable valuation inducing the same topology as $\cO_G$ and hence as $v$. 
	As $\cO_v$ and $\cO_G$ are both weakly compatible,  $v$ induces the same topology as $\cO_G$ (see \cite[Theorem~2.3.4]{EnPr2005}). 
\end{proof}

\begin{rem}
	Under the assumptions of Theorem~\ref{thmphenseliandefinable}:
	\begin{enumerate}[(a)]
		\item  There exists a non-trivial $q$-henselian definable valuation inducing the same topology as $v$.
		\item If $q=\ch(K)$, $v$ induces the same topology as $\cO_G$ for $G=K^{(q)}$.
		\item If $q\neq\ch(K)$, $v$ induces the same topology as $\cO_G$ for $G=(K^\times)^{q}$.
	\end{enumerate}
\end{rem}

\begin{proof}
	\begin{enumerate}[(a)]
		\item If  $q=\ch(K)$ or if $q\neq 2$ and $q\neq \ch(\cO_G/\cM_G)$, the definable valuation in Theorem~\ref{thmphenseliandefinable} is $\cO_G$ and, by Proposition~\ref{propOGphenselian} or Proposition~\ref{propOGqhenseliancoarsening}~(a), $\cO_G$ is $q$-henselian. If $q= 2\neq \ch(\cO_G/\cM_G)$ as well by Proposition~\ref{propOGqhenseliancoarsening}~(a), $\cO_G$ is $q$-henselian. Therefore the $q$-henselian definable valuation we obtain by Theorem~\ref{thmJaKoCanqHensDef} or Proposition~\ref{propDefinableIfResFieldEuclidean}, is non-trivial. 
		If $q\neq \ch(\cO_G/\cM_G)$ with the same proof as for the weak case in Proposition~\ref{propDefinableWeakCase}, we obtain a $q$-henselian definable valuation in all cases. 
		\item By Proposition~\ref{propOGphenselian} $\cO_G$ is $q$-henselian. As $K$ is not $q$-closed any two $q$-henselian topologies are dependent and therefore $v$ induces the same topology as $\cO_G$.
		\item By Proposition~\ref{propOGqhenseliancoarsening}~(a) some coarsening $\widetilde{\cO}$ of $\cO_G$ is $q$-henselian. As $K$ is not $q$-closed any two $q$-henselian topologies are dependent and therefore $v$ induces the same topology as $\widetilde{\cO}$ and hence as $\cO_G$.
	\end{enumerate}
\end{proof}

A field with a V-topology is called t-henselian if it is locally equivalent to a field with a topology induced by a henselian valuation. For details see \cite{PrZi1978}. In particular any field with a topology induced by a henselian valuation is t-henselian. The converse is not true. An example was indicated in \cite[Page~338]{PrZi1978}, details are given in \cite[Konstruktion~5.3.5]{Du2010}.

\begin{thm}\label{thmDefValtHenselian}
	Let $\left(K,\,\cT\right)$ be a t-henselian field. 
	There exists a definable valuation on $K$ which induces the topology $\cT$ if and only if $K$ is neither real closed nor separably closed.
\end{thm}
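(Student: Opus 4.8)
The plan is to prove the two directions of the equivalence separately: the easy one by a model-theoretic obstruction, and the substantial one (t-henselian, not real closed, not separably closed $\Rightarrow$ definable valuation inducing $\cT$) by a reduction to Theorem~\ref{thmphenseliandefinable}.

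\emph{Suppose $K$ is real closed or separably closed.} I claim $K$ carries no non-trivial definable valuation; this suffices, since a t-henselian topology is a genuine V-topology (the axioms force it to be non-discrete and Hausdorff) and hence is not the topology of the trivial valuation, so no definable valuation can induce $\cT$. To prove the claim, let $\cO$ be a definable valuation ring on $K$; then $\cO^\times$ is definable, so the value group $K^\times/\cO^\times$ together with the order induced by the image of $\cO\setminus\{0\}$ is interpretable in $K$. If $K$ is separably closed, $\Th(K)$ is stable and therefore cannot interpret an infinite linear order, so this value group is trivial and $\cO=K$. If $K$ is real closed, the maximal ideal $\cM$ of $\cO$ is a definable subgroup of $(K,+)$, and by o-minimality the only definable subgroups of $(K,+)$ are $\{0\}$ and $K$ (a $0$-dimensional one is finite, hence trivial by torsion-freeness; a $1$-dimensional one has finite index in the divisible group $(K,+)$, hence equals $K$); as $\cM\ne K$ we get $\cM=\{0\}$ and again $\cO=K$.

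\emph{Suppose $K$ is neither real closed nor separably closed.} The strategy is to exhibit a prime $q$ — possibly after passing to a suitable finite extension — together with a non-trivial $q$-henselian valuation inducing $\cT$, and then to invoke Theorem~\ref{thmphenseliandefinable}. Since $K$ is not separably closed, and since for each fixed prime $q$ the statement $K\ne K\langle q\rangle$ is elementary (governed by Kummer theory when $\zeta_q\in K$, by Artin--Schreier theory when $q=\ch(K)$, and by the usual description of cyclic degree-$q$ extensions otherwise), one invokes the Prestel--Ziegler theory of t-henselian fields (\cite{PrZi1978}; see also \cite{Du2010}): because $\cT$ is a t-henselian topology — the first-order shadow of the topology of a henselian, hence $q$-henselian, valued field — and $K$ is not $q$-closed for a suitable prime $q$, the topology $\cT$ is in fact induced by a non-trivial $q$-henselian valuation $v$ on $K$. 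The edge cases are absorbed by finite extensions: if $\zeta_q\notin K$ (so $q$ is odd), replace $K$ by $L:=K(\zeta_q)$ carrying the t-henselian topology extending $\cT$; since $[L:K]\mid q-1$ is prime to $q$ one still has $L\ne L\langle q\rangle$, and as $L$ contains a primitive odd root of unity it is not formally real, hence neither real closed nor euclidean. Similarly, if the only available prime is $q=2$ and $K$ is euclidean, replace $K$ by $K(\sqrt{-1})$, which is t-henselian, not separably closed (otherwise $K$ would be real closed by Artin--Schreier) and not euclidean; a usable prime for it then exists, again after adjoining a root of unity if necessary.

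Once one has a field $K'$ (equal to $K$, or a finite extension of $K$ carrying the t-henselian extension of $\cT$), a prime $q$ with $K'$ not $q$-closed and satisfying the hypotheses of Theorem~\ref{thmphenseliandefinable} on $\ch(K')$, on $\zeta_q$, and on non-euclideanity, and a non-trivial $q$-henselian valuation $v'$ on $K'$ inducing the topology, Theorem~\ref{thmphenseliandefinable} yields a non-trivial $\emptyset$-definable valuation on $K'$ inducing the same topology as $v'$. If $K'=K$ this is the desired valuation. If $K'$ is a proper finite extension, Proposition~\ref{propDefValOnSubfields} produces from it a non-trivial definable valuation on $K$, whose induced topology is the restriction to $K$ of the topology on $K'$, namely $\cT$, since restricting a valuation restricts its induced topology. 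The main obstacle is the reduction carried out in the previous paragraph: pinning down the good prime $q$ and, above all, upgrading the t-henselianity of $\cT$ to an actual $q$-henselian valuation inducing it — this is exactly the point where the results on t-henselian fields enter in an essential way, t-henselianity being strictly weaker (and first-order) compared with henselianity.
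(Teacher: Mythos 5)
Your proposal is correct in outline but organizes the hard direction differently from the paper, and it leans on one black-box claim that deserves scrutiny. For the impossibility direction you argue via stability (a separably closed field cannot interpret the infinite linear order $K^\times/\cO^\times$) and via the classification of definable subgroups of $(K,+)$ in o-minimal structures; the paper instead uses boundedness of definable sets in archimedean real closed fields plus completeness of RCF, and the strong order property for the separably closed case. Both arguments are standard and valid. For the main direction, the paper never claims that the t-henselian topology $\cT$ is induced by a ($q$-)henselian valuation on $K$ itself: it passes to a locally equivalent field $\widetilde{K}$ on which the topology genuinely comes from a henselian valuation, runs the whole construction (finite extension $L$, choice of the prime $q$, Theorem~\ref{thmphenseliandefinable}, Proposition~\ref{propDefValOnSubfields}) over $\widetilde{K}$, and only at the very end transfers the resulting definable valuation back to $K$ using elementary equivalence and local equivalence. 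You instead assert up front that $\cT$ (or its extension to a finite extension of $K$) is induced by an actual non-trivial $q$-henselian valuation, citing Prestel--Ziegler. That assertion is true, but it is not in \cite{PrZi1978}, which does not treat $q$-henselianity; it is Koenigsmann's theorem on $q$-henselian fields \cite{Ko1995}, and within this paper it would have to be re-derived from Lemma~\ref{corWeakCompExVTop}, Corollary~\ref{corOGnontrivialiffexiffVtop} and Proposition~\ref{propOGqhenseliancoarsening} by exactly the local-equivalence transfer you are implicitly suppressing. So your route is viable, but the crux of the argument is hidden inside that citation, whereas the paper's arrangement makes the transfer explicit and never needs the existence of a $q$-henselian valuation on $K$ itself. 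Your handling of the side conditions (adjoining $\zeta_q$ for odd $q$, passing to $K(\sqrt{-1})$ and then to an odd prime when $K$ is euclidean, checking that $L\neq L\langle q\rangle$ survives extensions of degree prime to $q$, and restricting back to $K$ via Proposition~\ref{propDefValOnSubfields}) matches the paper's and is fine.
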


\begin{proof}
	In archimedean ordered real closed fields for every definable set either the set itself or the complement is bounded by a natural number, which can not be true for a non-trivial valuation ring.
	As the theory of real closed fields is complete, from this follows already that no real closed field  admits a definable valuation.
	If a field admits a non-trivial definable valuation we can construct a formula with the strong order property. Hence the field is not simple and therefore in particular not separably closed. For more details see \cite[6.58-6.61]{Du2015} and \cite[Section~8.2]{TeZi2012}.
	
	If $\left(K,\,\cT\right)$ is a not real closed and not separably closed t-henselian field, it  is locally equivalent, and hence elementary equivalent, to a field  $\widetilde{K}$ with a topology induced by a henselian valuation $v$.

	$\widetilde{K}$ is as well not real closed and not separably closed, hence there exists a prime $q$ and a field $L$ such that
	$L/\widetilde{K}$ is a finite separable extension,
	$L\neq L\langle q\rangle$,
	and if $q\neq \ch(L)$ then $\zeta_q\in L$.
	Let $w$ be the unique extension of $v$ to $L$. $w$ is henselian and hence $q$-henselian.
	
	If $q=\ch(L)$ or  $q\neq \ch(L)$ and $q \neq 2$ or $L$ is not euclidean, then by Theorem~\ref{thmphenseliandefinable} there exists a definable valuation $\widetilde{w}$ on $L$ which induces the same topology as $w$.
	
	Now assume $q=2\neq \ch(L)$ and  $L$ is euclidean.	
	As $L$ is not real closed and euclidean,  there exists a polynomial $f\in L[X]$ of odd degree such that $f$ has no roots in $L$ (see for example \cite[Theorem~1.2.10 (Artin,Schreier)]{PrDe2001}).  Without loss of generality let $f$ be irreducible. 
	Let $x\in L^{\textrm{sep}}$ be a root of $f$. We have $[L(x):L]=\textrm{deg}(f)$ positive and odd. Hence there exists $\widetilde{q}\neq 2$ prime such that $\widetilde{q}\,|[L(x):L]$. Now we can find a field extension $\widetilde{L}$ such that we can prove as above for $\widetilde{L}$ and $\widetilde{q}$ that there is a definable valuation $\widetilde{w}$ on $\widetilde{L}$.

	By Proposition~\ref{propDefValOnSubfields} $\widetilde{w}|_K$ is a definable valuation on $\widetilde{K}$. As $\widetilde{w}$ induces the same topology on $L$  as $w$  it is easy to see that $\widetilde{w}|_{\widetilde{K}}$ and $w|_{\widetilde{K}}$  induce the same topology on $\widetilde{K}$.

	As  $K$ and $\widetilde{K}$ are elementary equivalent, there exists a definable valuation $v_0$ on $K$. As $\left(\widetilde{K},\, \cT_v\right)$  and $\left(K,\,\cT\right)$ are locally equivalent, follows $\cT_{v_0}=\cT$. 
\end{proof}

\end{document}